\documentclass[]{amsart}

\usepackage{amscd,amsthm,amssymb,amsfonts,amsmath,euscript}

%\usepackage[mathscr]{eucal}
% either use \mathscr (no euscript package), or use \EuScript (no eucal
% package), they are the same font

% \setlength{\topmargin}{-0.4in}
% \setlength{\headheight}{8pt}
% \setlength{\textheight}{9in}
% \setlength{\oddsidemargin}{-0.15in}
% \setlength{\evensidemargin}{-0.15in}
% \setlength{\textwidth}{6.6in}

% newcommands for thesis

% theorems, lemmas,..
\theoremstyle{plain}
\newtheorem{thm}{Theorem}[section]
\newtheorem{lemma}[thm]{Lemma}
\newtheorem{prop}[thm]{Proposition}
\newtheorem{cor}[thm]{Corollary}

\theoremstyle{definition}
\newtheorem{defn}[thm]{Definition}
\newtheorem{eg}[thm]{Example}

\theoremstyle{remark}
\newtheorem{remark}[thm]{Remark}
\newtheorem*{thank}{{\bf Acknowledgments}}
%for temporary use
\newcommand{\nc}{\newcommand}

%\numberwithin{equation}{section}
%\numberwithin{section}{chapter}

\def\makeop#1{\expandafter\def\csname#1\endcsname
  {\mathop{\rm #1}\nolimits}\ignorespaces}
\makeop{Hom}   \makeop{End}   \makeop{Aut}   \makeop{Isom}  \makeop{Pic} 
\makeop{Gal}   \makeop{ord}   \makeop{Char}  \makeop{Div}   \makeop{Lie} 
\makeop{PGL}   \makeop{Corr}  \makeop{PSL}   \makeop{sgn}   \makeop{Spf}
\makeop{Spec}  \makeop{Tr}    \makeop{Nr}    \makeop{Fr}    \makeop{disc}
\makeop{Proj}  \makeop{supp}  \makeop{ker}   \makeop{im}    \makeop{dom}
\makeop{coker} \makeop{Stab}  \makeop{SO}    \makeop{SL}    \makeop{SL}
\makeop{Cl}    \makeop{cond}  \makeop{Br}    \makeop{inv}   \makeop{rank}
\makeop{id}    \makeop{Fil}   \makeop{Frac}  \makeop{GL}    \makeop{SU}
\makeop{Trd}   \makeop{Sp}    \makeop{Tr}    \makeop{Trd}   \makeop{diag}
\makeop{Res}   \makeop{ind}   \makeop{depth} \makeop{Tr}    \makeop{st}
\makeop{Ad}    \makeop{Int}   \makeop{tr}    \makeop{Sym}   \makeop{can}
\makeop{length}\makeop{SO}    \makeop{torsion} \makeop{GSp} \makeop{Ker}
\makeop{Adm}
\def\makebb#1{\expandafter\def
  \csname bb#1\endcsname{{\mathbb{#1}}}\ignorespaces}
\def\makebf#1{\expandafter\def\csname bf#1\endcsname{{\bf
      #1}}\ignorespaces} 
\def\makegr#1{\expandafter\def
  \csname gr#1\endcsname{{\mathfrak{#1}}}\ignorespaces}
\def\makescr#1{\expandafter\def
  \csname scr#1\endcsname{{\EuScript{#1}}}\ignorespaces}
\def\makecal#1{\expandafter\def\csname cal#1\endcsname{{\mathcal
      #1}}\ignorespaces} 
% \cal is used in article, \mathcal is used in amsart

\def\doLetters#1{#1A #1B #1C #1D #1E #1F #1G #1H #1I #1J #1K #1L #1M
                 #1N #1O #1P #1Q #1R #1S #1T #1U #1V #1W #1X #1Y #1Z}
\def\doletters#1{#1a #1b #1c #1d #1e #1f #1g #1h #1i #1j #1k #1l #1m
                 #1n #1o #1p #1q #1r #1s #1t #1u #1v #1w #1x #1y #1z}
\doLetters\makebb   \doLetters\makecal  \doLetters\makebf
\doLetters\makescr 
\doletters\makebf   \doLetters\makegr   \doletters\makegr
     \def\qed{\qedmark\medbreak}%
\def\qedmark{{\enspace\vrule height 6pt width 5pt depth 1.5pt}}%
    \def\setminus{\smallsetminus}

\normalsize

\makeop{Bl}

\def\Spec{{\rm Spec}\,}

%basic notation
\newcommand{\Z}{\mathbb Z}
\newcommand{\Q}{\mathbb Q}
\newcommand{\R}{\mathbb R}
\newcommand{\C}{\mathbb C}
    % pro algebraic torus
  % \H already defined
    % for adele
 % for sheaves

% \cal, \frak, \mathbf.. check Math Sci.
% \mathcal, \mathfrak, \EuScript (euscript), \mathbb

% convenient

% better notation

   %\< is not defined yet.
 %\> is already defined.

  %\11 can't be used

\nc{\embed}{\hookrightarrow}
% moduli notation

%useful operators

% never number appeared in the newcommand name!

% long words
\newcommand{\ch}{characteristic }

\newcommand{\dieu}{Dieudonn\'{e} }

\nc{\ol}{\overline}
\nc{\wt}{\widetilde}
\nc{\wh}{\widehat}
\nc{\opp}{\mathrm{opp}}

\def\Mat{{\rm Mat}}

\makeop{Ram}
\makeop{Rep}

% the effect of \makeop{Rep} and \mathrm{Rep} is the same.

\begin{document}
\renewcommand{\thefootnote}{\fnsymbol{footnote}}
\setcounter{footnote}{-1}
\numberwithin{equation}{section}
%\numberwithin{section}{chapter}

%\usepackage[notref,notcite]{showkeys}

\title{On the existence of maximal orders}
\author{Chia-Fu Yu}
\address{
Institute of Mathematics, Academia Sinica \\
6th Floor, Astronomy Mathematics Building \\
No. 1, Roosevelt Rd. Sec. 4 \\ 
Taipei, Taiwan, 10617 and NCTS(Taipei Office)} 
\email{chiafu@math.sinica.edu.tw}

\date{\today}
\subjclass[2000]{11S45, 16H15, 11G10}
\keywords{semi-simple algebras, Nagata rings, maximal orders, 
abelian varieties}

\begin{abstract}
We generalize the existence of maximal orders in a semi-simple algebra
for general ground rings. We also improve several statements in Chapter 5
and 6 of Reiner's book \cite{reiner:mo} 
concerning separable algebras by removing the
separability condition, provided the ground ring is only assumed to be
Japanese, a very mild condition. 
Finally, we show the existence of maximal orders
as endomorphism rings of abelian varieties in each isogeny class.
% of abelian varieties.  
\end{abstract} 

\maketitle

%\tableofcontents   % Table of Contents

\def\char{{\rm char\,}}

\section{Introduction}
\label{sec:01}

Maximal orders are basic objects in the integral theory of
semi-simple algebras. As a generalization of the rings of integers in
number fields, they are also main interests in number theory. 
A classical result states that the existence of maximal orders, not
just for the ring of integers in a number field, may hold
in a quite general setting, which we describe now (Theorem~\ref{11}). 

Let $R$ be a Noetherian integral domain with quotient field $K$.  
We only consider $K$-algebras which are finite-dimensional. 
A (finite-dimensional) $K$-algebra $A$ is said to be {\it separable 
  over $K$} if
it is semi-simple and the center $Z(A)$ of $A$ is a separable
(commutative) semi-simple $K$-algebra, that is, $Z(A)$ is a finite
product of finite separable field extensions of $K$. 
Clearly any central simple
$K$-algebra is separable. 
For a $K$-algebra $A$, an {\it $R$-order} $\Lambda$ in
$A$ is a finite $R$-subring of $A$ which spans $A$ over $K$. An
$R$-order $\Lambda$ of $A$ is said to be {\it maximal} if there is no
$R$-order 
$\Lambda'$ of $A$ which strictly contains $\Lambda$. 
The following is a classical result about the existence of maximal
orders; see \cite[Corollary 10.4 and Theorem 10.5 (iv),
p.~127--8]{reiner:mo}. 

\begin{thm}\label{11}
  Let $R$ and $K$ be as above, and $A$ a
  semi-simple algebra over $K$. Then there exists a maximal $R$-order
  of $A$ provided one of the following conditions hold:
  \begin{enumerate}
  \item $R$ is normal and $A$ is separable over $K$. 
  \item $R$ is a complete discrete valuation ring.
  \end{enumerate}
\end{thm}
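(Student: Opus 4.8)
The plan is to reduce the general case to a local computation. First I would pass to the integral closure: let $\Lambda_0$ be any $R$-order in $A$ (one exists, e.g.\ the $R$-span of a $K$-basis of $A$ consisting of elements integral over $R$, after clearing denominators). The key is to show that every $R$-order is contained in a maximal one, and for this it suffices to bound orders from above. In case (1), where $R$ is normal (integrally closed) and $A$ is separable over $K$, the reduced trace form $(x,y)\mapsto \Trd_{A/K}(xy)$ is non-degenerate precisely because separability guarantees the trace pairing is non-singular on each simple factor. Given an order $\Lambda$ containing $\Lambda_0$, every element of $\Lambda$ is integral over $R$, hence $\Trd_{A/K}(\Lambda\cdot\Lambda_0)\subseteq R$ since $R$ is integrally closed and reduced traces of integral elements are integral. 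Therefore $\Lambda$ sits inside the dual lattice $\Lambda_0^{\#}=\{x\in A:\Trd_{A/K}(x\Lambda_0)\subseteq R\}$, which is a finite $R$-module because the trace form is non-degenerate and $R$ is Noetherian. Thus the set of orders containing $\Lambda_0$ is a set of $R$-submodules of the Noetherian module $\Lambda_0^{\#}$, so it has a maximal element by the ascending chain condition; that maximal element is a maximal $R$-order.

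In case (2), where $R$ is a complete discrete valuation ring, separability is no longer available, so the trace-form argument fails. Instead I would argue directly with the chain condition using the valuation. Again start with an order $\Lambda_0$. For any order $\Lambda\supseteq\Lambda_0$, both are free $R$-modules of rank $n=\dim_K A$ (finitely generated torsion-free over a DVR, hence free), and $\Lambda_0\subseteq\Lambda$ forces the index, measured by $\mathrm{ord}_R(\det)$ of the inclusion matrix, to be a non-negative integer that strictly decreases along a strict ascending chain. Hence any ascending chain of orders containing $\Lambda_0$ stabilizes, so $\Lambda_0$ lies in a maximal order. This part does not even use completeness for mere existence; completeness is what gives the finer structure theory (e.g.\ uniqueness up to conjugacy fails but the local-global description holds), but for the bare existence statement the DVR hypothesis already suffices via the length/valuation bound.

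The main obstacle is case (1): one must be careful that the dual lattice $\Lambda_0^{\#}$ is genuinely a finitely generated $R$-module, which is exactly where normality of $R$ (so that integral reduced traces land in $R$) and separability of $A$ (so that the trace pairing is perfect over $K$ and $\Lambda_0^{\#}$ is a lattice, not something larger) are both essential. Once $\Lambda_0^{\#}$ is known to be a lattice sandwiching all orders above $\Lambda_0$, the Noetherian hypothesis on $R$ closes the argument immediately. A secondary subtlety is verifying at the outset that $R$-orders exist at all inside $A$; this is routine — pick a $K$-basis $a_1,\dots,a_n$ of $A$, each $a_i$ satisfies a monic polynomial over $K$, scale to make it integral over $R$, and then enlarge the $R$-span to a subring by adjoining finitely many products, which remains finite over the Noetherian ring $R$.
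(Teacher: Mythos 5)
Your case (1) is correct and is essentially the standard argument (the one behind Reiner's Corollary 10.4, which is what the paper cites): integrality of reduced characteristic polynomials over the normal ring $R$ gives $\Trd_{A/K}(\Lambda\cdot\Lambda_0)\subseteq R$ for any order $\Lambda\supseteq\Lambda_0$, separability makes the trace pairing nondegenerate so $\Lambda_0^{\#}$ is a full lattice, and ACC over the Noetherian ring $R$ finishes it.

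Case (2), however, has a genuine gap. Your index argument is backwards: if $\Lambda_0\subseteq\Lambda\subsetneq\Lambda'$, then $\ord_R\det$ of the inclusion of $\Lambda_0$ into the growing order \emph{increases} strictly along the chain, and nothing you have said bounds it from above; the whole difficulty is precisely to produce a single lattice containing every order above $\Lambda_0$, which in case (1) was $\Lambda_0^{\#}$ and which your valuation count does not supply. Your parenthetical claim that completeness is not even needed is false, and the paper itself is built around the counterexample: by Theorem~\ref{12}(2), a maximal order in a finite field extension $L\supset K$ must be the integral closure $R_L$, and Example~\ref{non-excellent} exhibits a (non-complete) discrete valuation ring $R$ in characteristic $p$ and a purely inseparable extension $L=K(c)$ for which $R_L$ is not finite over $R$; there the chain $R[c_0]\subsetneq R[c_1]\subsetneq\cdots$ is a strictly ascending chain of orders that never stabilizes, so no maximal order containing $R[c]$ exists. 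The correct proof of case (2) uses completeness in an essential way: for a complete discrete valuation ring the integral closure $R_i'$ of $R$ in each factor $Z_i$ of the center $Z(A)=\prod Z_i$ is a complete discrete valuation ring, finite and free over $R$, even when $Z_i/K$ is inseparable (i.e.\ complete DVRs are Japanese). One then replaces $\Lambda_0$ by the order $R'\Lambda_0$ with $R'=\prod R_i'$, decomposes along the central idempotents, and applies your case (1) argument to each central simple (hence separable) factor $A_i$ over the normal Noetherian ring $R_i'$ — this is exactly the reduction the paper carries out in Section~\ref{sec:03}, with ``Japanese'' replacing ``complete discrete valuation ring''.
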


When $\char K=0$ or even $K$ is a number field, Theorem~\ref{11} provides
most of the situations we may encounter. However, when $K$ is a global
function field, the assumption of the separability of $A$ 
seems to be superfluous. We would like to find a necessary and
sufficient condition for the ground ring $R$ so that maximal orders in
any semi-simple $K$ algebra exists. 
%and it would bring some restrictions,
%that we do not wish to have, to results. 
In this Note we prove the following result, 
which removes the separability assumption in Theorem~\ref{11} (1) 
for rather general ground rings in positive characteristics.

\begin{thm}\label{12}
  Let $R$ be a Noetherian integral domain and $K$ be its quotient field. 
\begin{enumerate}
  \item Assume that $R$ is a Japanese ring. Then any $R$-order of a
    semi-simple $K$-algebra $A$ is contained in a maximal $R$-order. In
    particular, every semi-simple $K$-algebra contains a maximal
    $R$-order. 
  \item Conversely, if 
  every semi-simple $K$-algebra $A$ contains a maximal 
  $R$-order, then $R$ is a Japanese ring.
\end{enumerate}
\end{thm}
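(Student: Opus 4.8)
The plan is to prove the two implications by rather different means. For part (1), I would reduce to the local situation and then to the key new ingredient: handling inseparable centers. First recall that an $R$-order $\Lambda$ in $A$ is contained in a maximal one as soon as the set of $R$-orders containing $\Lambda$ has a maximal element, and by a standard Zorn's lemma argument this holds provided every ascending chain of $R$-orders stabilizes; since $R$ is Noetherian and all orders are finite $R$-modules lying inside the fixed finite-dimensional $K$-vector space $A$, one reduces to showing that the orders containing $\Lambda$ have uniformly bounded "size", e.g.\ are all contained in a single finitely generated $R$-submodule of $A$. The classical route (Reiner, Chapter 10) does this using the reduced trace pairing, which is nondegenerate exactly when $A$ is separable. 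The heart of part (1) is therefore to replace the reduced trace by a substitute that works for an arbitrary semi-simple $A$ over an arbitrary field $K$: one can still find \emph{some} nondegenerate associative $K$-bilinear form on $A$ for which a given order is integral, and then the dual lattice with respect to that form gives the desired bound. Concretely, I expect the argument to go: (i) pass to the normalization issue via the Japanese hypothesis, which guarantees that the integral closure $S$ of $R$ in each factor field of $Z(A)$ is a finite $R$-module, so $S$ is itself a (commutative) $R$-order and is Noetherian; (ii) over each such $S$, the center is a field and $A$ becomes a semi-simple algebra whose center is a field, so $A$ is a finite product of central simple algebras over field extensions of $\mathrm{Frac}(S)$, and for central simple algebras the reduced trace form is nondegenerate even in the inseparable case; (iii) assemble an $R$-order containing $\Lambda$ and $S$, observe that every $R$-order containing it is sandwiched between a lattice and its dual with respect to the (now nondegenerate) form, hence the chain condition applies.

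For the bound in step (iii) the precise statement I would isolate is: if $\Gamma$ is an $R$-order in $A$ containing the element $1$ and $T \colon A \times A \to K$ is a nondegenerate associative symmetric $K$-bilinear form with $T(\Gamma,\Gamma) \subset R$, then every $R$-order $\Gamma'$ with $\Gamma \subset \Gamma'$ satisfies $\Gamma' \subset \Gamma^{\vee} := \{x \in A : T(x,\Gamma) \subset R\}$, because $T(\Gamma',\Gamma') \subset R$ already (an order is by definition a ring, so products of integral elements are integral, and the trace form takes integral values on an order; here the Japanese hypothesis after base change to $S$ is what makes "reduced trace is $S$-valued on an $S$-order" work). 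Since $\Gamma^{\vee}$ is a finite $R$-module (as $\Gamma$ is and $R$ is Noetherian and $T$ is nondegenerate), the family of $R$-orders containing $\Gamma$ lies inside one finite $R$-module, and Noetherianity finishes it. The main obstacle, and the place where the Japanese condition is genuinely used, is the passage from $R$ to the integral closures $S$ of $R$ in the (possibly inseparable) residue fields of $Z(A)$: without finiteness of $S$ over $R$ one cannot produce an honest $R$-order containing $S$, and indeed this is exactly what part (2) will show is necessary.

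For part (2), I would argue contrapositively: if $R$ is not Japanese, there is a finite field extension $L/K$ such that the integral closure $S$ of $R$ in $L$ is not a finite $R$-module. Take $A = L$, viewed as a commutative semi-simple (in fact field) $K$-algebra. Then the $R$-orders in $A$ are precisely the subrings of $L$ that are finite $R$-modules and span $L$ over $K$; any such order consists of elements integral over $R$, hence is contained in $S$. A maximal $R$-order in $L$ would be a finite $R$-module $\Lambda$ with $R \subset \Lambda \subset S$ maximal among finite $R$-modules in $L$ that are subrings spanning $L$. But because $S$ is \emph{not} finite over $R$, one can always enlarge: pick $x \in S$ not in $\Lambda$, then $\Lambda[x]$ is again an order (finite over $R$ since $x$ is integral and $\Lambda$ is finite), strictly larger, contradiction. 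Hence $A = L$ has no maximal $R$-order, proving the converse. The only mild care needed here is to check that a suitable $L$ exists with $S$ non-finite \emph{and} that one may take it separable or not as convenient — but since "Japanese" ("N-2" in Matsumura's terminology) already asks finiteness of the integral closure in \emph{every} finite extension, the failure gives us such an $L$ directly, and the argument above does not care whether $L/K$ is separable.
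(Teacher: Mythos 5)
Your proposal is correct and follows essentially the same route as the paper: for part (1) you use the Japanese hypothesis to pass to the integral closure $R'$ of $R$ in the center $Z(A)$, reducing to central simple algebras over the Noetherian normal domains $R'_i$ (where the paper simply cites the classical normal/separable existence theorem, Theorem~\ref{11}(1), while you unwind its reduced-trace dual-lattice proof), and your part (2) is the same observation as the paper's, namely that a maximal $R$-order in a finite field extension $L/K$ must be the full integral closure of $R$ in $L$ and hence a finite $R$-module. The only point to tighten is that your ``precise statement'' in step (iii) should be formulated for the reduced-trace-type pairing over the normal ring $R'$ (or composed with a functional carrying $R'$ into $R$), not for an arbitrary nondegenerate associative form with $T(\Gamma,\Gamma)\subset R$, since it is exactly the trace form together with normality of $R'$ that guarantees integrality of values on every larger order.
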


We shall recall the definition of Japanese and Nagata rings as well as
some of their properties and the relationship to 
(quasi-)excellent rings. 
%Theorem~\ref{12} is the
% strongest version, namely if every semi-simple
%$K$-algebra contains a maximal $R$-order, then $R$ must be a Nagata
%ring.
Nagata domains are special cases of Japanese rings. 
Examples of Nagata rings include commutative rings of finite type over
$\Z$
and their localizations, commutative rings of finite type over
any field $k$ and their localizations, and Noetherian complete 
semi-local rings. 
Noetherian normal domains with quotient field of \ch zero are Japanese
rings. 

In the second part of this Note we give a description of maximal
orders in a semi-simple $K$-algebra, where the ground ring $R$ is
either a Noetherian Japanese ring or an excellent ring. We reduce the
description to the case when $R$ is a Noetherian normal domain, whose
description becomes well-known. For the convenience of the reader, 
we also include  
the expository account of this important theory. 
Our reference is the well-written book by I.~Reiner \cite{reiner:mo}. 

Note that the results of this Note generalize all statements
concerning separable $K$-algebras $A$ in Chapters 5 and 6 of Reiner's
book \cite{reiner:mo}. We remove the separability condition for $A$ 
provided the Dedekind domain $R$ is assumed to be either excellent or
Japanese; see the reduction step in Section~\ref{sec:03} or
Subsection~\ref{sec:41}. For number theorists, this assumption is
harmless. 

Our motivation of entering the integral theory of semi-simple algebras
is due to the basic fact that the endomorphism ring of an abelian
variety is an order of a semi-simple $\Q$-algebra. An abelian variety
whose endomorphism ring is maximal should be distinguished from others
in its isogeny class. The last part of this Note shows that in any isogeny
class of abelian varieties there is an abelian variety whose
endomorphism ring is maximal, 
a result about the existence of maximal orders. 
More precisely, we show the following result.

% More precisely, let $A_0$ be an abelian variety over any field $k$ with 
% $\End_k(A_0)=\End_{\bar k}(A_0)$. Write $\calO$ for the endomorphism
% ring $\End_k(A_0)$ of $A_0$ and $E$ for the endomorphism algebra 
% $\End^0_k(A_0):=\End_k(A_0)\otimes_\Z \Q$ of $A_0$. We show

\begin{thm}\label{ab}
  Let $A_0$ be an abelian variety over an arbitrary field $k$. Let 
  $\calO' \subset \End^0_k(A_0)=\End_k(A_0)\otimes_\Z \Q$ be a maximal
  order containing $\End_k(A_0)$. Then there exist an abelian variety
  $A'$ over $k$ and an isogeny $\varphi:A_0 \to A'$ over $k$ such that
  with the identification $\End^0_k(A_0)=\End^0_k(A')$ by $\varphi$
  one has $\End_{k}(A')=\calO'$. Moreover, the
  isogeny $\varphi$ can be chosen to be minimal with respect to
  $\calO'$ in the following sense: if $(A_1, \varphi_1)$ is
  another pair such that $\End_{k}(A_1)=\calO'$, then there is an 
  (necessarily unique) isogeny $ \alpha: A' \to A_1$
  such that $\varphi_{1}=\alpha\circ \varphi$. 
%  Then there is an isogeny $\varphi:A_0\
%  (which is necessarily finite
%  as a $\Z$-module). Let $\calO'$ be a $\Z$-order in $\calO \otimes_\Z
%  \Q$ containing $\calO$.  
%   Let $\calO'$ be a maximal order in $E$ containing $\calO$. 
%   Then there exist a finite purely inseparable extension $k'$ of $k$, 
%   an abelian variety $A'$ over $k'$, and an isogeny $\varphi:A_0\otimes
%   k'\to A'$ over $k'$ such that $\End_{k'}(A')=\calO'$. Moreover, the
%   isogeny $\varphi$ can be chosen to be minimal with respect to
%   $\calO'$ in the following sense: if $(k_1, A_1, \varphi_1)$ is
%   another triple such that $\End_{k_1}(A_1)=\calO'$, then there are a 
%   finite purely inseparable extension $k''$ of $k$ containing $k'$ and $k_1$
%   and a unique isogeny 
%  \[ \alpha: A'\otimes_{k'} k''\to A_1\otimes_{k_1} k''  \]
%  such that $\varphi_{1, k''}=\alpha\circ \varphi_{k''}$. 
\end{thm}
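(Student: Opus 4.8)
The plan is to construct $A'$ as a quotient of $A_0$ by a suitable finite subgroup scheme, determined by the order $\calO'$. Here is the key observation: since $\End_k(A_0)$ spans $\End^0_k(A_0)$ over $\Q$, there is a nonzero integer $N$ with $N\calO' \subset \End_k(A_0)$; more precisely, $\calO'$ is a finitely generated $\Z$-module, so we may pick $N$ so that $N\calO'\subset\End_k(A_0)$. Consider the finitely many generators $f_1,\dots,f_r$ of $\calO'$ as a $\Z$-module, write $f_i = g_i/N$ with $g_i \in \End_k(A_0)$, and let $H = \sum_i \ker(g_i) \cap A_0[N]$, a finite subgroup scheme of $A_0$ (here the sum is the subgroup scheme generated by the images; equivalently $H$ is the largest subgroup scheme on which all the $g_i$ agree with the zero map after factoring through $A_0[N]$). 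Actually it is cleaner to proceed as follows: set $A' = A_0/H$ where $H$ is chosen as the \emph{smallest} subgroup scheme of $A_0[N]$ such that each $N f_i = g_i$, viewed as an endomorphism of $A_0$, kills $H$ is not quite the right condition — instead, we want $H$ such that $f_i$ descends to an endomorphism of $A_0/H$ for each $i$.

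First I would make this precise via a standard lemma: for an isogeny $\varphi : A_0 \to A'$ with kernel $H \subset A_0[N]$, and $f \in \End^0_k(A_0)$ with $Nf \in \End_k(A_0)$, the element $f$ lies in $\End_k(A')$ (under the identification via $\varphi$) if and only if $(Nf)(H) \subset H$ \emph{and} $f$ itself, as a rational endomorphism, has the property that $\varphi \circ f \circ \varphi^{-1}$ is regular — concretely, $f \in \End_k(A')$ iff $(Nf)$ maps $H$ into $\ker(\varphi\circ[N]) = $ the preimage $\varphi^{-1}(0)$'s relevant piece; the correct clean criterion is $\widehat{\varphi}$-theoretic: $f \in \End_k(A')$ $\iff$ $Nf (H) \subseteq H$ where one also uses that $A_0[N]$ is preserved. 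Rather than belabor this, I would cite the well-known dictionary (e.g.\ as in Mumford or Waterhouse): isogenies out of $A_0$ with kernel in $A_0[N]$ correspond to subgroup schemes of $A_0[N]$, and the condition that a fixed $f\in\End^0$ becomes integral on the quotient is a closed condition on $H$ cut out inside the finite scheme of such subgroups. The subgroup schemes $H$ of $A_0[N]$ with the property ``$f_i$ descends for all $i$'' are closed under intersection (the intersection of two works), so there is a unique minimal such $H$; call the quotient $A'$ and $\varphi : A_0 \to A'$ the projection.

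Next I would verify $\End_k(A') = \calO'$. The inclusion $\calO' \subseteq \End_k(A')$ holds by construction (all $f_i$ descend, hence so does the ring they generate, and a descended endomorphism is automatically $k$-rational since $\varphi$ is). For the reverse inclusion $\End_k(A') \subseteq \calO'$: any $h \in \End_k(A')$ pulls back to $\varphi^{-1} h \varphi \in \End^0_k(A_0)$, and we must see this lands in $\calO'$. Here is where the \emph{maximality} of $\calO'$ enters: $\End_k(A')$, transported to $\End^0_k(A_0)$, is an $R$-order (with $R=\Z$) of $\End^0_k(A_0)$ containing $\calO'$ — it contains $\calO'$ by the previous inclusion, and it is an order because $\End_k(A')$ is a finite $\Z$-algebra spanning $\End^0_k(A')=\End^0_k(A_0)$. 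Since $\calO'$ is maximal, $\End_k(A')=\calO'$. This is the crux and it is where Theorem~\ref{12} (more precisely, the mere \emph{existence}/maximality formalism for orders over $\Z$) is invoked, though over $\Z$ this is classical.

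Finally, the minimality (universal property) of $\varphi$: given $(A_1,\varphi_1)$ with $\End_k(A_1)=\calO'$, the composite $\varphi_1$ has kernel $H_1 \subset A_0[N_1]$ for some $N_1$; enlarging $N$ if necessary we may assume $N=N_1$ and compare inside $A_0[N]$. Since $\calO' = \End_k(A_1)$ descends along $\varphi_1$, the subgroup $H_1$ satisfies the defining property of $H$, so by minimality $H \subseteq H_1$, giving a factorization $\varphi_1 = \alpha \circ \varphi$ with $\alpha : A' \to A_1$ an isogeny; uniqueness of $\alpha$ is automatic since $\varphi$ is an epimorphism of group schemes (faithfully flat). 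The main obstacle I anticipate is the bookkeeping in the first step — setting up the correct equivalence ``$f$ descends to an integral endomorphism of $A_0/H$'' $\iff$ ``[explicit condition on $H$]'' and checking it is stable under intersection of subgroup schemes, especially in the presence of infinitesimal group schemes in positive characteristic (where one cannot argue with points alone and must work scheme-theoretically, e.g.\ via the anti-equivalence with finite commutative group schemes or via Dieudonné theory locally). Once that dictionary is in place the rest is formal, driven entirely by maximality of $\calO'$.
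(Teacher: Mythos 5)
Your route --- pick $N$ with $N\calO'\subseteq\End_k(A_0)$, quotient $A_0$ by a minimal subgroup scheme $H\subseteq A_0[N]$ for which $\calO'$ descends, and then use maximality of $\calO'$ to force $\End_k(A')=\calO'$ --- is a genuinely different construction from the paper's, which takes $A'=\calO'\otimes_{\calO}A_0$ by Serre's tensor construction, computes its Tate and Dieudonn\'e modules, and descends the comparison isogeny from $k^{\mathrm{pf}}$ by Chow's lemma. The maximality step at the end is fine and is exactly how the paper passes from its Theorem~6.5 to Theorem~1.3. But as written there are genuine gaps. First, the descent criterion on which the whole construction rests is never pinned down, and the candidate formulations you give are incorrect: for $g=Nf\in\End_k(A_0)$ and $H\subseteq A_0[N]$, the condition ``$g(H)\subseteq H$'' is not the right one. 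The correct statement is that $f$ becomes integral on $A'=A_0/H$ if and only if $g$ maps $\ker(\varphi\circ[N])=[N]^{-1}(H)$ into $H$; then $\varphi\circ g$ factors through the fppf quotient $\varphi\circ[N]$ and the induced endomorphism is $\varphi f\varphi^{-1}$, and conversely. Likewise, closure of the admissible $H$'s under intersection is asserted but not proved; it is true (e.g.\ map $A_0$ by $\psi=(\varphi_1,\varphi_2)$ to $(A_0/H_1)\times(A_0/H_2)$, whose image is $A_0/(H_1\cap H_2)$, and note $(f_1'\times f_2')\circ\psi\circ[N]=\psi\circ g$, so $f_1'\times f_2'$ preserves that image), but some scheme-theoretic argument of this kind must be supplied, since $H$ may be infinitesimal and one cannot argue with points.

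The more serious gap is in the universal property. Your $H$ is minimal only among admissible subgroup schemes \emph{contained in} $A_0[N]$, whereas $H_1=\ker\varphi_1$ need not be killed by $N$. ``Enlarging $N$'' does not repair this as stated: the family over which $H$ was minimal changes with $N$, and a priori the minimal admissible subgroup inside $A_0[NN_1]$ could be strictly smaller than $H$, so minimality gives no comparison between $H$ and $H_1$. You must show that the minimal admissible subgroup is independent of the auxiliary $N$, i.e.\ that $H$ is minimal among \emph{all} finite admissible subgroup schemes. This is true, and the cleanest repair essentially identifies $H$: by the correct descent criterion, any admissible $H_1$ satisfies $g_i(A_0[N])\subseteq H_1$ for every $i$ (because $\varphi_1 g_i=f_i'\varphi_1[N]$ kills $A_0[N]$), and one checks that $H^{\ast}:=\sum_i g_i(A_0[N])$ is itself admissible (using that $\calO'$ is closed under multiplication), so $H=H^{\ast}$ is the absolute minimum. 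At that point you have in effect reconstructed the kernel of the paper's canonical isogeny $A_0\to\calO'\otimes_{\calO}A_0$. If these steps are carried out, your argument has the advantage of staying over $k$ throughout, avoiding the paper's passage to the perfect closure, Dieudonn\'e modules and Chow's lemma; but the bookkeeping you defer is precisely where that machinery (or a careful scheme-theoretic substitute such as the one sketched above) is needed, so as it stands the proof is not complete.
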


A local version of Theorem~\ref{ab} (where $\calO'$ is a maximal
order of $\End(A_0)\otimes \Q_p$ containing $\End(A_0)\otimes \Z_p$) 
when $k$ is an algebraically
closed field of \ch $p>0$ is used in the proof of the reduction step 
\cite[Lemma 2.4]{yu:endo}. \\ 

This Note is organized as follows. In Section~\ref{sec:02} we review
the properties of Japanese, Nagata, and excellent rings, and their 
relationship as well. We also discuss the relationship between the 
properties (regularity, normality and some others) of a local ring
and its completion 
% by the properties of regularity, normality and some others. 
In Section~\ref{sec:03} we show the existence of maximal orders
in a semi-simple algebra with Japanese ground rings. In
Section~\ref{sec:04} we attempted to describe maximal orders in these
semi-simple algebras $A$ and show that the description can be reduced to
the case where the ground ring is a complete discrete valuation ring
and $A$ may be assumed to be central simple. 
We collect the description of maximal orders in a central simple
algebra over a discrete valuation ring and a Dedekind domain,
following Reiner \cite{reiner:mo}. 
In the last section we give the proof of Theorem~\ref{ab}.

\section{Nagata and excellent rings}
\label{sec:02}

In this section we recall the definition of Nagata rings and excellent
rings, as well as their properties. Our references are Matsumura
\cite{matsumura:ca80}, and EGA IV \cite{ega4:20,ega4:24}. 

Notations here are independent of Section 1, as we prefer to follow
closely Matsumura \cite{matsumura:ca80} and EGA IV. 
All rings and algebras in this section are commutative with identity.

% For any ring $A$, denote by $A'$ the integral closure of
% $A$ in its totally quotient ring. 

\subsection{Nagata rings}
\label{sec:21}

\begin{defn}\ Let $A$ be an integral domain with quotient field $K$.
\begin{enumerate}
  \item  We say that {\it $A$ is N-1} if the integral closure $A'$ of $A$ in
    its quotient field $K$ is a finite $A$-module. 
  \item We say that {\it $A$ is N-2} if for any finite extension field $L$
    over $K$, the integral closure $A_L$ of $A$ in $L$ is a finite
    $A$-module.   
\end{enumerate}
\end{defn}

If $A$ is N-1 (resp. N-2), then so is any localization of $A$. 
If $A$ is a Noetherian domain of \ch zero, then $A$ is N-2 if and only
if $A$ is N-1. This follows immediately from the basic theorem that 
if $L$ is a finite {\it separable} field extension of $K$ 
and $A$ is a Noetherian {\it normal} domain, 
then the integral closure $A_L$ of $A$ in $L$ is
finite over $A$ (cf. \cite[Proposition 31.B, p.~232]{matsumura:ca80}). 
% (Corollary 1, p. 232; this follows from ).   

\begin{defn}
  A ring $A$ is said to be {\it Nagata} if 
\begin{enumerate}
  \item $A$ is Noetherian, and  
  \item $A/\grp$ is N-2 for any prime ideal $\grp$ of $A$. 
\end{enumerate}
\end{defn}

If $A$ is Nagata, then any localization of $A$ and any finite
$A$-algebra are all Nagata. Nagata uses the term
``pseudo-geometric rings'' for such rings as coordinate rings of
varieties over any field all share this property. 
Nagata rings are the same
as what are called Noetherian universal Japanese rings in EGA IV
\cite[23.1.1, p.~213]{ega4:20}, which we recall now.  

\begin{defn}\
\begin{enumerate}
\item An integral domain $A$ is said to be {\it Japanese} if it is N-2.
\item An ring $A$ is said to be {\it universal Japanese} if any finitely
  generated integral domain over $A$ is Japanese.  
\end{enumerate}
\end{defn}

From the definition, a universal Japanese ring is not required to be
an integral domain nor to be Noetherian. A universal Japanese domain is
Japanese. It follows 
from the definition that any Noetherian universal Japanese ring is
Nagata. Conversely, the following theorem 
\cite[Theorem 72, p.~240]{matsumura:ca80}, due to Nagata, shows that 
any Nagata ring is also a Noetherian universal Japanese ring.

\begin{thm}\label{nagata} 
  If $A$ is a Nagata ring, then so is any finitely generated $A$-algebra.
\end{thm}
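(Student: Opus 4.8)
The plan is to deduce the statement from the one-variable case together with the stability properties already recorded. Since a finitely generated $A$-algebra is a quotient of some $A[x_1,\dots,x_n]$, and since every quotient of $A$ is a finite $A$-algebra and hence Nagata by the property quoted just above (``any finite $A$-algebra is Nagata''), an induction on the number of variables reduces everything to showing that $A[x]$ is Nagata whenever $A$ is. By the Hilbert basis theorem $A[x]$ is Noetherian, so the real content is that $A[x]/P$ is N-2 for every prime $P\subset A[x]$. Writing $\grp=P\cap A$ and replacing $A$ by $A/\grp$ (again Nagata, hence an N-2 domain, by the finite-algebra property), I reduce to the case in which $A$ is an N-2 domain with fraction field $K$ and $P\cap A=0$; thus $B:=A[x]/P=A[\theta]$ is a domain containing $A$, generated by the image $\theta$ of $x$. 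If $\theta$ is integral over $A$ then $B$ is finite over $A$, hence Nagata and a fortiori N-2, and we are done; so only two cases remain, according as $\theta$ is transcendental over $K$ (i.e. $P=0$) or algebraic over $K$ but not integral.

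The heart of the proof is the transcendental case: if $A$ is an N-2 domain then $A[x]$ is N-2. First, $A[x]$ is N-1, since the integral closure $A'$ of $A$ in $K$ is finite over $A$, the polynomial ring $A'[x]$ is a normal Noetherian domain with fraction field $K(x)$ and is finite over $A[x]$, so it is exactly the integral closure of $A[x]$ in $K(x)$. For a general finite extension $M/K(x)$ I may enlarge $M$ and assume it normal over $K(x)$, and since $A[x]$ is Noetherian it suffices to bound the integral closure in such an $M$ by a finite module. Split the extension into a purely inseparable step $K(x)\subset M_i$ and a separable step $M_i\subset M$, where $M_i$ is the purely inseparable closure of $K(x)$ in $M$. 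For the purely inseparable step one has $M_i\subset K^{1/p^{n}}(x^{1/p^{n}})$ for some $n$; choosing a finite purely inseparable $L/K$ inside $K^{1/p^{n}}$ with $M_i\subset L(x^{1/p^{n}})$, the integral closure $A_L$ of $A$ in $L$ is finite over $A$ by the N-2 hypothesis, and then $A_L[x^{1/p^{n}}]$ is a normal Noetherian domain, finite over $A[x]$, which (being normal and integral over $A[x]$) is the integral closure of $A[x]$ in $L(x^{1/p^{n}})$. For the separable step I apply the finiteness theorem for finite separable extensions of a normal Noetherian domain (recalled after the definition of N-2) to the normal Noetherian domain $A_L[x^{1/p^{n}}]$ and the finite separable extension it generates, and then intersect with the original $M$ to obtain the desired finite module.

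In the remaining case $\theta$ is algebraic over $K$, so $F:=\Frac(B)=K(\theta)$ is finite over $K$. Clearing denominators in a polynomial relation for $\theta$ yields $0\neq a_0\in A$ with $a_0\theta$ integral over $A$; hence $\theta$ is integral over the localization $A_{a_0}$, so $B_{a_0}=A_{a_0}[\theta]$ is finite over $A_{a_0}$. As $A_{a_0}$ is Nagata (a localization of a Nagata ring), $B_{a_0}$ is Nagata and in particular N-2. To pass from $B_{a_0}$ to $B$ I use that, for any fixed finite extension of $F$, the locus where the integral closure of $B$ can fail to be finite is contained in the closed set $V(a_0)\subset\Spec A$; the behaviour along this locus is governed by the finitely generated domains over the lower-dimensional Nagata quotients $A/\grp$ with $a_0\in\grp$, to which the statement applies by Noetherian induction. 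Combining the generic finiteness over $\Spec A_{a_0}$ with this boundary information shows that the integral closure of $B$ in each finite extension of $F$ is finite, i.e. $B$ is N-2.

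The main obstacle is the purely inseparable step in the transcendental case. This is precisely the place where the separable finiteness theorem is unavailable, and where one must feed in the N-2 hypothesis on the ground ring to keep $A_L$ finite over $A$; it is the positive-characteristic phenomenon that forces the Japanese/Nagata hypothesis and that the whole Note is designed to handle. The globalization in the algebraic case is routine by comparison, being a standard Noetherian-induction argument on the non-finite locus, while the transcendental and integral ingredients reduce cleanly to the stability properties of N-2 and to the separable finiteness theorem already in hand.
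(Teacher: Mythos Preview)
The paper does not actually prove this theorem: immediately after the statement it says ``The proof of this theorem is quite involved and the reader [is] referred to Matsumura [\S\,31].'' So there is no in-paper argument to compare against; your sketch is essentially the classical reduction (to $A[x]$, then to $P\cap A=0$, then case split on the nature of $\theta$) that Matsumura carries out.

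Your transcendental case is fine: dominating a finite normal extension of $K(x)$ by $L(x^{1/p^{n}})$ with $L/K$ finite purely inseparable, using the N-2 hypothesis to make $A_L$ finite, and then invoking the separable finiteness theorem over the normal Noetherian ring $A_L[x^{1/p^{n}}]$ is exactly the right mechanism, and it is the place where the Japanese hypothesis does its work.

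The gap is in the algebraic, non-integral case. Your ``Noetherian induction'' on $\Spec A$ produces, for each prime $\grp\ni a_0$, the statement that $(A/\grp)[x]$ is Nagata, hence that every \emph{quotient} $B/\grq$ (with $\grq\cap A=\grp$) is N-2. But the gluing you need is about the integral closure $B_M$ of $B$ itself in a fixed finite extension $M$ of $F$: generic finiteness says $(B_M)_{a_0}$ is finite over $B_{a_0}$, and to conclude that $B_M$ is finite over $B$ you need control of the \emph{localizations} $B_{\grq}$ (or of $B_M$ along $V(a_0)$), not of the quotients $B/\grq$. Knowing that $B/\grq$ is N-2 says nothing about $B_M$, which lives over $\Frac(B)$ rather than over $\Frac(B/\grq)$; so the ``combining'' step, as written, does not go through.

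What is missing is precisely the technical lemma that Matsumura isolates (and that the paper alludes to by calling the proof ``quite involved''): a local--global criterion of the type ``if $B_f$ is normal for some $f\neq 0$ and $B_\grq$ is N-1 for every prime $\grq\ni f$, then $B$ is N-1'', together with an argument (via the openness of $\nor$ for finite algebras over a Nagata ring, essentially Lemma~2.6 of the paper) that supplies the local N-1 input. Without stating and verifying such a criterion, the passage from $B_{a_0}$ to $B$ is an assertion, not a proof. If you fill this in---either by citing the relevant lemma from Matsumura \S31 or by proving a version of it---your outline becomes the standard proof.
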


The proof of this theorem is quite involved and the reader to
referred to Matsumura \cite[\S\, 31]{matsumura:ca80}. The following
provides some more examples (see \cite[Corollaries 1 and 2,
p.~234]{matsumura:ca80}).

\begin{prop}\
\begin{enumerate}
\item If $A$ is a Noetherian normal domain which is N-2, then the
  formal power series ring $A[[X_1,\dots, X_n]]$ is N-2 also.
\item Any Noetherian complete local ring $A$ is a Nagata ring. 
\end{enumerate}
\end{prop}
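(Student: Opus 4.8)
The plan is to prove the two parts in order, the first feeding the second. For part~(1) I would induct on $n$, reducing to the single variable ring $B=A[[X]]$, and then split on the characteristic of $K=\Frac A$: the case $\char K=0$ is essentially free, while $\char K=p>0$ carries the whole difficulty and I would attack it by isolating a purely inseparable extension of exponent one. For part~(2) I would invoke Cohen's structure theorem to present every residue domain $A/\grp$ as a module-finite extension of a power series ring covered by part~(1), and then transfer the N-2 property across that finite extension.

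For part~(1): since $A[[X_1,\dots,X_n]]=A[[X_1,\dots,X_{n-1}]][[X_n]]$ and the formal power series ring over a Noetherian normal domain is again a Noetherian normal domain, an induction reduces the claim to $B=A[[X]]$. If $\char K=0$ then $B$ is a Noetherian normal domain of characteristic zero, so every finite extension of $\Frac B$ is separable and $B$ is N-2 by the cited finiteness of integral closure in separable extensions (Prop.~31.B). Assume $\char K=p>0$. Using Prop.~31.B to dispose of the separable part of any finite extension, I would reduce to showing that the integral closure $\tilde B$ of $B$ in a purely inseparable extension $E(c^{1/p})$, with $E=\Frac B$ and $c\in B$ (denominators cleared), is a finite $B$-module. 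Here $\tilde B=B^{1/p}\cap E(c^{1/p})$ sits inside $B^{1/p}=A^{1/p}[[X^{1/p}]]$, the last identity coming from the fact that Frobenius acts coefficientwise on power series.

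The finiteness of $\tilde B$ over $B$ is the crux, and it is where the hypothesis that $A$ is N-2 must enter. Note that one cannot simply adjoin the $p$-th roots of all coefficients of $c$: when $[K:K^p]$ is infinite these generate an infinite extension of $A$, yet $\tilde B$ is still finite, as already the example $A=k$ with $k=\F_p(t_1,t_2,\dots)$ shows, where $k[[X]]$ is N-2 although $k^{1/p}$ is infinite over $k$. The point is that only the single element $c^{1/p}$ is adjoined, and the growth of $\tilde B$ over $B$ is governed by finitely much ``leading'' data of $c$: the residue and ramification data of $\tilde B/B$ live in a finite extension $K'/K$, and $A$ being N-2 makes the integral closure of $A$ in $K'$ finite over $A$, which one leverages to confine $\tilde B$ to a finite $B$-submodule of $B^{1/p}$. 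Carrying this control out precisely, in the presence of possibly infinite $p$-degree, is the main obstacle of the whole argument.

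For part~(2): Nagata passes to homomorphic images, since for a Nagata ring $S$ and a quotient $A=S/I$ every prime quotient of $A$ has the form $S/\grq$ with $\grq\supseteq I$; so it suffices to show that each complete local domain $D=A/\grp$ is N-2. By Cohen's structure theorem $D$ is module-finite over a complete regular local subring $T=\Lambda[[Y_1,\dots,Y_m]]$, where $\Lambda$ is a field (equicharacteristic case) or a complete discrete valuation ring of characteristic zero (mixed characteristic case). In either case $\Lambda$ is a Noetherian normal domain which is N-2 --- trivially for a field, and because a characteristic zero fraction field makes all finite extensions separable for the valuation ring --- so part~(1) gives that $T$ is N-2. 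Finally, for any finite extension $L'$ of $\Frac D$ the integral closure of $D$ in $L'$ coincides with that of $T$ in $L'$, because $T\subseteq D$ is finite and hence integral; the latter is a finite $T$-module since $T$ is N-2, therefore a finite $D$-module. Thus $D$ is N-2 and $A$ is Nagata.
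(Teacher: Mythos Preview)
The paper does not prove this proposition; it is quoted from Matsumura \cite[Corollaries~1 and~2, p.~234]{matsumura:ca80}, so the comparison is with Matsumura's argument.

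Your part~(2) is correct and is exactly the standard proof: Cohen's structure theorem writes each complete local domain $A/\grp$ as a finite module over $T=\Lambda[[Y_1,\dots,Y_m]]$ with $\Lambda$ a field or a characteristic-zero complete DVR, both trivially Noetherian normal N-2; part~(1) then makes $T$ N-2, and N-2 passes up the finite extension $T\subseteq A/\grp$ exactly as you say.

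Your part~(1) has a genuine gap, precisely where you flag it. The reductions to $n=1$, to $\char K=p>0$, and to a single radical extension $E(c^{1/p})$ with $c\in B=A[[X]]\setminus E^p$ are all correct, as is the identification $\tilde B=B^{1/p}\cap E(c^{1/p})\subseteq A^{1/p}[[X^{1/p}]]$. But the sentence ``the residue and ramification data of $\tilde B/B$ live in a finite extension $K'/K$'' is a hope, not an argument: $B$ is not local, and even after localizing at the height-one prime $(X)$ there is no evident way to globalize that control into a bound on $\tilde B$. The tool that actually closes the gap in Matsumura's proof (the idea is Tate's) is a derivation estimate: if $D$ is a derivation of $E$ with $D(B)\subseteq B$ and $D(c)\neq 0$, then a direct calculation yields
\[
D(c)\cdot\tilde B\ \subseteq\ \sum_{i=0}^{p-1} B\,c^{i/p},
\]
so $\tilde B$ is finite over $B$. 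One then has to exhibit such a $D$ for every $c\in B\setminus E^p$; the derivation $\partial/\partial X$ already disposes of all $c\notin A[[X^p]]$, and the remaining case is handled by combining coefficientwise extensions of derivations of $K$ with the N-2 hypothesis on $A$. As written, your proposal lacks this mechanism and so does not yet establish part~(1).
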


\def\nor{{\rm Nor}}

For any scheme $X$, let $\nor (X)$ denote the subset of $X$ that
consists of normal points. 

\begin{lemma} Let $A$ be a Noetherian domain and $X:=\Spec A$.
  \begin{enumerate}
  \item If there is a non-zero element $f\in A$ such that
    $A_f:=A[1/f]$ is normal, then $\nor(X)$ is open in $X$.
  \item If $A$ is N-1, then $\nor(X)$ is open in $X$.   
  \end{enumerate}
\end{lemma}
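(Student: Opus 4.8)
The plan is to prove both openness statements by the standard Nagata-style criterion for the normal locus, reducing (2) to (1) and proving (1) directly.

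\medskip

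First I would recall that normality is a local property that behaves well: a Noetherian domain $A$ is normal if and only if it satisfies Serre's conditions $(R_1)$ and $(S_2)$, and more relevantly, a point $\grp\in\Spec A$ lies in $\nor(X)$ precisely when the local ring $A_\grp$ is integrally closed in its fraction field (which, since $A$ is a domain, is $K$). So $\nor(X)=\{\grp: A_\grp=(A_\grp)'\}$. The key elementary fact is that if $A'$ denotes the integral closure of $A$ in $K$, then $A_\grp$ is normal if and only if $(A')_\grp = A_\grp$, i.e.\ if and only if the $A$-module $A'/A$ vanishes after localizing at $\grp$.

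\medskip

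For part (1): assume $A_f=A[1/f]$ is normal for some nonzero $f\in A$. The integral closure $A'$ of $A$ in $K$ then satisfies $A'\subseteq A_f$ (since $A_f$ is normal and contains $A$, and $A'$ is the smallest normal ring between $A$ and $K$ — actually one gets $A'[1/f]=A_f$ and $A'\subseteq A_f$). Hence every element $x\in A'$ can be written as $x=a/f^n$ for some $a\in A$, $n\geq 0$; equivalently, $f^n x\in A$ for $n\gg0$. Now $\nor(X)$ contains the basic open set $D(f)=\Spec A_f$. For a prime $\grp$ not containing $f$, $A_\grp$ is a localization of $A_f$, hence normal. This already shows $D(f)\subseteq\nor(X)$, but to get that $\nor(X)$ is open one must handle primes containing $f$. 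The clean way is: $\grp\in\nor(X)$ iff $(A'/A)_\grp=0$ iff $\grp$ does not contain the annihilator ideal $\grc:=\operatorname{Ann}_A(A'/A)$ (the conductor). Indeed $(A'/A)_\grp=0$ iff every element of $A'/A$ is killed by something outside $\grp$ iff $\grc\not\subseteq\grp$ — but here one needs $A'/A$ to be a \emph{finitely generated} $A$-module for the annihilator of the whole module to detect vanishing of the localization at every prime simultaneously. This is the main obstacle, and it is exactly where the hypothesis of (1) is used differently than in (2): since $A'\subseteq A_f$, the module $A'/A$ is annihilated by a power of $f$, so $A'/A$ is supported on $V(f)$; more carefully, write $A'/A=\bigcup_n (f^{-n}A\cap A')/A$, an increasing union, and each $A'_n:=f^{-n}A\cap A'$ is a finitely generated $A$-module (isomorphic to an ideal of $A$). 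One then argues that $\nor(X)=X\setminus V(\grc)$ where $\grc$ is the conductor: for $\grp\notin V(\grc)$ pick $g\in\grc\setminus\grp$, then $gA'\subseteq A$, so $A'\subseteq A[1/g]=A_\grp$-integrally... — more precisely $(A')_\grp=A_\grp$, giving normality; conversely if $A_\grp$ is normal then $(A')_\grp=A_\grp$ so $A'_\grp\subseteq A_\grp$, and since each $A'_n$ is finite over $A$ one concludes $\grc\not\subseteq\grp$. Thus $\nor(X)=D(\grc)$ is open.

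\medskip

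For part (2): if $A$ is N-1, then by definition $A'$ is a finite $A$-module, hence $A'/A$ is a finite $A$-module, and its annihilator $\grc=\operatorname{Ann}_A(A'/A)$ (the conductor) is an ideal of $A$ with the property that for any prime $\grp$, $(A'/A)_\grp=0\iff \grc\not\subseteq\grp$. Therefore $\nor(X)=\{\grp:(A'/A)_\grp=0\}=\Spec A\setminus V(\grc)=D(\grc)$, which is open. This is the cleaner of the two arguments and could be presented first, with (1) then following either by the same conductor argument applied to the finite modules $A'_n$, or by simply remarking that the hypothesis of (1) forces $A$ to be N-1 on the complement — actually the most economical route is to observe that (1) can be deduced from (2) after noting $\nor(\Spec A) = \nor(\Spec A_f)\cup(\nor(\Spec A)\cap V(f))$ and that openness is local, but verifying openness near points of $V(f)$ still requires the finiteness of $A'_n$ as above. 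I expect the write-up to emphasize the conductor ideal $\grc$ and the elementary module-theoretic fact that the support of a finitely generated module is closed; everything else is bookkeeping.
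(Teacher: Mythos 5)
Your treatment of (2) is correct and coincides with the paper's own ``simpler'' second proof: since $A$ is N-1 the module $A'/A$ is finite, its support is closed, and $\nor(X)$ is the complement (the paper phrases it with $M_\grp=(A_\grp)'/A_\grp$ rather than with the conductor, but it is the same argument).

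Your proof of (1), however, has a genuine gap, and in fact the identity you build it on, $\nor(X)=D(\grc)$ with $\grc=\operatorname{Ann}_A(A'/A)$, is false under the hypothesis of (1). The trouble is exactly the ``conversely'' step you gloss over: from $(A'/A)_\grp=0$ and the finiteness of each $A'_n=f^{-n}A\cap A'$ you only get, for every $n$, some $g_n\notin\grp$ with $g_nA'_n\subseteq A$, i.e. $\grc_n:=\operatorname{Ann}_A(A'_n/A)\not\subseteq\grp$; but $\grc=\bigcap_n\grc_n$, and a decreasing chain of ideals each not contained in $\grp$ can have intersection contained in $\grp$ --- indeed equal to $(0)$. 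Concretely, take any one-dimensional Noetherian local domain $(A,\grm)$ whose integral closure $A'$ is not finite over $A$ (Akizuki/Nagata examples). For any $0\neq f\in\grm$ one has $A_f=K$, so the hypothesis of (1) holds; yet $\grc=0$, because $0\neq c\in\grc$ would give $A'\subseteq c^{-1}A$ and force $A'$ to be finite. Hence $D(\grc)=\emptyset$ while $\nor(X)$ contains the generic point. In other words, the non-normal locus is $\operatorname{Supp}(A'/A)$, and for a non-finitely-generated module the support need not equal $V(\operatorname{Ann})$; hypothesis (1) does not give finiteness of $A'/A$, so the conductor mechanism that works in (2) is unavailable here. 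The paper proves (1) by a different device, which is what the hypothesis on $f$ actually buys: Serre's criterion $(R_1)+(S_2)$ applied at each $\grq$, together with the finiteness of the set $E$ of associated primes $\grp$ of $A/f$ that have height $>1$ or have height $1$ with $A_\grp$ not regular; one then checks $\nor(X)=X-\bigcup_{\grp\in E}V(\grp)$. Some input of this kind (finitely many bad primes attached to $f$) is needed; your argument cannot be repaired while staying with the conductor of the full normalization.
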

\def\HT{{\rm ht}}
\begin{proof}
  (1) This is Lemma 3 of Matsumura \cite[\S\,31.G,
      p.~238]{matsumura:ca80} and its proof is sketched there. 
      We provide more
      details for the convenience of the reader. 
      Using a criterion for normality \cite[Theorem 39,
      p.~125]{matsumura:ca80},
      for $\grq\in \Spec A$, the integral domain $A_\grq$ is normal if
      and only if it satisfies the conditions ($R_1$) and ($S_2$),
      that is, $A_\grp$ is regular for all prime ideals $\grp\subseteq
      \grq$ with 
      $\HT(\grp)=1$, and ${\rm Ass}(A_\grq/f')$, the set of associated
      prime ideals of $A_\grq/f'$ for all $0\neq f'\in \grq$, has no
      embedded prime ideals 
      (cf. \cite[p.~125]{matsumura:ca80}). Let
\[ E:=\{\grp\in {\rm Ass}(A/f)\,|\, \HT(\grp)=1 \text{\ and\ } A_\grp
      \text{\ is not regular, or\  } \HT(\grp)>1\, \}. \]  
      Clearly $E$ is a finite subset.     
      We claim that \[ \nor(X)=X-\bigcup_{\grp\in E} V(\grp). \]
      Let $\grq$ be a prime ideal not contained in $\cup_{\grp\in E}
      V(\grp)$. We shall show that $\grq$ is a normal point. 
      If $\grq\in \Spec A[1/f]$, then $\grq$ is a normal
      point by our assumption. Suppose that $f\in \grq$. If $\grp\in
      {\rm Ass}(A_\grq/f)$, then $\HT(\grp)=1$. This means ${\rm
      Ass}(A_\grq/f)$ has no embedded prime ideals and hence $A_\grq$
      satisfies ($S_2$). On the other hand, let $\grp\subseteq
      \grq$ be prime ideals with $\HT(\grp)=1$. If $f\not\in \grp$,
      then $\grp$ is a normal point and $(A_\grq)_\grp=A_\grp$ is
      regular. If $f\in \grp$, then $(A_\grq)_\grp=A_\grp$ is
      regular, by the definition of $E$.. 
      This shows that $\grq\in \nor(X)$ and the proof of (1) is
      completed.  
   
  (2) Let $A'$ be the normalization of $A$,
      and let $X':=\Spec A'$. Since $A$ is N-1, the natural morphism
      $X'\to X$ is a finite dominant birational morphism. Then there
      is a non-zero element $f\in A$ such that the restriction to the
      open subset 
      \[ X'_f:=\Spec A'[1/f]\to X_f:=\Spec A_f \]
is an isomorphism. In particular, $A_f$ is normal. It follows from (1)
that $\nor(X)$ is open in $X$. 

We provide another simpler proof of (2), which is not based on
(1). Put $M:=A'/A$; this is a finite $A$-module as $A$ is N-1. 
For each $\grp\in
X$, we have
\[ M_\grp=(A')_\grp/A_\grp=(A_\grp)'/A_\grp, \]
as the operations localization and normalization commute. It follows that
\[ \nor(X)=\{\grp\in X\, |\, M_\grp=0\, \}. \]
Since $A$ is Noetherian and $M$ is finite over $A$, $\nor(X)$ is open
in $X$.  \qed  
\end{proof}

Let $A$ be a Noetherian semi-local ring and $A^*$ its completion. If
$A^*$ is reduced, then $A$ is said to be {\it analytically
  reduced}. 

\begin{thm}\cite[Theorem 70, p.~236]{matsumura:ca80}\label{an_unram}
  Any Nagata semi-local domain is analytically reduced. 
\end{thm}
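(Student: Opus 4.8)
\emph{Proof proposal.} The plan is to reduce to the local case and then produce, via \thmref{nagata} applied to a Rees algebra, the ideal-theoretic bound that Rees's criterion converts into analytic unramifiedness. First I would reduce to the local case. Writing the maximal ideals of the semi-local domain $A$ as $\grm_1,\dots,\grm_r$, the completion (with respect to the Jacobson radical) decomposes as $\wh{A}\cong\prod_{i}\wh{A_{\grm_i}}$, and a finite product is reduced if and only if each factor is; since each localization $A_{\grm_i}$ is again a Nagata (localizations of Nagata rings are Nagata) local domain, it suffices to prove that the completion of an arbitrary Nagata \emph{local} domain $(A,\grm)$ with fraction field $K$ is reduced. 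If $\dim A=0$ then $A$ is a field and there is nothing to prove, so I assume $\grm\neq 0$ and fix a nonzero $\grm$-primary ideal $I$ (say $I=\grm$).

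The heart of the argument is a bound of the shape $\ol{I^{\,n}}\subseteq I^{\,n-k}$ for all $n\ge k$, where $\ol{I^{\,n}}$ denotes the integral closure of the ideal $I^{\,n}$ in $A$. To obtain it I would pass to the Rees algebra $\calR:=A[It]\subseteq A[t]$, a finitely generated $A$-algebra which is a domain with $\Frac(\calR)=K(t)$. By \thmref{nagata}, $\calR$ is again Nagata; being a Nagata domain it is N-2, hence N-1, so its integral closure in $K(t)$ is a finite $\calR$-module. The integral closure $S$ of $\calR$ inside $A[t]$ is then an $\calR$-submodule of that finite module, hence finite over $\calR$. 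Now $S$ is graded with $n$-th piece $S_n=\ol{I^{\,n}}\,t^n$ (the standard identification of the integral closure of a Rees algebra). Being a finite graded module over $\calR=A[It]$, $S$ is generated in degrees $\le k$ for some $k$, so for $n\ge k$ one finds $\ol{I^{\,n}}=\sum_{j=0}^{k}\ol{I^{\,j}}\,I^{\,n-j}\subseteq I^{\,n-k}$, using $\ol{I^{\,j}}\subseteq A$ and $I^{\,n-j}\subseteq I^{\,n-k}$ for $j\le k$.

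With this bound in hand I would invoke Rees's theorem: a Noetherian local ring is analytically unramified (equivalently, has reduced completion) precisely when some $\grm$-primary ideal $I$ admits an integer $k$ with $\ol{I^{\,n}}\subseteq I^{\,n-k}$ for all large $n$. This gives that $\wh{A}$ is reduced, and tracing back through the semi-local decomposition finishes the proof. I expect the main obstacle to be exactly the nontrivial implication packaged in Rees's criterion, namely that the bound forces the completion to be reduced: heuristically a nonzero nilpotent of $\wh{A}$ would have to lie in $\bigcap_n\ol{I^{\,n}}\wh{A}\subseteq\bigcap_n I^{\,n-k}\wh{A}=0$, but making this precise requires comparing the integral-closure filtration with the $I$-adic filtration on $\wh{A}$ and is the part one genuinely cites from Rees. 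Everything else—the decomposition $\wh{A}\cong\prod_i\wh{A_{\grm_i}}$ and the finiteness of the normalized Rees algebra—is routine once \thmref{nagata} is granted, and the argument is uniform in the characteristic, the role of the full strength N-2 (rather than mere N-1) being to guarantee that the Rees algebra, a new ring, is still N-1.
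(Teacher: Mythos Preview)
The paper does not prove this theorem: it is quoted with a bare citation to Matsumura and then used only as a black box (in Example~\ref{non-excellent}) to detect non-Nagata rings. So there is no argument in the paper to compare yours against.

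That said, your Rees-algebra approach is correct. The reduction to the local case is routine; applying \thmref{nagata} to $\calR=A[It]$ to force the normalization of $\calR$ inside $A[t]$ to be a finite graded $\calR$-module, and reading off $\ol{I^{\,n}}\subseteq I^{\,n-k}$ from a bound on the degrees of its generators, is sound. The final implication you attribute to Rees is in fact elementary once the bound is in hand: given $x\in\wh A$ with $x^m=0$ and any $N\ge k$, approximate $x$ by some $a\in A$ modulo $I^{mN}\wh A$; then $a^m\in I^{mN}\wh A\cap A=I^{mN}$, so $a\in\ol{I^{N}}\subseteq I^{N-k}$, whence $x\in I^{N-k}\wh A$ for all $N$ and $x=0$ by Krull's intersection theorem. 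One caveat worth flagging: your argument rests on \thmref{nagata} (Matsumura's Theorem~72), which in Matsumura's text is proved \emph{after} Theorem~70 and whose proof uses analytic unramifiedness; within the present paper this is harmless since both results are imported without proof and \thmref{nagata} is stated first, but your route is not a self-contained substitute for Matsumura's development. For the record, Matsumura's own proof of Theorem~70 does not pass through Rees algebras: it replaces $A$ by its normalization $A'$ (finite over $A$ by N-1), invokes a lemma that analytic unramifiedness descends along finite injections of semi-local rings, and then treats the normal local case directly.
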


This is useful for checking non-Nagata rings; see
Example~\ref{non-excellent}. \\
 
%  (Question: what can we say about the completion $A^*$ of a
%  Noetherian 
%  local normal domain in general? Check EGA the connection with
%  geometrically unbranched.) \\
%  The answer is no, see Nagata's example. 

\subsection{G-rings and closedness of singular loci}
\label{sec:32}

Recall that a Noetherian local ring $(A,\grm,k)$ is said to be a 
{\it  regular local ring} if $\dim A=\dim_k \grm/\grm^2$
\cite[p.~78]{matsumura:ca80}. 
A Noetherian ring $A$ is said to be {\it regular} if all local rings
$A_\grp$ are regular local for $\grp\in\Spec A$. One can  show
that if the local ring $A_\grm$ is regular for all maximal ideals
$\grm$ of $A$, then $A$ is regular 
\cite[\S\,18.G Corollary, p.~139]{matsumura:ca80}. Any regular local
ring is an integral domain. 

\begin{defn}(\cite[\S\,33, p.~249]{matsumura:ca80}). 
\begin{enumerate}
  \item Let $A$ be a Noetherian ring containing a field $k$. We say
    that $A$ is {\it geometrically regular over $k$} if for any finite
    field extension $k'$ over $k$, the ring $A\otimes_k k'$ is 
    regular. This is equivalent to saying that the local ring $A_\grm$
    has the same property for all maximal ideals $\grm$ of $A$.
  \item Let $\phi:A\to B$ be a homomorphism (not necessarily of finite
    type) of Noetherian rings. We say that $\phi$ is {\it regular} if it is
    flat and for
    each $\grp\in \Spec A$, the fiber ring $B\otimes_A k(\grp)$ is
    geometrically regular over the residue field $k(\grp)$. 
  \item A Noetherian ring $A$ is said to be a {\it G-ring} if for each $\grp
    \in \Spec A$, the natural map 
    $\phi_\grp: A_\grp\to (A_\grp)^*$ is regular,
    where $(A_\grp)^*$ denotes the completion of the local ring
    $A_\grp$. 
\end{enumerate}
\end{defn}

Note that the natural map $\phi_\grp: A_\grp\to (A_\grp)^*$ is faithfully
flat. The fibers of the natural morphism $\Spec (A_\grp)^*\to \Spec
A_\grp$ are called formal fibers. To say a Noetherian ring $A$ is a
$G$-ring then is equivalent to saying that all formal fibers of the
canonical map $\phi_\grp$ for each prime ideal $\grp$ of $A$ 
are geometrically regular. 
It is clear that, if $A$ is a G-ring, then any localization $S^{-1}A$
of $A$ and any homomorphism image $A/I$ of $A$ are G-rings.  

%A result states that $A$ is geomerically regular over $k$ if and only
%if $A$ is formally smooth .  

\begin{thm}\cite[Theorem 93, p.~279]{matsumura:ca80}.
  Let $(A,\grm$ be a Noetherian local ring containing a field
  $k$. Then $A$ is geometrically regular over $k$ if and only if $A$
  is formally smooth over $k$ in the $\grm$-adic topology. 
\end{thm}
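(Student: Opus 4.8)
The plan is to prove the equivalence by reducing to the case where $A$ is complete and then translating each of the two conditions into a statement about the (completed) module of K\"ahler differentials. Here geometric regularity means that $A\otimes_k k'$ is regular for every finite extension $k'/k$, while formal smoothness in the $\grm$-adic topology is the lifting property: for every discrete $k$-algebra $C$ with nilpotent ideal $I$ and every continuous $k$-homomorphism $A\to C/I$ (continuous meaning it factors through some $A/\grm^n$), a $k$-homomorphism $A\to C$ lifting it exists. The bridge between the two will be the second fundamental (conormal) exact sequence.

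First I would reduce to $A$ complete. Every test map $A\to C/I$ above factors through $A/\grm^n=\wh A/\wh\grm^{\,n}$, and so does any lift $A\to C$; hence $A$ is formally smooth over $k$ in the $\grm$-adic topology if and only if $\wh A$ is formally smooth over $k$ in the $\wh\grm$-adic topology. On the other side, since completion commutes with the finite base changes $A\otimes_k k'$ and a Noetherian (semi-)local ring is regular if and only if its completion is, geometric regularity of $A$ over $k$ holds if and only if it holds for $\wh A$. Thus it suffices to treat complete $A$.

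For complete $A$ I would invoke the Cohen structure theorem to write $A=P/\gra$ with $P=\ell[[x_1,\dots,x_n]]$ a power series ring over a coefficient field $\ell\cong A/\grm$. Both hypotheses force the residue field extension $A/\grm$ over $k$ to be separable, so $\ell$ can be taken separable over $k$ and then $P$ is formally smooth over $k$; the whole question concentrates on $\gra$. The crux is to match two criteria attached to the conormal sequence
$$ \gra/\gra^2 \xrightarrow{\ \delta\ } \Omega_{P/k}\otimes_P A \longrightarrow \Omega_{A/k}\longrightarrow 0 . $$
On one hand, with $P$ formally smooth, $A$ is formally smooth over $k$ exactly when $\delta$ is injective with a direct-summand image, so that first-order deformations always lift. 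On the other hand, geometric regularity is exactly the condition that the Jacobian of a set of generators of $\gra$ keep its expected rank after every finite base change $\otimes_k k'$, i.e.\ that $\delta$ remain injective universally. For the direction formal smoothness $\Rightarrow$ geometric regularity I would instead argue directly that formal smoothness is stable under the base extension $k\to k'$ and that a formally smooth Noetherian local algebra over a field is regular, and apply this to each $A\otimes_k k'$; for the converse I would produce the required lifts from the split conormal sequence that the Jacobian condition provides.

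The step I expect to be the main obstacle is this last reconciliation in positive characteristic. Because $A$ is not of finite type over $k$, one must work $\grm$-adically throughout and use completed differentials, so the classical finite-type Jacobian criterion is not directly available; and one must verify that universal injectivity of $\delta$ under all finite --- equivalently, all finite purely inseparable --- extensions of $k$ is precisely the splitting needed for the lifting property. This passage through inseparable base change is exactly why ordinary regularity is not enough and the word \emph{geometric} is forced into the statement.
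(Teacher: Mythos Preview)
The paper does not prove this theorem at all: it is stated with the citation \cite[Theorem 93, p.~279]{matsumura:ca80} and then immediately used, so there is no ``paper's own proof'' to compare your attempt against. What you have sketched is essentially the argument one finds in Matsumura (and in EGA~$0_{\mathrm{IV}}$, \S22): reduce to the complete case, write $A=P/\gra$ with $P$ formally smooth over $k$ via the Cohen structure theorem, and analyze the conormal sequence $\gra/\gra^2\to\Omega_{P/k}\otimes_P A\to\Omega_{A/k}\to 0$.

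A couple of points worth tightening in your sketch. First, your reduction to complete $A$ for geometric regularity needs a little care: for a finite extension $k'/k$ the ring $A\otimes_k k'$ is a Noetherian \emph{semi}-local ring, not local in general, and you should check that regularity of each of its localizations is equivalent to regularity of $(\wh A)\otimes_k k'$; this is fine but should be said. Second, in the genuinely delicate direction (geometric regularity $\Rightarrow$ formal smoothness), the argument in Matsumura does not proceed via a ``Jacobian keeps rank after every $\otimes_k k'$'' criterion as you phrase it; rather one uses the notion of a $p$-basis (or a quasi-coefficient field) and shows directly that the map $\delta:\gra/\gra^2\to\Omega_{P/k}\otimes_P A$ is split injective by exploiting that $A/\grm$ is separable over $k$ and computing with completed differentials $\wh\Omega$. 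Your instinct that the inseparable base changes are where the content lies is correct, but translating ``geometric regularity'' into ``universal injectivity of $\delta$'' is not quite how the standard proof runs; the splitting of $\delta$ is obtained more directly from the structure of $\wh\Omega_{P/k}$ once a $p$-basis is chosen.
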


\begin{lemma}\cite[Lemma 2, p.~251]{matsumura:ca80}.
  let $\phi:A\to B$ be a faithfully flat, regular homomorphism. Then
  \begin{enumerate}
  \item $A$ is regular (resp. normal, resp. Cohen-Macaulay,
    resp. reduced) if and only if $B$ has the same property;
  \item If $B$ is a G-ring, then so is $A$.
  \end{enumerate}
\end{lemma}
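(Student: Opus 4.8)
The plan is to treat the two assertions separately, reducing both to flat local homomorphisms and exploiting the two ``halves'' of a regular homomorphism: faithful flatness gives \emph{descent}, while geometric regularity of the fibres gives \emph{ascent}.

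For (1), I would first note that each of the four properties is local on $\Spec$, so by faithful flatness (which makes $\Spec B\to\Spec A$ surjective) it suffices, for every $\grp\in\Spec A$ and a chosen $\grq\in\Spec B$ over $\grp$, to compare $A_\grp$ with $B_\grq$ through the flat local homomorphism $A_\grp\to B_\grq$ (which is therefore faithfully flat). Its fibre $B_\grq/\grp B_\grq$ is a localization of $B\otimes_A k(\grp)$, hence again geometrically regular over $k(\grp)$; in particular it is regular, normal, Cohen--Macaulay and reduced, and stays so after any finite field extension. I would then invoke the standard fibre criteria for flat local homomorphisms of Noetherian local rings \cite{matsumura:ca80}: the descent direction (``$B$ has $P\Rightarrow A$ has $P$'') needs only faithful flatness --- for regularity via $\mathrm{gl.dim}\,A_\grp\le\mathrm{gl.dim}\,B_\grq<\infty$ and Serre's homological criterion, and for the other three via the behaviour of depth, Serre's conditions $(R_i),(S_i)$, and the inclusion $A\hookrightarrow B$ --- while the ascent direction (``$A$ has $P$ and the fibre has the geometric form of $P\Rightarrow B$ has $P$'') is exactly where geometric regularity of the fibre is consumed. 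I would explicitly record that the descent half uses \emph{no} regularity hypothesis on $\phi$, only faithful flatness, since part (2) reuses precisely this.

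For (2), fix $\grp\in\Spec A$; by definition I must show the completion map $u\colon A_\grp\to (A_\grp)^*$ is regular. Choose $\grq\in\Spec B$ over $\grp$; then $A_\grp\to B_\grq$ is faithfully flat and regular (a localization of a regular homomorphism is regular), while $B_\grq\to (B_\grq)^*$ is regular because $B$ is a $G$-ring. Functoriality of completion gives a commutative square
\[
\begin{CD}
A_\grp @>>> B_\grq \\
@VuVV @VVV \\
(A_\grp)^* @>v>> (B_\grq)^*
\end{CD}
\]
in which $v$ is faithfully flat (the completion of a flat local homomorphism of Noetherian local rings is again flat, and a flat \emph{local} homomorphism of local rings is faithfully flat). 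Going around the square, the composite $v\circ u$ equals $A_\grp\to B_\grq\to (B_\grq)^*$, which is regular by transitivity of regular homomorphisms. Now for each $\grp'\in\Spec A_\grp$ with residue field $\kappa=k(\grp')$ and each finite extension $L/\kappa$, I would compare fibres: writing $F=(A_\grp)^*\otimes_{A_\grp}\kappa$ for the $\kappa$-fibre of $u$, the $L$-fibre of $v\circ u$ is
\[
\bigl(F\otimes_\kappa L\bigr)\otimes_{(A_\grp)^*}(B_\grq)^*,
\]
that is, the $L$-fibre of $u$ base-changed along the faithfully flat map $v$. Since $v\circ u$ is regular this ring is regular, so by the descent half of (1) (faithful flatness alone) the ring $F\otimes_\kappa L$ is regular. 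As $\grp'$ and $L$ were arbitrary, $u$ is regular, and hence $A$ is a $G$-ring.

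The routine part is (1), which is bookkeeping of known fibre theorems. The crux is the reuse in (2): one must check that passing to completions keeps the vertical map $v$ faithfully flat, and that the fibre of the composite $v\circ u$ is genuinely the faithfully flat base change (along $v$) of the fibre of $u$, so that the descent half of (1) applies fibrewise. I expect the main obstacle to be verifying flatness of $v\colon(A_\grp)^*\to(B_\grq)^*$, for which I would appeal to the standard behaviour of completion for flat local homomorphisms, together with the automatic faithfulness of a flat local map of local rings.
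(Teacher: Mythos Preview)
The paper does not give its own proof of this lemma; it merely quotes the statement from Matsumura \cite[Lemma~2, p.~251]{matsumura:ca80} and moves on. Your argument is correct and is essentially the standard one found there: part~(1) is the usual package of ascent/descent along flat local maps with geometrically regular fibres, and part~(2) is the completion-square argument, using that the induced map $(A_\grp)^*\to(B_\grq)^*$ between completions of a flat local homomorphism of Noetherian local rings is again flat and local (hence faithfully flat), together with transitivity of regular homomorphisms and the descent half of~(1) applied fibrewise. There is nothing to correct.
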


\def\reg{{\rm Reg}}

For a Noetherian scheme $X$, let $\reg (X)$ denote the subset of $X$
that consists of regular points.
 
\begin{defn} Let $A$ be a Noetherian ring. 
  \begin{enumerate}
  \item We say that $A$ is {\it J-0} if $\reg (\Spec A)$ contains a
    non-empty open set of $\Spec A$.
  \item We say that $A$ is {\it J-1} if $\reg (\Spec A)$ is open in  
    $\Spec A$.
  \end{enumerate}
\end{defn}

If $A$ is a integral domain, then $\reg(\Spec A)$ is non-empty and
hence the condition J-1 implies J-0. Indeed, the the generic point of
$\Spec A$ is a regular point as the localization of $A$ is its
quotient field, which is a regular local ring.

\begin{thm}\label{j2} \cite[Theorem 73, p.~246]{matsumura:ca80}.
  For a Noetherian ring $A$, the following conditions are equivalent
  \begin{enumerate}
  \item any finitely generated $A$-algebra $B$ is J-1;
  \item any finite $A$-algebra $B$ is J-1;
  \item for any $\grp\in \Spec A$, and for any finite radical
    extension $K'$ of $k(\grp)$, there exists a finite $A$-algebra
    $A'$ satisfying $A/\grp\subseteq A'\subseteq K'$ which is J-0 and
    whose quotient field is $K'$. 
  \end{enumerate}
\end{thm}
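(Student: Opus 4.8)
The equivalence will be proved by the cycle $(1)\Rightarrow(2)\Rightarrow(3)\Rightarrow(1)$, the first two links being short and the third carrying all the content. Now $(1)\Rightarrow(2)$ is immediate, a finite algebra being of finite type. For $(2)\Rightarrow(3)$: given $\grp\in\Spec A$ and a finite radical extension $K'$ of $k(\grp)=\Frac(A/\grp)$, write $K'=k(\grp)(\alpha_1,\dots,\alpha_m)$; each $\alpha_i$ is algebraic over $k(\grp)$, so clearing denominators in a monic relation yields $0\ne c_i\in A/\grp$ with $c_i\alpha_i$ integral over $A/\grp$. Then $A':=(A/\grp)[c_1\alpha_1,\dots,c_m\alpha_m]\subseteq K'$ is module-finite over $A/\grp$, hence a finite $A$-algebra; it is a domain with $\Frac(A')=k(\grp)(\alpha_1,\dots,\alpha_m)=K'$; and by $(2)$ it is J-1, which for the irreducible scheme $\Spec A'$, whose generic point is a field and hence regular, forces J-0. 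Thus $A'$ is as required.

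The substance is $(3)\Rightarrow(1)$: one must prove $\reg(\Spec B)$ open for every finite-type $A$-algebra $B$. First I would reduce to the case of an integral domain: replacing $B$ by $B_{\mathrm{red}}$ changes $\reg(\Spec B)$ only on the closed non-reduced locus, and for $B$ reduced with minimal primes $\grp_1,\dots,\grp_m$, a point of $\reg(\Spec B)$ lies on a single irreducible component (its local ring being a domain), whence $\reg(\Spec B)=\bigcup_i\bigl(\reg(\Spec(B/\grp_i))\setminus\bigcup_{j\ne i}V(\grp_j)\bigr)$, reducing the question to each finite-type $A$-domain $B/\grp_i$. For $B$ a finite-type $A$-domain I would argue by Noetherian induction on $\dim B$. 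The non-regular locus is always stable under specialization, so it suffices to show it constructible, and the driving input is the statement that there is a nonzero $f\in B$ with $B_f$ regular: granting it, $\reg(\Spec B)\supseteq D(f)$, and on the lower-dimensional $V(f)=\Spec(B/fB)$ one controls $\reg(\Spec B)\cap V(f)$ using that a Noetherian local ring is regular as soon as its quotient by a nonzerodivisor is, together with the inductive hypothesis for $B/fB$ and for the components of $V(f)$ along which $B$ is generically regular.

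This generic-regularity statement is where hypothesis $(3)$ is used. Put $\grp=\ker(A\to B)$, $k=k(\grp)$, $L=\Frac(B)$. Adjoining finitely many $p$-power roots gives a finite radical extension $k'/k$ over which $L$ has a separating transcendence basis; applying $(3)$ to $\grp$ and $K'=k'$ produces a finite $A$-algebra $A'$ with $A/\grp\subseteq A'\subseteq k'$, $\Frac(A')=k'$, and $A'$ J-0. Over a basic open $A'$ is regular, a polynomial ring over it is regular, and $B\otimes_{A/\grp}A'$ — which after inverting a suitable element and passing to a domain quotient with fraction field $(L\otimes_k k')_{\mathrm{red}}$ is, by separable generation, finite \'etale over such a polynomial ring — is regular there. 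The genuinely delicate step is to transfer regularity of a basic open of $B\otimes_{A/\grp}A'$ back to $B$ itself: since regularity does not descend through an arbitrary finite purely inseparable extension, this must use the precise shape of $A'$ (and a $p$-power relation $(B\otimes_{A/\grp}A')^{p^e}\subseteq B$, valid after inverting a nonzero element of $A/\grp$) rather than a soft argument.

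I expect the main obstacle to be precisely this implication $(3)\Rightarrow(1)$, and within it the treatment of inseparability in characteristic $p$: openness of the regular locus fails for general Noetherian rings, so no unconditional criterion is at hand, and condition $(3)$ is exactly the hypothesis that substitutes, in this generality, for the Jacobian-criterion argument available over a perfect field. It is the counterpart, for the regular locus, of the lemma on openness of $\nor(X)$ established in Section~\ref{sec:21}, where the Serre criterion $(R_1)+(S_2)$ together with finiteness of associated primes settled the normal locus with no extra hypothesis on the base.
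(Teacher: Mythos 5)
First, a point of comparison: the paper does not prove Theorem~\ref{j2} at all --- it is quoted as background with the citation \cite[Theorem 73, p.~246]{matsumura:ca80} --- so there is no in-paper argument to match; your proposal has to be measured against the cited source. Your $(1)\Rightarrow(2)$ and $(2)\Rightarrow(3)$ are correct and are essentially Matsumura's (clear denominators to get a module-finite subring of $K'$ with fraction field $K'$; a domain which is J-1 is J-0 since the generic point is regular).

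The genuine gap is in $(3)\Rightarrow(1)$, in two places. (a) Your induction scheme does not close up: knowing $\reg(\Spec(B/fB))$ is open only controls part of $\reg(\Spec B)\cap V(f)$, because the criterion you invoke goes one way only --- $(B/fB)_\grq$ regular (with $f$ a nonzerodivisor) implies $B_\grq$ regular, but $B_\grq$ can be regular while $(B/fB)_\grq$ is not, so the points of $V(f)$ at which $B$ is regular are not captured by the inductive hypothesis for $B/fB$. The standard proof instead uses Nagata's topological openness criterion (a generization-stable set is open iff it contains a dense open of $V(\grq)$ for each of its points $\grq$), verified by spreading out a regular sequence generating $\grq B_\grq$ from the regular point $\grq$ and combining with J-0 of the finite type domain $B/\grq$ via ``$R/I$ regular and $I$ generated by a regular sequence $\Rightarrow R$ regular''; this reduces $(3)\Rightarrow(1)$ cleanly to the claim that every finite type $A$-domain is J-0, a reduction your sketch only approximates. (b) More seriously, the step you yourself call ``genuinely delicate'' --- transferring regularity of a basic open of $(B\otimes_{A/\grp}A')_{\mathrm{red}}$ back to $B$ --- is exactly the heart of the implication, and you do not supply it. The $p$-power inclusion you point to is not sufficient by itself: $k[t^p,t^{p+1}]\subset k[t]$ satisfies such an inclusion with the larger ring regular and the smaller one singular, so the descent really does require the specific differential/Jacobian-type analysis relative to the ring $A'$ furnished by $(3)$ (this is the content of the technical lemmas surrounding Theorem 73 in \cite{matsumura:ca80}). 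As it stands, then, your proposal correctly identifies the skeleton and the locus of difficulty, but the main implication $(3)\Rightarrow(1)$ is not proved.
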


\begin{defn} A Noetherian ring $A$ is {\it J-2} if it satisfies one
  of the equivalent conditions in Theorem~\ref{j2}.
\end{defn}

\begin{remark}
  The condition (3) of Theorem~\ref{j2} is satisfied if $A$ is a
  Nagata ring of dimension one. Indeed, $A/\grp$ is either a field or a
  Nagata domain of dimension one. In the first case, (3) is
  trivial. In the second case, the integral closure $A'$ of $A$ in
  $K'$ is finite over $A$ and is a regular ring. Therefore, any
  Nagata ring of dimension one is a J-2. On the other hand, we have
  Theorem~\ref{g2} (2). 
\end{remark}

We gather some properties of G-rings.

\begin{thm}\label{g1}\
  \begin{enumerate}
  \item Any complete Noetherian local ring is a G-ring. 
  \item If for any maximal ideal $\grm$ of a Noetherian ring $A$, the
    natural map $A_\grm\to (A_\grm)^*$ is regular, then $A$ is a
    G-ring
  \item Let $A$ and $B$ be Noetherian rings, and let $\phi:A\to B$ be
    a faithfully flat and regular homomorphism. If $B$ is J-1, then
    so is $A$.
  \item A semi-local G-ring is J-1.    
  \end{enumerate}
\end{thm}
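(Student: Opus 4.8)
The plan is to treat part~(1) as the fundamental ingredient, for which I would appeal to the literature, and to deduce parts~(2)--(4) from it together with the faithfully flat descent lemma \cite[Lemma~2, p.~251]{matsumura:ca80} recalled above. For part~(1) I would invoke \cite[\S\,33]{matsumura:ca80} (see also \cite{ega4:24}): by Cohen's structure theorem a complete Noetherian local ring $A$ is a homomorphic image of a power series ring $S=k[[T_1,\dots,T_n]]$ (equal characteristic case) or $S=V[[T_1,\dots,T_n]]$ with $V$ a complete discrete valuation ring (mixed characteristic case); one checks that such an $S$ is a G-ring, i.e.\ that its formal fibers are geometrically regular, and the G-ring property then passes to the quotient $A=S/I$ since homomorphic images of G-rings are G-rings. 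The geometric regularity of the formal fibers of $S$ is the technical heart of the statement, and I expect part~(1) to be the main obstacle; by contrast the other parts are soft.

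For part~(2), fix a maximal ideal $\grm$. By hypothesis $A_\grm\to(A_\grm)^*$ is regular, and it is always faithfully flat; since $(A_\grm)^*$ is a complete Noetherian local ring it is a G-ring by~(1), so $A_\grm$ is a G-ring by \cite[Lemma~2~(2), p.~251]{matsumura:ca80}. Every prime $\grp$ of $A$ lies in some maximal ideal $\grm$, and then $A_\grp=(A_\grm)_{\grp A_\grm}$ is a localization of the G-ring $A_\grm$, hence is itself a G-ring; in particular $A_\grp\to(A_\grp)^*$ is regular. Therefore $A$ is a G-ring.

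For part~(3), let $f\colon\Spec B\to\Spec A$ be the induced morphism. If $\grq\in\Spec B$ and $\grp:=f(\grq)$, then $A_\grp\to B_\grq$ is a local homomorphism which is faithfully flat and whose closed fiber is a localization of the geometrically regular $k(\grp)$-algebra $B\otimes_A k(\grp)$, hence again geometrically regular; thus $A_\grp\to B_\grq$ is a regular homomorphism, and by \cite[Lemma~2~(1), p.~251]{matsumura:ca80} $A_\grp$ is regular if and only if $B_\grq$ is. This shows $\reg(\Spec B)=f^{-1}(\reg(\Spec A))$. Since $f$ is faithfully flat and quasi-compact it is submersive, so $\reg(\Spec A)$ is open in $\Spec A$ if and only if $\reg(\Spec B)$ is open in $\Spec B$; the latter holds because $B$ is J-1, and hence $A$ is J-1.

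For part~(4), let $\grm_1,\dots,\grm_r$ be the maximal ideals of the semi-local G-ring $A$, and let $A^*$ be the completion of $A$ along its Jacobson radical, so that $A^*\cong\prod_{i=1}^{r}(A_{\grm_i})^*$, a finite product of complete Noetherian local rings. The canonical map $A\to A^*$ is faithfully flat, and it is also regular: its formal fiber over a prime $\grp$ is the finite product, over the indices $i$ with $\grp\subseteq\grm_i$, of the fibers $(A_{\grm_i})^*\otimes_{A_{\grm_i}}k(\grp)$, and each of these is geometrically regular because $A_{\grm_i}$, being a localization of the G-ring $A$, is a G-ring, so that $A_{\grm_i}\to(A_{\grm_i})^*$ is regular (apply the G-ring condition for $A_{\grm_i}$ at its maximal ideal). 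Each factor $(A_{\grm_i})^*$ is a complete Noetherian local ring, hence J-1 (indeed excellent), so $A^*$ is J-1; applying part~(3) to $A\to A^*$ then shows that $A$ is J-1.
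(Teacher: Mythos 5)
Your proposal is correct and amounts to the same route as the paper, which for this theorem simply cites Matsumura \cite{matsumura:ca80} (Theorems 68, 75, 76): your deductions of (2) from (1) via the descent lemma and localization, of (3) via $\mathrm{Reg}(\Spec B)=f^{-1}(\mathrm{Reg}(\Spec A))$ plus submersiveness of faithfully flat quasi-compact maps, and of (4) via the Jacobson-radical completion of a semi-local ring are exactly the standard arguments behind those citations. The only external inputs you lean on --- statement (1) itself and the fact that a complete Noetherian local ring is J-1 (quasi-excellent) --- are likewise facts the paper takes from the literature (cf.\ Example~\ref{excellent}(1)), so there is no circularity.
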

\begin{proof}
  (1) See \cite[Theorem 68, p.~225 and p.~250]{matsumura:ca80}. (2)
      See \cite[Theorem 75, p.~251]{matsumura:ca80}. (3) and (4) See 
      \cite[Theorem 76, p.~252]{matsumura:ca80}. 
\end{proof}

\begin{thm}\label{g2}\
  \begin{enumerate}
  \item Let $A$ be a G-ring and $B$ a finitely generated
    $A$-algebra. Then $B$ is a G-ring.
  \item Let $A$ be a G-ring which is J-2. Then $A$ is a Nagata ring.
  \end{enumerate}
\end{thm}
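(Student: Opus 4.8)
The plan is to reduce both parts to the structural theory of G-rings and Nagata rings developed by Nagata and Grothendieck (\cite[\S\,33--34]{matsumura:ca80}, \cite{ega4:24}); I indicate the arguments rather than merely citing them.

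For (1): since a finitely generated $A$-algebra is a quotient of some $A[X_1,\dots,X_n]=A[X_1][X_2]\cdots[X_n]$, and since quotients and localizations of G-rings are again G-rings (as observed after the definition of G-ring), an induction on the number of variables reduces us to proving that $A[X]$ is a G-ring whenever $A$ is. By Theorem~\ref{g1}(2) it suffices to check, for each maximal ideal $\grN$ of $A[X]$ with $\grp:=\grN\cap A$, that $A[X]_\grN\to (A[X]_\grN)^*$ is regular. The crux is to identify $(A[X]_\grN)^*$: the ring $A[X]_\grN$ is a localization of $A_\grp[X]$, and its completion is unchanged if one first replaces $A_\grp$ by $(A_\grp)^*$, so the formal fibres of $A[X]_\grN$ are those of $A_\grp$ with a polynomial variable adjoined and then base-changed along the (finitely generated, possibly inseparable) residue extension $\kappa(\grp)\to\kappa(\grN)$. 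Since $A$ is a G-ring the formal fibres of $A_\grp$ are geometrically regular, and geometric regularity is preserved both by adjoining a variable and by these base changes; combined with the stability of regular homomorphisms under composition and base change (\cite[\S\,33]{matsumura:ca80}) this gives that $A[X]_\grN$ has geometrically regular formal fibres. The main technical obstacle here is the bookkeeping in the identification of $(A[X]_\grN)^*$, in particular controlling the inseparability of $\kappa(\grN)/\kappa(\grp)$; this is precisely \cite[Theorem 77, p.~254]{matsumura:ca80}, which I would invoke.

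For (2): we must show $A/\grp$ is N-2 for every prime $\grp$ of $A$; as every finitely generated $A/\grp$-algebra is a finitely generated $A$-algebra, $A/\grp$ is again a G-ring (by (1)) and J-2, so we may assume $A$ is a domain and prove it is N-2. Given a finite extension $L$ of $K=\Frac A$, choose a finite $A$-subalgebra $B$ of the integral closure of $A$ in $L$ which is a domain with $\Frac B=L$ (adjoin integral elements generating $L$); then that integral closure is the normalization $B'$ of $B$, and $B'$ is finite over $A$ once it is finite over $B$. Now $B$ is a Noetherian domain, a G-ring (by (1), being finite over $A$) and J-1 (being a finite $A$-algebra over the J-2 ring $A$, by Theorem~\ref{j2}), so the whole statement reduces to showing that a Noetherian domain $B$ which is a G-ring and is J-1 is N-1. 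For this, being J-1 gives that $\reg(\Spec B)$ is open and contains the generic point, so $B_f$ is regular --- hence normal --- for some $0\ne f\in B$; by the criterion recalled above (openness of $\nor(\Spec B)$ once some localization is normal), the non-normal locus of $\Spec B$ is closed and, being proper, is cut out by a nonzero $g\in B$ with $B_g$ normal. On the other hand, for every prime $\grq$ the ring $B_\grq$ is a G-ring, so $B_\grq\to (B_\grq)^*$ is faithfully flat and regular; since $B_\grq$ is reduced, so is $(B_\grq)^*$ (\cite[Lemma 2, p.~251]{matsumura:ca80}), i.e.\ $B_\grq$ is analytically unramified. As $(B_\grq)^*$ is a complete Noetherian local ring it is Nagata (recalled above), so its normalization is module-finite over it; since $(B_\grq)'\otimes_{B_\grq}(B_\grq)^*$ embeds into that normalization (using reducedness of $(B_\grq)^*$), it is module-finite over $(B_\grq)^*$, and faithfully flat descent gives that $(B_\grq)'$ is module-finite over $B_\grq$. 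So $B$ is N-1 at every prime, the non-normal locus is closed, and $B_g$ is normal; a Noetherian induction on the non-normal locus $V(g)$ --- spreading out, via openness of the normal locus, the finitely many normalization generators produced over the generic points of the successive strata --- then yields a single finite $B$-subalgebra $D$ of $B'$ with $D_\grq=(B_\grq)'$ for all $\grq$, whence $D=B'$ and $B$ is N-1.

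I expect the genuine obstacle to be this last, global step --- upgrading ``$B$ is N-1 at every prime, with closed non-normal locus'' to module-finiteness of $B'$ over $B$ (local N-1 at every prime being strictly weaker than N-1). This is the heart of \cite[Theorem 78]{matsumura:ca80}, equivalently of the assertion that quasi-excellent rings are universally Japanese (\cite[7.8.3 (vi)]{ega4:24}), and in practice I would cite it outright; the reductions preceding it are routine.
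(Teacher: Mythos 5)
Your proposal is correct and ultimately rests on exactly the same sources as the paper, which proves both parts simply by citing \cite[Theorems 77 and 78]{matsumura:ca80}; your reductions (induction to $A[X]$ via Theorem~\ref{g1}(2) for (1), and the reduction of N-2 to ``G-ring $+$ J-1 $\Rightarrow$ N-1'' for (2)) are faithful outlines of the internal structure of those cited proofs. Since you correctly identify and defer to the genuinely hard steps (the formal-fibre bookkeeping in Theorem 77 and the globalization step in Theorem 78), there is nothing to add.
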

\begin{proof}
  (1) See \cite[Theorem 77, p.~254]{matsumura:ca80}. 
  (2) See \cite[Theorem 78, p.~257]{matsumura:ca80}. 
\end{proof}

\begin{thm}{\rm (Analytic normality of normal G-rings).} 
  Let $A$ be a G-ring and $I$ an ideal of $A$. Let $B$ be the $I$-adic
  completion of $A$. Then the canonical map $A\to B$ is
  regular. Consequently, if $B$ is normal (resp. regular,
  resp. Cohen-Macaulay, resp. reduced) so is $A$.
\end{thm}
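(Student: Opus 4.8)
The plan is to verify directly the two defining conditions of a regular homomorphism for the canonical map $A\to B$, where $B$ is the $I$-adic completion, and then to read off the descent of the four ring-theoretic properties from the cited descent lemma. Flatness of $A\to B$ is standard for the $I$-adic completion of a Noetherian ring, so the whole problem is to show that every fiber $B\otimes_A k(\grp)$, $\grp\in\Spec A$, is geometrically regular over $k(\grp)$. First I would fix $\grp$ and pass to $\ol A:=A/\grp$, a domain with fraction field $K=k(\grp)$; it is again a G-ring, since homomorphic images of G-rings are G-rings. Because completion commutes with passage to a quotient for Noetherian rings, $B\otimes_A\ol A=B/\grp B$ is the $\ol I$-adic completion $C:=\wh{\ol A}$ of $\ol A$, with $\ol I=(I+\grp)/\grp$, whence $B\otimes_A k(\grp)=C\otimes_{\ol A}K$. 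Geometric regularity over $K$ means that $C\otimes_{\ol A}K'$ is regular for every finite extension $K'/K$; discarding the trivial cases $\ol I=0$ and $\ol I=\ol A$, I am reduced to the claim that for a G-ring domain $\ol A$ with fraction field $K$, a proper nonzero ideal $\ol I$, and any finite $K'/K$, the ring $C\otimes_{\ol A}K'$ is regular.

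To prove the claim I check regularity of $C\otimes_{\ol A}K'$ at each of its primes (the ring is Noetherian, being finite over the localization $C\otimes_{\ol A}K$ of $C$). Such a prime $\grq'$ lies over a prime $\grq$ of $C$ with $\grq\cap\ol A=0$; I choose a maximal ideal $\grn\supseteq\grq$ of $C$ and set $\ol\grm=\grn\cap\ol A$, a maximal ideal of $\ol A$ containing $\ol I$. The key computation is that the $\grn$-adic completion of $C$ recovers the $\ol\grm$-adic completion of $\ol A$:
\[
\wh{(C)}^{\,\grn}=\wh{\ol A}^{\,\ol\grm}=(\ol A_{\ol\grm})^{*}=:R,
\]
which follows from the iterated-completion identity $C/\ol\grm^{\,n}C=\ol A/\ol\grm^{\,n}$, valid because $\ol\grm\supseteq\ol I$ (so $\grn=\ol\grm C$ and $\wh{C_\grn}=R$). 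As $\ol A$ is a G-ring, the formal map $\ol A_{\ol\grm}\to R$ is regular, hence its generic fiber is geometrically regular over $K$, and therefore $R\otimes_{\ol A}K'$ is a regular ring.

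Now the faithfully flat local map $C_\grn\to R$ remains faithfully flat after the base change $-\otimes_{\ol A}K'$ and localization; choosing a prime $\grQ'$ of $R\otimes_{\ol A}K'$ over $\grq'$ whose contraction to $\ol A_{\ol\grm}$ is $0$ (possible since $\grq\cap\ol A=0$ and completion is faithfully flat), I obtain a faithfully flat local homomorphism $(C\otimes_{\ol A}K')_{\grq'}\to(R\otimes_{\ol A}K')_{\grQ'}$ whose target, a localization of the regular ring $R\otimes_{\ol A}K'$, is regular. The faithfully flat descent of regularity (a Noetherian local ring admitting a faithfully flat local extension to a regular ring is regular — the relevant half of \cite[Lemma 2, p.~251]{matsumura:ca80}) then gives that $(C\otimes_{\ol A}K')_{\grq'}$ is regular. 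Since $\grq'$ was arbitrary, $C\otimes_{\ol A}K'$ is regular, proving the claim and hence that $A\to B$ is regular. I expect the comparison of completions and the bookkeeping of the primes $\grq\subseteq\grn$ and $\grQ'$ through the tower $C_\grn\to R$ to be the main obstacle, together with the point that one must use the \emph{bare} faithfully flat descent of regularity here: the map $C_\grn\to R$ is only known to be faithfully flat, not a priori a regular homomorphism.

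For the final assertion I invoke the descent lemma \cite[Lemma 2, p.~251]{matsumura:ca80} for the faithfully flat regular homomorphism $A\to B$: along such a map $A$ is normal (resp. regular, Cohen--Macaulay, reduced) if and only if $B$ is, so in particular the property passes from $B$ down to $A$. The one point to flag is faithful flatness of the $I$-adic completion, which holds precisely when $I$ lies in the Jacobson radical of $A$ — the situation in which analytic normality is applied (for instance $A$ local or semilocal with $I$ in the radical); in general the descent of these properties is valid on the image $\{\grp:\grp+I\neq A\}$ of $\Spec B\to\Spec A$.
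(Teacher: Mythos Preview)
Your argument is correct; the paper itself gives no proof but simply cites \cite[Theorem 79, p.~258]{matsumura:ca80}, and what you have written is precisely an unwinding of Matsumura's argument there --- the reduction modulo $\grp$, the iterated-completion identity $\wh{C_\grn}=(\ol A_{\ol\grm})^*$ for maximal ideals $\grn$ of the $\ol I$-adic completion, and the faithfully flat descent of regularity from $(R\otimes_{\ol A}K')_{\grQ'}$ are exactly the steps in the cited source. Your closing caveat about faithful flatness of $A\to B$ (needed for the ``Consequently'' clause and only guaranteed when $I$ lies in the Jacobson radical) is also well taken; note that the theorem's title suggests the intended application is the reverse implication (if $A$ is normal then $B$ is), for which flatness plus regular fibers already suffices.
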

\begin{proof}
  See \cite[Theorem 79, p.~258]{matsumura:ca80}.
\end{proof}

\subsection{Excellent rings}
\label{sec:33}

\begin{defn} \cite[\S\, 34, p.~259]{matsumura:ca80}.
Let $A$ be a Noetherian ring.
  \begin{enumerate}
  \item We say that $A$ is {\it quasi-excellent} if the following
    conditions are satisfied:
    \begin{itemize}
    \item [(i)] $A$ is a G-ring;
    \item [(ii)] $A$ is J-2.
    \end{itemize}
  \item We say that $A$ is {\it excellent} if it satisfies (i), (ii)
    and the following condition
    \begin{itemize}
    \item [(iii)] $A$ is universally catenary.
    \end{itemize}
  \end{enumerate}
\end{defn}

We recall the following \cite[p.~84]{matsumura:ca80}:

\begin{defn}\
  \begin{enumerate}
  \item A ring $A$ is said to be {\it catenary} if for any two prime
    ideals $\grp\subseteq \grq$, the relative height ${\rm ht}(\grq/\grp)$
    is finite and is
    equal to the length of any maximal chain of prime ideals between
    them.
  \item A Noetherian ring $A$ is said to be {\it universally catenary} if
    any finitely generated $A$-algebra is catenary.
  \end{enumerate}
\end{defn}

\begin{remark}\
\begin{enumerate}
\item Each of the conditions (i), (ii), and (iii) is stable under the
localization and passage to a finitely generated algebra
(Theorems \ref{j2} (1) and \ref{g2} (1)).
\item Note that (i), (ii), (iii) are conditions on $A/\grp$, $\grp\in
  \Spec A$. Thus a Noetherian ring $A$ is (quasi-)excellent if and
  only if $A_{\rm red}$ is so.
\item The conditions (i) and (iii) are of local nature (in the sense
  that if they hold for $A_\grp$ for all $\grp\in \Spec A$, then they
  hold for $A$), while (ii) is not. 
\item  Theorem~\ref{g2} (2) states that any quasi-excellent ring is a
  Nagata ring. 
\item It follows from Theorems~\ref{j2} and \ref{g1} (4) that any
  Noetherian local G-ring is quasi-excellent. 
\end{enumerate}
\end{remark}

\begin{eg}\label{excellent} \cite[\S\,34.B, p.~260]{matsumura:ca80}.  
  \begin{enumerate}
  \item Any complete Noetherian semi-local ring is excellent.
  \item Convergent power series rings over $\R$ or $\C$ are excellent.
  \item Any Dedekind domain $A$ of \ch zero is excellent. 
   %Why? Indeed, it follows from 

\end{enumerate}
\end{eg}

\begin{eg} \label{non-excellent} (cf. \cite[\S\,34.B,
  p.~260]{matsumura:ca80}).  

There exists a regular local domain of dimension one, that is, a
    discrete valuation ring, in \ch
    $p>0$ which is not excellent. Take a field $k$ of \ch $p>0$ with
    $[k:k^p]=\infty$. Put $R:=k[[t]]$ and let $A$ be the subring of $R$
    consisting of power series 
\[ \sum_{n=0}^\infty a_n t^n, \quad \text{with}\quad
[k^p(a_0,a_1,\dots): k^p]<\infty. \]
Then $A$ is a regular local ring of dimension one with uniformizer $t$
and the completion $A^*$ is equal to $R$. 
Let $K$ be the quotient field of $A$. 
The formal fiber of the natural map $A\to A^*=R$ at the generic point
is given by $K\to R[1/t]=k((t))$. Since $R^p\subset A$, 
the quotient field $k((t))$ of
$R$ is purely inseparable over $K$. Note that a field extension $K'$ over
a field $k'$ is geometrically regular if and only if $K'/k'$ is
separable. Therefore, $k((t))$ is not
geometrically regular over $K$. This shows that $A$ is not a G-ring.  

We show that $A$ is not a Nagata ring either. Suppose on the contrary 
that $A$ is a Nagata ring. Choose an element $c\in R - A$.  
Then $b:=c^p$ lies in $A$ and
the ring $A[c]$ is finite over $A$ and hence a Noetherian semi-local
ring. By Theorem~\ref{nagata}, $A[c]$ is again a Nagata ring. By
Theorem~\ref{an_unram}, the completion $A[c]^*$ should be reduced. However,
the completion
\[ A[c]^*=A^*\otimes_A A[c]=A^*[t]/(t^p-b)=A^*[t]/(t-c)^p \]
is not reduced, contradiction. 

Note that the ring $A[c]$ is not a
discrete valuation ring as it is not integrally closed. If let $L$ be the
quotient field of $A[c]$ and $B$ the integral closure of $A$ in
$L$, then $B$ is not a finite $A$-module. Indeed, write 
\[ c=a_0+a_1 t+ a_2 t^2+\cdots \]
and define, for $i\ge 0$, 
\[ c_i:=a_i+a_{i+1} t +a_{i+2} t^2+\cdots. \]
Then 
\[ c_i=a_i+t c_{i+1}, \quad \forall\, i\ge 0, \]
and each element $c_i$ is contained in $L$ and integral over $A$. 
We now have an increasing sequence of subrings in $L$ 
\[ A[c]=A[c_0]\subseteq A[c_0,c_1]=A[c_1]\subseteq
A[c_0,c_1,c_2]=A[c_2]\subseteq \cdots \]
which is not stationary as $c\not\in A$. This shows that $B$ is not a
finite $A$-module, and hence $A$ is not Japanese. \\    
\end{eg}

%  Question: let $B$ be the normalization of $A$ in $K[c]$. We know that
%  $B$ is again a Dedekind domain. Is $B$ a discrete valuation ring? That
%  is, is there only one maximal ideal lying over the maximal ideal of
%  $A$? \\
 
\begin{remark}
  We explain that if there exists a non-Japanese discrete valuation
  ring $A$, then $A$ is essentially of the form as in
  Example~\ref{non-excellent}. Let $k$ be the residue field of $A$,
  then the 
  completion $A^*$ of $A$, by Cohen's structure theorem for complete
  regular local rings \cite[Corollary 2, p.~205]{matsumura:ca80}, 
  is isomorphic to $k[[t]]=:R$ and one has $\char k=p>0$ (otherwise
  $A$ is an excellent ring; see Example~\ref{excellent} (3)). Since
  the natural map $A\to A^*$ is faithfully flat, we may regard $A$ as
  a dense subring of $R$, and may also choose $t$ as a 
  uniformizing element of $A$. Since the quotient
  field $k((t))$ should be inseparable over $K$, it is natural to
  expect that $A$ contains the subring $R^p=k^p[[t^p]]$. Since $A$
  contains $k$ and $t$, $A$ should contain the image $C$ of
  $k\otimes_{k^p} k^p[[t]]$ in $R$. If $[k:k^p]<\infty$, then
  $R=C\subseteq A$ and
  there is no such an example. So we have to assume
  $[k:k^p]=\infty$. Under this assumption, the image $C$ of
  $k\otimes_{k^p} k^p[[t]]$ in $R$ is exactly the example constructed 
  in Example~\ref{non-excellent}.   
\end{remark}

\subsection{Relation with the completion}
\label{sec:24}
Let $A$ be a local ring. We have
\[ \text{regular}\implies \text{normal}\implies \text{integral}
\implies \text{reduced.}
\]  
We discuss the relationship of $A$ with its completion $A^*$ through
these properties.

\begin{prop} \label{ff} 
Let $f:A \to B$ be a flat local homomorphism of local rings. 
  \begin{enumerate}
  \item If $B$ is reduced (resp. integral and integrally closed), then
    so is $A$. % reduced (resp. integral and integrally closed).
  \item If $B$ is a regular local ring, then so is $A$.
  \end{enumerate}
\end{prop}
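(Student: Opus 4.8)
The plan is to reduce everything to the fact that a flat local homomorphism of local rings is automatically \emph{faithfully} flat: since $f$ is local, $f(\grm_A)B\subseteq\grm_B\subsetneq B$, so the maximal ideal of $A$ does not expand to the unit ideal of $B$, and flatness of $f$ upgrades to faithful flatness. Two consequences will be used throughout: $f$ is injective, so $A$ is (identified with) a subring of $B$; and for every ideal $I\subseteq A$ one has $IB\cap A=I$, obtained by applying faithful flatness to $A/I\to B/IB$. The ``reduced'' clause of (1) is then immediate, since a subring of a reduced ring is reduced. If $B$ is a domain, then for $a_1,a_2\in A$ with $a_1a_2=0$ we get $f(a_1)f(a_2)=0$, forcing one of $a_1,a_2$ to vanish, so $A$ is a domain.

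For the ``integrally closed'' clause, let $B$ be a normal domain with fraction field $L$ and put $K=\Frac(A)$. As $f$ is injective and $B$ is a domain, nonzero elements of $A$ become units of $L$, so $f$ extends to an embedding $K\embed L$; I identify $K$ with its image. First I would establish $B\cap K=A$ inside $L$: for $x=a/s$ with $a,s\in A$, $s\neq 0$, the relation $x\in B$ reads $a\in sB\cap A=sA$, whence $x\in A$. Then, given $x\in K$ integral over $A$, it is integral over $B$ and lies in $L$, so $x\in B$ by normality of $B$, hence $x\in B\cap K=A$; thus $A$ is integrally closed in $K$.

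For (2), I would first note that $A$ is Noetherian — a regular local ring is Noetherian by definition, and an ascending chain $I_1\subseteq I_2\subseteq\cdots$ in $A$ produces the ascending chain $(I_nB)_n$ in the Noetherian ring $B$, which stabilizes, so the original chain stabilizes by $I_n=I_nB\cap A$. Put $k=A/\grm_A$. The standard flat base-change isomorphism for Tor (tensor an $A$-free resolution of $k$ with the flat $A$-algebra $B$, which stays a resolution) gives $\mathrm{Tor}^A_i(k,k)\otimes_A B\cong\mathrm{Tor}^B_i(B/\grm_A B,\,B/\grm_A B)$. Since $B$ is regular it has finite global dimension $\dim B$, so the right-hand side vanishes for $i>\dim B$; by faithful flatness $\mathrm{Tor}^A_i(k,k)$ itself vanishes for $i>\dim B$, so $\mathrm{pd}_A k<\infty$, and Serre's homological characterization of regular local rings shows $A$ is regular.

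The substantive point is (2): regularity is an a priori $B$-side homological finiteness condition, and the claim is that it descends along $f$ — the Tor computation combined with faithful flatness is exactly what effects this, the only subtlety being that one must also transport the Noetherian hypothesis to $A$. As an alternative, all three descent statements (reducedness, normality, and regularity along faithfully flat local maps) are recorded in Matsumura \cite{matsumura:ca80}, and one could simply cite them there.
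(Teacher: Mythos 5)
Your proof is correct and follows essentially the same strategy as the paper's: upgrade flat to faithfully flat using the local hypothesis, read off reducedness and integrality from the inclusion $A\subseteq B$, descend the Noetherian property via the ideal chains $I_nB$, and descend regularity through the flat base-change isomorphism for Tor together with Serre's homological criterion (the paper runs the Tor argument for all finite modules, you specialize to the residue field, which suffices since $A$ is already known to be Noetherian). The one point where you genuinely diverge is the normality clause of (1): you contract ideals, i.e.\ use $sB\cap A=sA$ to get $B\cap \Frac(A)=A$ inside $\Frac(B)$, and then invoke normality of $B$ directly; the paper instead considers the integral closure $A'$ of $A$ in $\Frac(A)$, observes $A'\subseteq B\cap\Frac(A)$, and concludes $A=A'$ by tensoring the inclusion $A\subseteq A'$ with $B$ and appealing to faithful flatness. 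Your route is a little more direct and avoids that tensor computation, but both arguments rest on the same faithfully flat descent principle, so the difference is one of packaging rather than of substance.
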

\begin{proof}
  (1) This is elementary; we keep a proof simply for the convenience
      of the reader. 
      Since $f$ is faithfully flat, we may regard $A$ as subring of
      $B$. Thus, $A$ is reduced or integral if $B$ is so. Assume $B$ is
      a normal domain. Let $L$ (resp. $K$) be the quotient field of
      $B$ (resp. $A$), and we have $K\subset L$. 
      Since $B$ is integrally closed, the integral
      closure $A'$ of $K$ is equal to $B\cap K$. Since $B$ has no
      non-zero $A$-torsion element, the natural map  $K\otimes_A B\to
      KB\subset L$ is injective. This shows
      that the map $A\otimes_A B\to A'\otimes_A B\cong B$ is an
      isomorphism. By faithful flatness, we get $A=A'$. 

\def\pd{{\rm proj.dim\, }}  
\def\gd{{\rm gl.dim\,}}
\def\Tor{{\rm Tor}}
  (2) We learned the proof from C.-L.~Chai. First, we show $A$ is
      Noetherian. Let $I_1\subset I_2\subset \cdots$ be an increasing
      sequence of ideals of $A$. Then $I_i\otimes_A B\cong I_iB$,
      $i=1,\dots,$ form an increasing sequence of ideals of $B$. As $B$
      is Noetherian and $B$ is faithfully flat over $A$, the (ACC)
      holds for $A$. 

      To show the regularity of $A$, we recall the following
      definitions and results. The {\it projective dimension} of a
      module $M$ over a ring $A$, denoted by $\pd M$, is 
      defined to be the length of a shortest
      projective resolution of $M$. 
      The {\it global dimension} of $A$, denoted by $\gd A$, 
      is defined to be 
      \[ \gd A:={\rm supp}_M\, \{\pd M \}, \]
      where $M$ runs through all
      $A$-modules, or equivalently all finite $A$-modules \cite[Lemma
      2, p.~128]{matsumura:ca80}. When $A$ is
      Noetherian, we have 
\[ \gd A\le n \iff
      \Tor_{n+1}^A(M,N)=0 \]
       for all finite $A$-modules $M$ and $N$ 
      \cite[Lemma 5, p.~130]{matsumura:ca80}.   
      A theorem of Serre states that a Noetherian local ring $A$ is
      regular if and only if the global dimension of $A$ is finite
      (cf. \cite[Theorem 45, p.~139]{matsumura:ca80}). Now we are ready
      to prove the regularity. Since $B$ is flat over $A$, the functor
      $B\otimes_A$ commutes with the Tor functors, and hence we have
      the canonical isomorphism
%       we have the comparison for change of base
\[ B\otimes_A \Tor^A_{i} (M,N)\cong \Tor^B_{i} (B\otimes_A
      M,B\otimes_A N) \]
for any $A$-modules $M$ and $N$. Since $B$ has finite global dimension,
the latter vanishes for all $M$ and $N$ when $i>\gd B$. By faithful
flatness, we have $\Tor^A_{i} (M,N)=0$ for $i>\gd B$ and hence $A$
has finite global dimension. By Serre's theorem, $A$ is regular. \qed
\end{proof}

\begin{cor}
  Let $f:Y\to X$ be a flat morphism of schemes. If $y$ is $f$ a point of
  of $Y$ and $O_y$ is reduced (resp. normal, resp. regular), then
  so is $O_{f(y)}$.
\end{cor}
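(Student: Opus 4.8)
The plan is to deduce this directly from Proposition~\ref{ff}, so the argument is essentially a translation between the language of schemes and that of local rings. First I would recall that a morphism of schemes $f\colon Y\to X$ is flat precisely when, for every $y\in Y$, the induced homomorphism of local rings $\mathcal{O}_{X,f(y)}\to\mathcal{O}_{Y,y}$ is flat. This homomorphism carries the maximal ideal of $\mathcal{O}_{X,f(y)}$ into the maximal ideal of $\mathcal{O}_{Y,y}$, hence it is a \emph{local} homomorphism; and a flat local homomorphism of local rings is automatically faithfully flat, because the image of the maximal ideal generates a proper ideal. Thus the hypotheses of Proposition~\ref{ff} are satisfied with $A=\mathcal{O}_{X,f(y)}$ and $B=\mathcal{O}_{Y,y}$.

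Next I would simply invoke the proposition. If $\mathcal{O}_{Y,y}$ is reduced, part~(1) gives that $\mathcal{O}_{X,f(y)}$ is reduced; if $\mathcal{O}_{Y,y}$ is regular, part~(2) gives that $\mathcal{O}_{X,f(y)}$ is regular. For the normal case I would note that a local ring which is normal (integrally closed in its total quotient ring) is necessarily an integral domain, since a normal ring is a finite product of normal domains and a local such product has a single factor. Hence $\mathcal{O}_{Y,y}$ normal means it is a normal domain, and part~(1) of the proposition then yields that $\mathcal{O}_{X,f(y)}$ is an integrally closed domain, i.e.\ normal.

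I expect no real obstacle here: all the content sits in Proposition~\ref{ff}, and the corollary merely records its geometric reformulation. The only points worth spelling out are that ``flat morphism'' unwinds to ``flat on stalks,'' that flatness together with locality of the stalk map forces faithful flatness, and the elementary remark that a normal local ring is a domain so that the ``normal'' assertion genuinely falls under part~(1).
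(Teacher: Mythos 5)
Your argument is correct and is exactly the intended one: the paper states this corollary without separate proof as the stalkwise reformulation of Proposition~\ref{ff}, and your unwinding (flat morphism means flat on stalks, a flat local homomorphism is faithfully flat, and a normal local ring is a domain so the normal case falls under part (1)) supplies precisely the routine details the paper leaves implicit.
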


\begin{lemma}
  Let $A$ be a local ring. Then $A$ is regular if and only if its
  completion $A^*$ is so.
\end{lemma}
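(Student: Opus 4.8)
The plan is to derive both implications from the properties of the completion map together with Proposition~\ref{ff}(2), which is already at our disposal. Recall that for a Noetherian local ring $(A,\grm)$ the canonical homomorphism $\iota\colon A\to A^*$ is a faithfully flat local homomorphism whose closed fibre $A^*/\grm A^*=A/\grm=:k$ is a field; hence $\grm^*:=\grm A^*$ is the maximal ideal of $A^*$ and $\dim A^*=\dim A$. Moreover $\grm$-adic completion does not alter the associated graded ring, since $A^*/(\grm^*)^n\cong A/\grm^n$ for every $n$, so $\mathrm{gr}_{\grm}(A)\cong \mathrm{gr}_{\grm^*}(A^*)$.

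For the implication ``$A^*$ regular $\implies A$ regular'' I would simply apply Proposition~\ref{ff}(2) to the flat local homomorphism $\iota\colon A\to A^*$: as $A^*$ is a regular local ring, $A$ is regular as well. In particular this direction requires nothing beyond what has already been proved.

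For the converse, assume $A$ is regular, hence Noetherian, and put $d:=\dim A=\dim A^*$. Then $A^*$ is a Noetherian local ring with maximal ideal $\grm^*=\grm A^*$, and a regular system of parameters $x_1,\dots,x_d$ of $A$ --- that is, $d$ generators of $\grm$ --- maps to a generating set of $\grm^*$ with $d$ elements; since $d=\dim A^*$, this shows $A^*$ is regular. Equivalently, one may argue symmetrically through graded rings: $A$ is regular iff $\mathrm{gr}_{\grm}(A)$ is a polynomial ring over $k$, iff $\mathrm{gr}_{\grm^*}(A^*)$ is, iff $A^*$ is regular, which disposes of both directions simultaneously.

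I do not anticipate any genuine difficulty here. The only points requiring a little care are the standard facts that $\grm A^*$ is the maximal ideal of $A^*$ and that $\dim A=\dim A^*$ (both immediate from faithful flatness of $\iota$ and the triviality of the closed fibre $A^*/\grm A^*=k$), and --- for the easy direction --- recognizing that the statement reduces verbatim to Proposition~\ref{ff}(2).
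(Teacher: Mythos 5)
Your proof is correct and follows essentially the same route as the paper: the direction ``$A^*$ regular $\implies A$ regular'' is obtained from Proposition~\ref{ff}(2) applied to the flat local map $A\to A^*$, and the converse comes down to the equalities $\dim A^*=\dim A$ and $\grm_{A^*}=\grm_A A^*$ (the paper phrases this as $\dim A^*=\dim A=\dim\grm_A/\grm_A^2=\dim\grm_{A^*}/\grm_{A^*}^2$, you phrase it via a regular system of parameters, which is the same count). No gap to report.
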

\begin{proof}
  The implication $\implies$ follows from 
\[ \dim A^*=\dim A=\dim \grm_A/\grm_A^2=\dim \grm_{A^*}/\grm_{A^*}^2,
\]
where $\grm_A$ (resp. $\grm_{A^*}$) is the maximal ideal of $A$
  (resp. $A^*$). The other implication follows from
  Proposition~\ref{ff} (2).\qed
\end{proof}

We now have the implications
\[ A^* \text{ is P} \implies A \text{ is P}. \]
where P is normal, integral, or reduced, and
\[ A^* \text{ is regular} \iff A \text{ is regular}. \]
without any condition for the local ring $A$. 

It is also well-known that the implication 
\[ A \text{ is an integral domain} \implies A^* \text{ is an integral
  domain} \]
is wrong even when $A$ is excellent. For example, take
$A=k[x,y]_{(x,y)}/(y^2-x^2-x^3)$, where $k$ is any field of \ch $p\neq
2$.  

When P is reduced or normal, the implications
\[ A^* \text{ is P} \implies A \text{ is P}, \]
need some conditions on $A$, for example, the morphism 
$\varphi: A\to A^*$ is reduced (resp. normal), i.e. it is flat and 
the fibers are geometrically reduced (resp. geometrically normal). 
As we have the implications
\[ \text{regular} \implies \text{normal} \implies \text{reduced} \]
for morphisms (this follows from the definition), 
we have the implications
\[ A \text{ is reduced (resp. normal)} \implies A^* \text{ is reduced
  (resp. normal)} \]
when $A$ is a G-ring.

Recall that a local ring $A$ is said to be {\it unibranched} if its
reduced ring $A_{\rm red}$ is an integral domain, and the
normalization $A'_{\rm red}$ of $A_{\rm red}$ is again local (EGA IV
\cite[23.2.1, p.~217]{ega4:20}). It is clear that a normal local domain
is unibranched. We know that the completion of a Noetherian local
normal domain may not be reduced. Is the reduced ring of its
completion still an integral domain, or even unibranched? We find in
\cite[E7.1, p.~210]{nagata:local} that Nagata constructed a
Noetherian local normal domain $A$ such that (1) its completion $A^*$
is reduced, and (2) $A^*$ is not an integral domain. Therefore, the
completion $A^*$ is not unibranched.

%\subsection{Resolution of singularities}
%\label{sec:34}

% Check with EGA IV.
% \section{Proof of Theorem~\ref{12}}

\section{Existence of maximal orders}
\label{sec:03}
In this section we give the proof of Theorem~\ref{12}. 

Let $R$ be a
Noetherian integral domain and $K$ its quotient field. Assume that $R$
is a Japanese ring. Let $A$ be a semi-simple algebra over $K$ and
$\Lambda$ be an $R$-order of $A$. Let $Z$ be the center of $A$ and
write
\begin{equation}
  \label{eq:31}
  Z=\prod_{i=1}^r Z_i 
\end{equation}
as a product of finite field extensions of $K$. This gives rise to a
decomposition of the semi-simple algebra 
\begin{equation}
  \label{eq:32}
  A=\prod_{i=1}^r A_i
\end{equation}
into simple factors, and each simple factor $A_i$ is central simple
over $Z_i$. Let $R'$ be the
integral closure of $R$ in $Z$. Then 
\begin{equation}
  \label{eq:33}
  R'=\prod_{i=1}^r R'_i,
\end{equation}
where $R'_i$ is the integral closure of $R$ in $Z_i$ for each $i$. 
Choose a system of generators $x_1,\dots, x_m$ of $\Lambda$ over
$R$ (as $R$-modules). Let $\Lambda'$ be the $R'$-submodule of $A$
generated by these 
$x_j$'s; clearly $\Lambda'$ is an $R'$-subring. 
Since $R$ is Japanese, the ring $R'$ is a finite
$R$-module. Then any $R'$-subring of $A$ is an $R'$-order if and only
if it is an $R$-order, in particular, $\Lambda'$ is an $R$-order 
containing $\Lambda$. The decomposition (\ref{eq:33}) gives rise to
the decomposition
\begin{equation}
  \label{eq:34}
  \Lambda'=\prod_{i=1}^r \Lambda'_i
\end{equation}
and each factor $\Lambda'_i$ is an $R'_i$-order in $A_i$. Since $A_i$ is
central simple over $Z_i$ and $R'_i$ is a Noetherian normal domain, by
Theorem~\ref{11} (cf. \cite[Corollary 10.4]{reiner:mo}) there exists a
maximal $R_i'$-order $\Lambda''_i$ of $A_i$ containing $\Lambda'_i$
for each $i$. Then the product 
\[ \Lambda'':=\prod_{i=1}^r \Lambda''_i \]
is a maximal $R'$-order and hence $R$-order of $A$ containing
$\Lambda$. This completes the proof of Theorem~\ref{12} (1). 

We show the second statement. Let $A$ be a finite field extension of
$K$. Any $R$-order in $A$ is contained in the
integral closure $R_A$ of $R$ in $A$. Let $\Lambda$ be a maximal
$R$-order in $A$. Then one has $\Lambda=R_A$, otherwise 
one can make a bigger
$R$-order $\Lambda[c]$ by adding an element $c\in R_A\setminus
\Lambda$.
%then a maximal $R$-order $\Lambda$ of $A$, if exists, must be
%equal to the
%integral closure $R_A$ of $R$ in $A$ because 
This shows that the integral closure $R_A$ is the unique  maximal
$R$-order in $A$. Thus, $R_A$ is a finite $R$-module. Therefore, the
ring $R$ is Japanese. This
completes the proof of Theorem~\ref{12}.    
 
% the given condition particularly implies that 
% for any finite field extension $L$ of $K$, the integral closure $R_L$
% of $R$ in $L$ is a finite $R$-module. Hence $R$ is Japanese. 

\section{Description of maximal orders}
\label{sec:04}
Keep the notation of Section~\ref{sec:01}.
In this section we attempted to describe maximal $R$-orders in $A$
when $R$ is either a Noetherian Japanese ring or an excellent ring.

\subsection{Reduction to normal domains}
\label{sec:41}

Let $\Lambda$ be an $R$-order of a semi-simple $K$-algebra $A$, where
$R$ is a Noetherian Japanese ring or an excellent domain. Let 
\[ Z=\prod_{i=1}^r Z_i, \quad A=\prod_{i=1}^r A_i, \quad \text{and\ }
R'=\prod_{i=1}^r R'_i \]
be as in Section~\ref{sec:03}. If $\Lambda$ is a maximal $R$-order,
then $\Lambda$ contains the subring $R'$, that gives rise to the
decomposition 
\[ \Lambda=\prod_{i=1}^r \Lambda_i, \]
and each factor $\Lambda_i$ is a maximal $R'_i$-order of
$A_i$. Conversely, if we are given a maximal $R'_i$-order $\Lambda_i$
of $A_i$ for each $i$, then the product $\prod_{i=1}^r \Lambda_i$ is a
maximal $R'$-order of $A$, and is also a maximal $R$-order of $A$ as
$R'$ is finite over $R$. 

Therefore, the description of maximal $R$-orders in $A$ can be reduced
to the case where $A$ is central simple over $K$ and $R$ is a Noetherian
normal domain, or an excellent normal domain if the initial ground ring
is excellent.

\subsection{Noetherian normal domain cases}
\label{sec:42}
Let $R$ be a Noetherian normal domain with quotient field $K$ and $A$
a central simple algebra over $K$. We recall the following results.

\begin{prop}
  An $R$-order $\Lambda$ in $A$ is maximal if and only if for each
  maximal ideal $\grm$ of $R$, the localization $\Lambda_\grm$ is a
  maximal $R_\grm$-order in $A$. 
\end{prop}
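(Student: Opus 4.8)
The plan is to prove the standard "maximality is local" statement for $R$-orders in a central simple algebra over a Noetherian normal domain, in both directions.

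For the forward direction, suppose $\Lambda$ is a maximal $R$-order and $\grm$ is a maximal ideal of $R$. I would first note that $\Lambda_\grm$ is an $R_\grm$-order in $A$: it is finitely generated over $R_\grm$, and it spans $A$ over $K$ since $\Lambda$ already does and localization does not shrink the $K$-span. Suppose for contradiction that $\Lambda_\grm$ is contained in a strictly larger $R_\grm$-order $\Gamma$. The idea is to descend $\Gamma$ to an $R$-order containing $\Lambda$. Pick finitely many generators $y_1,\dots,y_s$ of $\Gamma$ over $R_\grm$; each $y_j$ lies in $A$, and after clearing a common denominator $s\in R\setminus\grm$ we may assume each $y_j\in s^{-1}\Lambda$, hence the $R$-subalgebra $\Lambda' := R[\Lambda, y_1,\dots,y_s]$ is a finite $R$-module spanning $A$, i.e. an $R$-order. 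Then $\Lambda'\supseteq\Lambda$, and $\Lambda'_\grm\supseteq\Gamma\supsetneq\Lambda_\grm$, so $\Lambda'\supsetneq\Lambda$, contradicting maximality of $\Lambda$. (A slicker route: every $R$-order is contained in a maximal one by Theorem~\ref{11}(1), applied to the normal domain $R$ and the central simple — hence separable — algebra $A$; so $\Lambda$ maximal forces any order between $\Lambda$ and $\bigcap_{\grm}\Lambda_\grm$ to collapse. I would likely present the direct descent argument as the cleaner one.)

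For the converse, suppose $\Lambda_\grm$ is a maximal $R_\grm$-order for every maximal ideal $\grm$. Let $\Lambda'\supseteq\Lambda$ be any $R$-order of $A$. For each maximal ideal $\grm$, $\Lambda'_\grm$ is an $R_\grm$-order containing the maximal order $\Lambda_\grm$, hence $\Lambda'_\grm=\Lambda_\grm$. The key step is then the standard fact that a finitely generated $R$-submodule of a finite-dimensional $K$-vector space is determined by its localizations at all maximal ideals: $\Lambda = \bigcap_{\grm}\Lambda_\grm$ inside $A$ (here Noetherianness of $R$ and finiteness of $\Lambda,\Lambda'$ are used, together with $R=\bigcap_\grm R_\grm$). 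Since $\Lambda'_\grm=\Lambda_\grm$ for all $\grm$, we get $\Lambda'=\bigcap_\grm\Lambda'_\grm=\bigcap_\grm\Lambda_\grm=\Lambda$. Therefore $\Lambda$ admits no strictly larger order and is maximal.

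The only genuinely nontrivial ingredient is the local-global principle $\Lambda=\bigcap_\grm\Lambda_\grm$ for finitely generated modules, which I expect to be the main point to be careful about; it follows from the fact that for a finite $R$-module $M$ and submodule $N$, the inclusion $N\subseteq M$ is an equality iff $N_\grm=M_\grm$ for all maximal $\grm$ (apply to $M=\Lambda'$, $N=\Lambda$, or realize both inside a common finite module). Everything else — that localizations of orders are orders, that an order containing a maximal order in the same algebra equals it — is routine. I anticipate the paper simply cites this from Reiner \cite[Chapter~1]{reiner:mo}, since there it is proved for separable $A$ and a Dedekind (or normal Noetherian) $R$, exactly the present hypotheses.
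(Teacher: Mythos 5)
The paper gives no argument of its own here: it simply cites \cite[Corollary 11.2, p.~132]{reiner:mo}, so your proof has to stand on its own. Your converse direction does: if every $\Lambda_\grm$ is maximal and $\Lambda'\supseteq\Lambda$ is an $R$-order, then $\Lambda'_\grm=\Lambda_\grm$ for all $\grm$ and the local--global equality for finitely generated modules (or $\Lambda=\bigcap_\grm\Lambda_\grm$) gives $\Lambda'=\Lambda$. That half is correct and is the easy one.

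The forward direction, however, has a genuine gap. The step ``after clearing a common denominator $s\in R\setminus\grm$ we may assume each $y_j\in s^{-1}\Lambda$'' is, by the very definition $\Lambda_\grm=\{x\in A:\ sx\in\Lambda\ \text{for some}\ s\in R\setminus\grm\}$, exactly the assertion that $y_j\in\Lambda_\grm$; if it could be carried out it would force $\Gamma\subseteq\Lambda_\grm$, contradicting $\Gamma\supsetneq\Lambda_\grm$. In the only case of interest the denominators necessarily lie in $\grm$ (for instance $\Lambda=\Z+p\,\Mat_2(\Z)$ in $\Mat_2(\Q)$, $\grm=(p)$, $\Gamma=\Mat_2(\Z_{(p)})$: the generators $e_{ij}$ require denominator $p$). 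Moreover, even with arbitrary nonzero denominators the ring $\Lambda'=R[\Lambda,y_1,\dots,y_s]$ need not be a finite $R$-module: the $y_j$ are integral over $R_\grm$ but not necessarily over $R$, and a perfectly legitimate choice of $R_\grm$-generators such as $y_1=q^{-1}e_{11}$ (with $q$ a prime different from $p$, hence a unit in $\Z_{(p)}$) puts all $q^{-k}e_{11}$ into $\Lambda'$, so $\Lambda'$ is not an $R$-order and no contradiction with maximality of $\Lambda$ results. The parenthetical ``slicker route'' does not rescue this: since $\bigcap_\grm\Lambda_\grm=\Lambda$, saying that orders between $\Lambda$ and $\bigcap_\grm\Lambda_\grm$ collapse is vacuous and says nothing about $\Lambda_\grm$. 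The standard repair, which is the substance behind \cite[Corollary 11.2]{reiner:mo}, is module-theoretic descent: set $\Lambda':=\Gamma\cap\bigcap_{\grn\neq\grm}\Lambda_\grn$; choosing $0\neq r\in R$ with $r\Gamma\subseteq\Lambda_\grm$ one gets $r\Lambda'\subseteq\bigcap_\grn\Lambda_\grn=\Lambda$, so $\Lambda'\subseteq r^{-1}\Lambda$ is a finite $R$-module (Noetherianness of $R$ enters here) and hence an $R$-order containing $\Lambda$; the real work, missing from your sketch, is then to show $\Lambda'_\grm=\Gamma$, so that $\Gamma\supsetneq\Lambda_\grm$ yields $\Lambda'\supsetneq\Lambda$ and contradicts the maximality of $\Lambda$.
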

\begin{proof}
  See \cite[Corollary 11.2, p.~132]{reiner:mo}. 
\end{proof}\

We say that an $R$-order $\Lambda$ in $A$ is {\it reflexive} if the
inclusion $\Lambda\subseteq \Lambda^{**}$ is an equality, where 
\[ \Lambda^*=\Hom_R(\Lambda,R), \quad
\Lambda^{**}=\Hom_R(\Lambda^*,R). \]

\begin{thm}[Auslander-Goldman]\label{ag}
  An $R$-order $\Lambda$ is maximal if and only if $\Lambda$ is
  reflexive and for each minimal non-zero prime $\grp$ of $R$, the localization
  $\Lambda_\grp$ is a maximal $R_\grp$-order.
\end{thm}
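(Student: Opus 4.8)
The plan is to prove the Auslander--Goldman characterization of maximal orders over a Noetherian normal domain by combining the local criterion (the preceding Proposition) with the codimension-one structure of reflexive modules. First I would reduce to a local statement: by that Proposition, $\Lambda$ is maximal if and only if $\Lambda_\grm$ is a maximal $R_\grm$-order for every maximal ideal $\grm$ of $R$, so it suffices to show that for a Noetherian normal \emph{local} domain $R$, an order $\Lambda$ is maximal iff it is reflexive and $\Lambda_\grp$ is maximal for all height-one primes $\grp$. The key ring-theoretic input is that a Noetherian normal domain satisfies Serre's conditions $(R_1)$ and $(S_2)$, so $R=\bigcap_{\HT(\grp)=1} R_\grp$ inside $K$, and more importantly a finitely generated $R$-module $M$ is reflexive if and only if it satisfies $(S_2)$ as an $R$-module, equivalently $M=\bigcap_{\HT(\grp)=1}M_\grp$ (intersection taken inside $M\otimes_R K$). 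I would quote this standard fact (it is in Reiner, or follows from Matsumura), since the excerpt freely uses such normality criteria.

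Next I would prove the forward implication. Suppose $\Lambda$ is maximal. For each height-one prime $\grp$, localization is exact and preserves the property of being an order, and any $R_\grp$-order containing $\Lambda_\grp$ would, by the standard ``patching'' construction, yield an $R$-order containing $\Lambda$ (take the intersection over all height-one primes of the various localizations, which is again a finite $R$-module because it sits between $\Lambda$ and a fixed maximal order, the latter existing by Theorem~\ref{11}(1)); hence $\Lambda_\grp$ is maximal. For reflexivity, observe that $\Lambda^{**}$ is again an $R$-order in $A$ (it is a finite $R$-module since $R$ is Noetherian, it contains $\Lambda$, and it spans $A$ over $K$ because $\Lambda$ does and $\Lambda\otimes_R K=\Lambda^{**}\otimes_R K=A$), so maximality of $\Lambda$ forces $\Lambda=\Lambda^{**}$.

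For the converse, suppose $\Lambda$ is reflexive and $\Lambda_\grp$ is maximal for every height-one prime $\grp$. Let $\Gamma\supseteq\Lambda$ be any $R$-order in $A$; I want $\Gamma=\Lambda$. For each height-one prime $\grp$, $\Gamma_\grp$ is an $R_\grp$-order containing the maximal order $\Lambda_\grp$, hence $\Gamma_\grp=\Lambda_\grp$. Therefore
\[
\Lambda\subseteq\Gamma\subseteq\bigcap_{\HT(\grp)=1}\Gamma_\grp=\bigcap_{\HT(\grp)=1}\Lambda_\grp=\Lambda,
\]
where the last equality is reflexivity of $\Lambda$ expressed via the $(S_2)$/intersection criterion, and the first intersection contains $\Gamma$ because $\Gamma$, being a finite module over the normal domain $R$, is contained in its $(S_2)$-ification, or more directly because $\Gamma$ is torsion-free so it injects into $\Gamma\otimes_R K=A$ and each element lying in every $\Gamma_\grp$. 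Hence $\Gamma=\Lambda$ and $\Lambda$ is maximal.

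The main obstacle is the precise homological bookkeeping in identifying ``reflexive'' with ``equal to the intersection of its height-one localizations'' and checking that $\Lambda^{**}$ is genuinely an $R$-order (finiteness, and that it is a subring of $A$ closed under multiplication — this last point uses that $\Lambda$ is a ring and that $\Hom_R(-,R)$ applied twice is compatible with the multiplication maps, or alternatively that $\Lambda^{**}=\bigcap_\grp\Lambda_\grp$ is visibly a subring). Everything else is formal localization and the patching argument, for which the existence of \emph{some} maximal order from Theorem~\ref{11}(1) guarantees all the intermediate modules are finite over $R$. I would present the intersection criterion as a cited lemma to keep the argument short, then give the three-line forward and converse arguments above.
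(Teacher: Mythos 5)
Your proposal is, in substance, the classical Auslander--Goldman argument: note that the paper offers no proof of this statement beyond the citation of Reiner, Theorem 11.4, and the proof there runs along exactly the lines you describe --- identify reflexive lattices over a Noetherian normal domain with the intersection of their localizations at the height-one primes, use that localization preserves maximality for the forward direction, and obtain the converse by trapping any order $\Gamma\supseteq\Lambda$ via $\Gamma\subseteq\bigcap_{\grp}\Gamma_\grp=\bigcap_{\grp}\Lambda_\grp=\Lambda$. Your converse and your proof that a maximal order is reflexive (observing that $\Lambda^{**}=\bigcap_\grp\Lambda_\grp$ is again an $R$-order containing $\Lambda$) are correct as stated, granted the intersection lemma you cite; the preliminary reduction to local $R$ is harmless but not actually needed.

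The one step I would not accept as written is the finiteness claim inside your patching argument for the forward direction. You form $\Gamma:=\Gamma'\cap\bigcap_{\grq\neq\grp}\Lambda_\grq$ for an $R_\grp$-order $\Gamma'\supseteq\Lambda_\grp$ and justify its finiteness over $R$ by saying it ``sits between $\Lambda$ and a fixed maximal order'': that is circular, since containment in some maximal $R$-order is only guaranteed for $R$-orders, i.e.\ after finiteness is known, and there is no a priori reason this intersection lies in any prescribed maximal order. The claim itself is true and easily repaired: choose $0\neq c\in R$ with $c\Gamma'\subseteq\Lambda_\grp$ (possible because $\Gamma'$ is a finite $R_\grp$-module inside $K\Lambda$); then $c\Gamma\subseteq\Lambda_\grp\cap\bigcap_{\grq\neq\grp}\Lambda_\grq=\Lambda^{**}$, so $\Gamma\subseteq c^{-1}\Lambda^{**}$ is a submodule of a finite module over the Noetherian ring $R$. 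You should also make explicit that $\Gamma_\grp=\Gamma'$ (otherwise $\Gamma=\Lambda$ does not yet force $\Gamma'=\Lambda_\grp$); this follows because any $x\in A$ lies in $\Lambda_\grq$ for all but finitely many height-one $\grq$, and the finitely many bad primes, being distinct from $\grp$, can be cleared by a suitable power of an element of their product not lying in $\grp$. Alternatively, this whole direction is Reiner's Theorem 11.1 (localizations of maximal orders are maximal) and could simply be cited, just as you cite the reflexive-intersection lemma.
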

\begin{proof}
  See \cite[Theorem 11.4, p.~133]{reiner:mo}. 
\end{proof}\

Using Theorem~\ref{ag}, we may even reduce the description of maximal orders
to the case where $R$ is a discrete valuation ring. By the following
theorem, we may even pass to their completions.  

\begin{thm}\label{43}
  Assume that $R$ is an excellent local normal domain. Let $\wh R$ be the
  completion of $R$ and $\wh K$ the quotient field of $\wh R$. Let
  $\Lambda$ be $R$-order in $A$, and set
\[ \wh \Lambda:=\wh R \otimes_R \Lambda, \quad 
\wh A:=\wh K \otimes_K A,  \]
so $\wh \Lambda$ is an $\wh R$-order in $\wh A$. Then $\Lambda$ is a
maximal $R$-order in $A$ if and only if $\wh \Lambda$ is a maximal
$\wh R$-order in $\wh A$. 
\end{thm}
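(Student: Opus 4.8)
\textbf{Proof proposal for Theorem~\ref{43}.}
The plan is to exploit the faithful flatness and, crucially, the regularity of the completion map $\varphi: R\to \wh R$, which is available because $R$ is excellent (hence a $G$-ring, so $\varphi$ is a regular homomorphism, and $\wh R$ is again a normal local domain by Lemma~\cite[Lemma 2, p.~251]{matsumura:ca80}). First I would reduce to the discrete valuation ring case: by the Auslander--Goldman criterion (Theorem~\ref{ag}), $\Lambda$ is maximal iff it is reflexive and $\Lambda_\grp$ is maximal for every height-one prime $\grp$; the analogous statement holds for $\wh\Lambda$. So it suffices to check that (a) reflexivity of $\Lambda$ over $R$ is equivalent to reflexivity of $\wh\Lambda$ over $\wh R$, and (b) for height-one primes the maximality is preserved in both directions under completion.

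For (a), reflexivity is a condition expressed via the functors $\Hom_R(-,R)$ applied twice; since $\wh R$ is flat and finitely presented objects are involved ($\Lambda$ is a finite $R$-module), $\Hom$ commutes with the base change $-\otimes_R\wh R$, so $\wh\Lambda^{**}\cong \widehat{\Lambda^{**}}$, and $\Lambda\hookrightarrow\Lambda^{**}$ is an equality iff its cokernel vanishes iff the cokernel vanishes after the faithfully flat base change to $\wh R$. For (b), the height-one local rings $R_\grp$ are discrete valuation rings (as $R$ is normal Noetherian), and the point is to match up the height-one primes of $\wh R$ lying over a given height-one prime $\grp$ of $R$ with the places of $\wh K$ extending the $\grp$-adic place of $K$; here I would invoke that $\varphi$ is regular to guarantee that the fibre $\wh R\otimes_R k(\grp)$ is regular, so that $\wh R_{\grp'}$ is again a DVR for every minimal prime $\grp'$ over $\grp\wh R$, and that $\Spec\wh R\to\Spec R$ sends height-one primes to height-one primes (flatness plus the dimension formula). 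Then the statement reduces to: over a DVR $\mathcal O$ with completion $\wh{\mathcal O}$ and a central simple algebra $B$ over $\Frac\mathcal O$, an $\mathcal O$-order is maximal iff its completion is a maximal $\wh{\mathcal O}$-order --- and this is the classical local statement, essentially Theorem~\ref{11}(2) combined with the fact that maximal orders over a DVR are characterized by the discriminant (or by being hereditary), a property visibly insensitive to completion since $B\otimes\wh{\mathcal O}$ has the same local invariants.

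The main obstacle I anticipate is the matching of height-one primes and the behaviour of the \emph{center}: after completion $\wh A = \wh K\otimes_K A$ need not be central simple even though $A$ is, because $\wh K$ may not be a field --- $\wh R$ is a normal domain so $\wh K$ is a field, but one must still track that $Z(\wh A)=\wh K$ remains the right thing and that maximality is checked factor-by-factor. Concretely, I would first treat the case $\dim R=1$ (DVR) cleanly using the classical theory, then bootstrap to general excellent normal $R$ by the Auslander--Goldman reduction, taking care that ``for each minimal non-zero prime'' on the $\wh R$-side is controlled by the regular fibres of $\varphi$. The reflexivity half (a) is soft; all the real content sits in the height-one/DVR comparison, where excellence is exactly what makes the formal fibres regular and keeps normality and the DVR property intact under completion.
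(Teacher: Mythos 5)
Your part (a) (reflexivity and flat base change) and the descent direction ($\wh\Lambda$ maximal $\Rightarrow\Lambda$ maximal, by faithful flatness) are fine, and for comparison note that the paper offers no argument at all here: its proof is a citation of \cite[Theorem 11.5]{reiner:mo}, with excellence added only to guarantee that $\wh R$ is again a normal domain so that $\wh K$ is a field and $\wh A$ is central simple. But your step (b), which carries all the content of the hard direction, has a genuine gap, in fact two. First, the claim that $\Spec\wh R\to\Spec R$ sends height-one primes to height-one primes is false once $\dim R\ge 2$: the generic formal fibre has positive dimension, so there are height-one primes $\grP\subset\wh R$ with $\grP\cap R=0$ (these can be handled, e.g. $\wh\Lambda_\grP=\wh R_\grP\otimes_K A$ is Azumaya, but you must say so). Second, and more seriously, for $\grP$ lying over a height-one prime $\grp$ of $R$, the DVR $\wh R_\grP$ is \emph{not} the completion of $R_\grp$: the residue field jumps from $k(\grp)$ to $k(\grP)$, so the classical ``maximality is insensitive to completion over a DVR'' statement does not apply, and your parenthetical ``$B\otimes\wh{\mathcal O}$ has the same local invariants'' is precisely the unproved point.

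It is not a removable technicality: maximality of an order over a DVR is \emph{not} preserved under flat extensions of DVRs with $e=1$ and separable residue extension, because the algebra can become less ramified over the larger field, and then the discriminant of the transported order is too big. Concretely, take $R=\Q[x,y]_{(x,y)}$ (excellent, regular, local, normal), $K=\Q(x,y)$, and the quaternion algebra $A=(1-x,\,y)$ over $K$. At $\grp=(y)$ the residue field is $\Q(x)$ and $1-x$ is not a square there, so $A$ is ramified at $\grp$ and a maximal $R_\grp$-order has reduced discriminant $(y)$; but $1-x$ is a square in $\Q[[x]]\subset\Frac\,\Q[[x,y]]=\wh K$, so $\wh A\cong\Mat_2(\wh K)$ and maximal $\wh R_\grP$-orders have trivial reduced discriminant, while the base change of the maximal $R_\grp$-order still has reduced discriminant $(y)\wh R_\grP$ and hence is not maximal. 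So the prime-by-prime ascent you propose breaks down exactly at this point (and the example shows the difficulty is intrinsic to the higher-dimensional statement, not to your bookkeeping). The only case in which your reduction closes up is $\dim R=1$, where $\wh R_\grP$ really is the completion of $R_\grp$ and the residue field is unchanged --- which is the DVR case the paper actually needs and the case its quoted source covers cleanly; for the general statement you would need a genuinely different argument (or added hypotheses), not the completion-invariance you invoke.
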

\begin{proof}
  This is \cite[Theorem 11.5, p.~133]{reiner:mo}. 
  % necessary to %Note\footnote{hallo}. 
\end{proof}\

% We point it out that in loc.~cit. no excellence assumption is 

Note that the assumption of excellence for $R$ is not stated in
 \cite[Theorem 11.5]{reiner:mo}. 
That would cause a problem as the completion of $R$ may not 
be a domain (see Nagata \cite[Appendix]{nagata:local} for the examples). 
For the special case where $R$ is a
discrete valuation ring, the original statement holds, 
as the completion $\wh R$ is again a discrete valuation ring. See also
Subsection~\ref{sec:24} for more details about the relationship of a
Noetherian local ring  with its completion. 

% We also point it out that Theorem 4.4 in \cite[p.~45]{reiner:mo} is
% incorrect as stated. One should assume that the field $L$ is {\it
%   separable} over $K$, not just arbitrary finite extension, or that
% $R$ is a Japanese Dedekind domain, or an excellent Dedekind domain.   

% I should not say that the truth is the integral closure S of R in L is
% still Dedekind ring but S is not necessarily finite over R.

%   However, without
%   this assumption the completion of $R$ may not be reduced, and hence
%   the statement may make no sense. In the special case where $R$ is a
%   discrete valuation ring, the completion $\wh R$ is again a discrete
%   valuation ring so the assumption of excellence may be dropped as 
%   in loc.~cit. 

%\subsection{Complete discrete valuation ring cases}
%\label{sec:43}

%\subsection{Discrete valuation rings cases}
%\label{sec:44}

\subsection{Complete discrete valuation ring cases}
\label{sec:43}
 Let $R$ be a complete discrete valuation ring with the unique maximal
 ideal $P=\pi R \neq 0,$ $K$ its quotient field, and
 $\overline{R}=R/P.$ Let $A$ be a central simple $K$-algebra, and $V$
 be a minimal left 
 ideal of $A.$ Set $D:=\Hom_A(V,V).$
 Then $D$ is a division algebra, by Schur's Lemma, whose center is
 equal to $K$. The minimal left ideal $V$ naturally forms a 
right $D$-vector space, and one has 
$A=\Mat_r(D)$, where $r:=\dim_D V$. Let $v$ be the normalized
 $P$-adic valuation on 
 $K$, that is, $v(\pi)=1$. Let $N_{D/K}$ be the reduced norm
 on $D$ and define 
\[ w(a):=[D:K]^{-1/2} v(N_{D/K}(a)),  \ a \in D. \]

% and $T_{D/K}$ be the usual norm and trace maps 
% defined by $$\mathrm{char}. \mathrm{pol}._{D/K} \ a=X^{m}-(T_{D/K} \
%  a)X^{m-1}+\cdots + (-1)^m N_{D/K} \ a , \ a \in D.$$ 

It is easy to see the following (see \cite[Theorem 12.8,
  p.~137]{reiner:mo}, \cite[Theorem 12.10,
  p.~138]{reiner:mo} and \cite[Theorem 13.2, 
  p.~139]{reiner:mo}).

\begin{lemma} \label{44}\
\begin{enumerate}
\item The valuation $w$ is the unique extension of $v$ to $D$ and the
  ring of integers 
$$ \Delta =\{a
\in D : \ w(a)\geq 0\} $$
is the unique maximal $R$-order in $D$.

\item Let $\pi_D$ be a prime element of $\Delta,$ and set $\wp
    =\pi_D \Delta.$ Then every non-zero one-sided ideal of $\Delta$ is
    a 
 two-sided ideal, and is a power of $\wp.$ The residue class ring
 $\ol \Delta:=\Delta/\wp$ is again a division algebra over the field
 $\overline{R}$,  and $\wp \cap R=P.$ 
\end{enumerate}
\end{lemma}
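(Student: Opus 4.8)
\textbf{Proof plan for Lemma~\ref{44}.}

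The plan is to reduce everything to standard valuation theory over the complete discrete valuation ring $R$. For part (1), I would start from the fact that $D$ is a division algebra with center $K$, finite-dimensional over $K$, say $[D:K]=n^2$. The function $w(a)=n^{-1}v(N_{D/K}(a))$ is clearly multiplicative since $N_{D/K}$ is multiplicative and $v$ is additive; the only nontrivial point is the non-archimedean inequality $w(a+b)\ge\min(w(a),w(b))$. For this the standard trick is to reduce to showing that $w(a)\ge 0$ implies $w(1+a)\ge 0$, i.e. that $\Delta=\{a: w(a)\ge 0\}$ is closed under addition; and this follows because $a$ with $w(a)\ge 0$ is integral over $R$ (its reduced characteristic polynomial has coefficients in $R$, by looking at their valuations via the Newton polygon / the fact that $v\circ N_{K(a)/K}$ is a valuation on the field $K(a)$ and $R$ is integrally closed), so $\Delta$ is precisely the integral closure of $R$ in $D$, which is a ring. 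Uniqueness of the extension of $v$ and uniqueness of $\Delta$ as a maximal order then come together: any $R$-order $\Lambda$ consists of elements integral over $R$, hence $\Lambda\subseteq\Delta$, so $\Delta$ is the unique maximal order; and any valuation extending $v$ must have valuation ring contained in (hence equal to, by maximality among valuation rings) $\Delta$, forcing it to be $w$ up to the normalization.

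For part (2), I would argue that $\Delta$ is a noncommutative discrete valuation ring: the value group $w(D^\times)$ is a discrete subgroup of $\frac1n\Z$ containing $\Z=v(K^\times)$, hence of the form $\frac1e\Z$ for some $e\mid n$, and a prime element $\pi_D$ is any element with $w(\pi_D)=1/e$. Then $\wp=\pi_D\Delta=\Delta\pi_D$ (the two-sidedness because $w$ is a ``central'' valuation: $w(x\pi_Dx^{-1})=w(\pi_D)$ forces $x\pi_Dx^{-1}\in\wp$ for all $x$, and more directly $\wp=\{a:w(a)>0\}=\{a:w(a)\ge1/e\}$ is visibly a two-sided ideal described by the valuation). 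Every nonzero one-sided ideal $I$ has a nonzero element of minimal value $k/e$; dividing by $\pi_D^k$ shows $I=\pi_D^k\Delta$, a power of $\wp$; in particular every one-sided ideal is two-sided. That $\wp\cap R=P$ is immediate since $w|_K=v$ and $w(\pi_D)>0$ while the smallest positive value on $K$ is $v(\pi)=1$, so $\wp\cap R=\{x\in R: v(x)>0\}=P$. Finally $\ol\Delta=\Delta/\wp$ is a ring with no zero divisors (if $ab\in\wp$ then $w(a)+w(b)\ge1/e$, and since $w$ takes values in $\frac1e\Z$, either $w(a)\ge1/e$ or $w(b)\ge1/e$), it is finite-dimensional over $\ol R=R/P$, hence a finite-dimensional division algebra over $\ol R$.

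I do not expect any serious obstacle here: this is classical local theory and the excerpt explicitly points to \cite[Theorems 12.8, 12.10, 13.2]{reiner:mo} for all three assertions. The one place requiring genuine care is the triangle inequality for $w$ in part (1), equivalently that $\Delta$ coincides with the integral closure of $R$ in $D$; the cleanest route is to note that for $a\in D$ the subfield $K(a)$ is a finite extension of $K$, $v$ extends uniquely to $K(a)$ (as $R$ is a complete DVR) by a valuation $v_a$, and $w(a)=\frac{[K(a):K]}{n}v_a(a)$ up to a constant, so $w|_{K(a)}$ is (proportional to) the unique valuation on $K(a)$; patching these over all $a$ gives both multiplicativity and the ultrametric inequality, and integral closedness of $R$ gives $\Delta\cap K(a)=$ the ring of integers of $K(a)$. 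Everything else is bookkeeping with value groups.
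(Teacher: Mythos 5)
Your sketch is, in substance, the same route the paper intends: the paper gives no argument of its own for this lemma but simply cites Reiner (Theorems 12.8, 12.10, 13.2), and what you outline is the standard proof behind those citations. The individual reductions are sound: multiplicativity of $w$ from the reduced norm; the ultrametric inequality by restricting to the commutative subfield $K(a)$, where $v$ extends uniquely because $R$ is complete (in fact, with the normalization $v_a|_K=v$ one has $w|_{K(a)}=v_a$ exactly, since $N_{D/K}(a)=N_{K(a)/K}(a)^{\,n/[K(a):K]}$, so your ``up to a constant'' is an equality); the identification of $\Delta$ with the integral closure of $R$ in $D$; and the value-group bookkeeping in (2), including $\wp=\{a: w(a)>0\}$ being two-sided and every one-sided ideal being $\wp^k$ via an element of minimal value.

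The one substantive step you leave out is why $\Delta$ is an $R$-order at all, i.e.\ why it is a \emph{finite} $R$-module. ``Every $R$-order consists of integral elements, hence lies in $\Delta$'' only gives uniqueness of the maximal order once you know $\Delta$ itself is an order; without finiteness one could a priori have no maximal order, or maximal orders properly contained in $\Delta$. The same finiteness is what makes $\ol\Delta$ finite-dimensional over $\ol R$, which your ``no zero divisors $+$ finite-dimensional'' argument in (2) uses. The standard ways to supply it: use completeness of $K$ (all norms on the finite-dimensional $K$-space $D$ are equivalent, so $\{a: w(a)\ge 0\}$ is bounded and hence contained in a full $R$-lattice, and $R$ is Noetherian), or use that the central simple algebra $D$ is separable over $K$, so the reduced trace form is nondegenerate and $\Delta$ is contained in the dual of any full lattice it contains. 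Incidentally, for $\ol\Delta$ being a division ring you can bypass dimension counting altogether: any $a\in\Delta\setminus\wp$ has $w(a)=0$, hence $w(a^{-1})=0$, so $a$ is already a unit of $\Delta$ and its class is invertible in $\ol\Delta$; finiteness over $\ol R$ is then needed only to call it a (finite-dimensional) division algebra over $\ol R$.
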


% =\{a \in D : \ N_{D/K}a \in R.\}$$

%\begin{lemma}Define a
% valuation on $D$:
% $$w(a)=m^{-1} \cdot v(N_{D/K}a), \ a \in D $$ and take $$\Delta =\{a
% \in D : \ w(a)\geq 0\}=\{a \in D : \ N_{D/K}a \in R.\}$$ Then 
%  \begin{enumerate}
%   \item The valuation  and
%     $\Delta$ is the unique maximal $R$-order in $D,$ and is the 
% integral closure of $R$ in $D.$
%   \item Let $\pi_D$ be a prime element of $\Delta,$ and set $\wp
%     =\pi_D \Delta.$ Then every non-zero one-sided ideal of $\Delta$ is
%     a 
%  two-sided ideal, and is a power of $\wp.$ The residue class ring
%  $\Delta/\wp$ is a division algebra over the field $\overline{R},$ 
%  and $\wp \cap R=P.$
%  \end{enumerate}
%\end{lemma}
%\begin{proof}
 
%\end{proof}

% Now we have seen that the powers $\{\wp^m \}$ give all nonzero
%  one-sided ideals of $\Delta,$ and that these ideals are 
%  also two-sided ideals. 
%We set $\overline{\Delta}=\Delta/\wp,$ a
% division algebra of finite dimension over the field 
% $\overline{R}.$ We show that $M_r(\Delta),$ and its conjugates, give all
%  possible maximal $R$-orders in $A=M_r(D).$\\
Lemma~\ref{44} describes the maximal orders $\Lambda$ in $D$ (in fact
$\Lambda$ is unique) and all
ideals of $\Lambda$. Now we look at the central simple case.

\begin{thm}\label{45} \
\begin{enumerate}
\item Let $\Lambda =\Mat_r(\Delta).$ Then $\Lambda$ is a maximal
  $R$-order in $A,$ and has a unique maximal two-sided ideal $\pi_D 
\Lambda.$ The powers $$(\pi_D \Lambda)^m=\pi_D^m \Lambda, \ \ \
m=0,1,2,\ldots,$$ give all of the nonzero two-sided ideals of 
$\Lambda.$
 \item Every maximal $R$-order in $A$ is of the form $u \Lambda
   u^{-1}$ for some unit $u \in A,$ and each such ring is a maximal 
$R$-order.
 \item Every maximal order $\Lambda^{\prime}$ is left and right
   hereditary, and each of its one-sided ideals is principal. The 
unique maximal two-sided ideal of $u \Lambda u^{-1}$ is $u\cdot \pi_D
\Lambda \cdot u^{-1}.$ 
\item Let $\Lambda$ be any maximal $R$-order in $A$. Then there exists
  a full free $\Delta$-lattice $M$ in $V$ such that
  $\Lambda=\Hom_{\Delta}(M,M).$ Conversely, each such $\Lambda$ is
  maximal. 
\end{enumerate}
\end{thm}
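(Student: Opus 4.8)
The plan is to transport the structure of $\Delta$ as the unique maximal order in $D$ (Lemma~\ref{44}) through the Morita equivalence between $D$ and $A = \Mat_r(D)$. First I would verify directly that $\Lambda = \Mat_r(\Delta)$ is an $R$-order in $A$: it is finitely generated over $R$ since $\Delta$ is, and it spans $A$ over $K$ since $\Delta$ spans $D$. For maximality, one checks that any element of $A$ integral over $R$ and normalizing (or merely contained in an order containing) $\Lambda$ must have all entries in $\Delta$; the cleanest route is the characterization via the reduced trace pairing, or alternatively to invoke part (4) proved first and note $\Mat_r(\Delta) = \Hom_\Delta(\Delta^r, \Delta^r)$ for the standard lattice $M = \Delta^r$. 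The two-sided ideal structure in (1) then follows from Lemma~\ref{44}(2): a two-sided ideal of $\Mat_r(\Delta)$ corresponds to a two-sided ideal of $\Delta$, and these are exactly the powers $\wp^m = \pi_D^m \Delta$, giving $\pi_D^m \Lambda$.

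For part (2), the key input is that any two full $\Delta$-lattices in $V$ are related by an element of $A^\times = \GL_r(D)$, together with part (4): given a maximal order $\Lambda'$, write it as $\Hom_\Delta(M,M)$ for a free $\Delta$-lattice $M$, pick a $\Delta$-basis identifying $M$ with $\Delta^r = M_0$, and let $u \in \GL_r(D) = A^\times$ be the change-of-basis element sending $M_0$ to $M$; then $\Lambda' = u \Lambda u^{-1}$. Conversely, conjugation by a unit is an $R$-algebra automorphism of $A$, so it carries the maximal order $\Lambda$ to a maximal order. Part (3) is then formal: $\Lambda = \Mat_r(\Delta)$ is hereditary because $\Delta$ is a (maximal hence) hereditary order — every one-sided ideal of $\Delta$ is a power of $\wp$, hence projective — and heredity, together with the principality of one-sided ideals, is preserved under Morita equivalence $D \leadsto \Mat_r(D)$; the statement about the maximal two-sided ideal of $u\Lambda u^{-1}$ is immediate from (1) by conjugating.

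The heart of the matter is part (4), and this is where I expect the main obstacle. One direction is easy: for a full free $\Delta$-lattice $M \subset V$, the ring $\Hom_\Delta(M,M)$ is an $R$-order in $\End_D(V) = A$, and choosing a basis identifies it with $\Mat_r(\Delta)$, which is maximal by (1). The substantive direction is that \emph{every} maximal $R$-order $\Lambda$ of $A$ arises this way. Here I would set $M := \{ x \in V : \Delta \cdot x \subseteq M \text{ and } x \in \Lambda \cdot v_0 \}$ — more precisely, pick any full $\Delta$-lattice $L$ in $V$, so $\Gamma := \Hom_\Delta(L,L)$ is a maximal order; by (2) applied to $\Gamma$ (which we may prove first in the special form $\Gamma = \Mat_r(\Delta)$), we get $\Lambda = u \Gamma u^{-1}$ for some $u \in A^\times$, and then $M := uL$ is a full $\Delta$-lattice with $\Lambda = \Hom_\Delta(M,M)$. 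Freeness of $M$ follows because $\Delta$ is a noncommutative discrete valuation ring (all one-sided ideals are the principal ideals $\wp^m$), so every finitely generated torsion-free, hence projective, $\Delta$-module is free — this is the analogue of the structure theorem for modules over a PID and uses that $\Delta$ is a principal ideal ring by Lemma~\ref{44}(2). The only circularity to watch is the logical ordering of (1)–(4): I would prove (1) first by the trace-pairing argument (independent of the others), deduce the special case of (2) for orders of the form $\Mat_r(\Delta')$, then (4), then the general (2), then (3).
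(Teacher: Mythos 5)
Your overall plan (transporting Lemma~\ref{44} through $A\cong\Mat_r(D)$, proving freeness of full $\Delta$-lattices, and deducing (1)--(3) formally) is the right one and matches the source the paper itself relies on --- the paper gives no argument at all here, it simply cites \cite[Theorem 17.3 and Corollary 17.4]{reiner:mo}. But as written your proposal is circular at exactly the one substantive point. You prove the nontrivial half of (2) (an arbitrary maximal order $\Lambda'$ equals $u\Lambda u^{-1}$) by first writing $\Lambda'=\Hom_\Delta(M,M)$, i.e.\ by quoting the nontrivial half of (4); and you prove the nontrivial half of (4) by invoking (2) to get $\Lambda'=u\Gamma u^{-1}$ with $\Gamma=\Hom_\Delta(L,L)$. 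The ``special case of (2) for orders of the form $\Mat_r(\Delta')$'' that your final ordering inserts does not break the loop: it only shows that orders \emph{already known} to be of the form $\Hom_\Delta(M,M)$ are conjugate to one another (all full $\Delta$-lattices being free), whereas the whole content of (2)/(4) is that an \emph{arbitrary} maximal order is of this form. Your one attempt at a direct construction of $M$ from $\Lambda'$, namely ``$M:=\{x\in V:\Delta\cdot x\subseteq M \text{ and } x\in\Lambda\cdot v_0\}$'', is self-referential and is then abandoned, so this step is genuinely missing.

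The missing step is short and is the heart of Reiner's Theorem 17.3. Given a maximal $R$-order $\Lambda'$ in $A=\End_D(V)$ (with $A$ acting on $V$ on the left and $D$, hence $\Delta$, on the right), choose any full right $\Delta$-lattice $L\subset V$ and put $M:=\Lambda'\cdot L$. Since the left $A$-action commutes with the right $D$-action, $M$ is again a full right $\Delta$-lattice, finitely generated over $R$, and $\Lambda'$-stable; hence $\Lambda'\subseteq\Hom_\Delta(M,M)$, and the latter is an $R$-order in $A$, so the maximality of $\Lambda'$ \emph{alone} forces $\Lambda'=\Hom_\Delta(M,M)$ --- no prior conjugacy statement and no prior knowledge that $\Hom_\Delta(M,M)$ is maximal is needed. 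Your freeness argument (every one-sided ideal of $\Delta$ is $\wp^m=\pi_D^m\Delta$, so full $\Delta$-lattices are free) then converts this into (2), and (1), (3) follow as you say. One smaller repair: the reduced-trace pairing by itself does not give the maximality of $\Mat_r(\Delta)$ in (1), since the trace-dual of $\Mat_r(\Delta)$ is $\Mat_r$ of the inverse different, which is in general strictly larger. The standard argument instead multiplies by matrix units: for an order $\Gamma\supseteq\Lambda$ and $x=(x_{kl})\in\Gamma$ one has $x_{kl}e_{11}=e_{1k}\,x\,e_{l1}\in e_{11}\Gamma e_{11}$, which is an $R$-order of $D$ containing $\Delta$, hence equals $\Delta e_{11}$ by Lemma~\ref{44}(1); thus $\Gamma\subseteq\Mat_r(\Delta)$. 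With these two repairs your outline becomes essentially the argument of \cite[Theorem 17.3, Corollary 17.4]{reiner:mo} that the paper cites.
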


\begin{proof}
  See \cite[Theorem 17.3, p.~171]{reiner:mo} and \cite[Corollary 17.4,
  p.~172]{reiner:mo}.  \\
%and Subsection~\ref{sec:51} for the definition
%  of hereditary rings.\\ 
\end{proof}

See Subsection~\ref{sec:51} for the definition of hereditary rings.
Note that in Theorem~\ref{45} (4) any $\Delta$-lattice $M$ in $V$
  is free automatically, and hence any two full $\Delta$-lattices in
  $V$ are isomorphic. Therefore, the statements (2) and (4) in
  Theorem~\ref{45} are equivalent. For more general ground rings, 
  the analogue of (4) is weaker than that of (2) in general; see
  also Theorem~\ref{56}.
   
\subsection{Discrete valuation rings cases}
\label{sec:44}
\def\rad{{\rm rad}\,}

 Keep the notations as in
 Subsection~\ref{sec:43}, but the ground ring $R$ now is only assumed to
 be a discrete valuation ring. Let $\wh R$, $\wh K$, $\wh A$ be the
 same as in Theorem~\ref{43}.  
 If $\Lambda$ is an $R$-order in $A,$ then
 $\widehat{\Lambda}:=\widehat{R}\otimes_R \Lambda$ is again 
 $\widehat{R}$-order $\widehat{\Lambda}$ in $\widehat{A}$, and it is
 maximal if and only if so is $\Lambda$.

% Let $\widehat{R}$ denote the $P$-adic
%  completion of $R,$ and $\widehat{K}$ the quotient field of
%  $\widehat{R}.$ We know that $\widehat{A}:=\widehat{K}\otimes_K A$ is
%  a central simple $\widehat{K}$-algebra since $A$ is a central simple
%  $K$-algebra. 
%  By
%  Theorem~\ref{43}, the $R$-order $\Lambda$ is maximal 
%  if and only 
% if so is $\widehat{\Lambda}$. % a maximal order in $\widehat{A}.$\\

\begin{thm}
Let $\Lambda$ be a maximal $R$-order in a central simple $K$-algebra
$A.$ Then $\Lambda$ has a unique maximal two-sided ideal 
$\mathfrak{P},$ given by $\mathfrak{P}= \Lambda \cap \rad 
\widehat{\Lambda}.$ Then $\rad \Lambda = \mathfrak{P},$ and
every nonzero two-sided ideal of 
$\Lambda$ is a power of $\mathfrak{P}.$ Further, $\rad
\widehat{\Lambda}$ is the $P$-adic completion of $\rad
\Lambda.$ 
\end{thm}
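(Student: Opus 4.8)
The plan is to reduce to the complete case and then invoke the structure theory of Theorem~\ref{45}. First I would recall from Subsection~\ref{sec:44} that $\wh\Lambda=\wh R\otimes_R\Lambda$ is a maximal $\wh R$-order in the central simple $\wh K$-algebra $\wh A$, so by Theorem~\ref{45}(1),(3) the ring $\wh\Lambda$ has a unique maximal two-sided ideal, namely $\rad\wh\Lambda$, and every nonzero two-sided ideal of $\wh\Lambda$ is a power of it. One must first check that $\wh A$ is still central simple over $\wh K$: this holds because $\wh K\otimes_K A$ is central simple whenever $A$ is (base change of central simple algebras preserves centrality and simplicity, the Brauer class being unramified enough for a DVR completion), so Theorem~\ref{45} applies verbatim to $\wh\Lambda$. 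I would set $\mathfrak P:=\Lambda\cap\rad\wh\Lambda$ and aim to show it is the unique maximal two-sided ideal of $\Lambda$, that $\rad\Lambda=\mathfrak P$, that every nonzero two-sided ideal of $\Lambda$ is a power of $\mathfrak P$, and that $\rad\wh\Lambda$ is the $P$-adic completion of $\rad\Lambda$.

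The key technical point is the faithfully flat descent of two-sided ideals along $\Lambda\to\wh\Lambda$. Since $\wh R$ is faithfully flat over $R$ and $\Lambda$ is a finite $R$-module, for any two-sided ideal $I$ of $\Lambda$ we have $\wh R\otimes_R I=\wh\Lambda I=:\wh I$, a two-sided ideal of $\wh\Lambda$, and $I=\Lambda\cap\wh I$; conversely every two-sided ideal of $\wh\Lambda$ arises this way from its contraction because both $\Lambda$ and $\wh\Lambda$ are Noetherian and completion is exact on finite modules. This sets up an inclusion-preserving bijection between the nonzero two-sided ideals of $\Lambda$ and those of $\wh\Lambda$; combined with the fact from Theorem~\ref{45} that the latter are exactly the powers $(\rad\wh\Lambda)^m$, we get that the nonzero two-sided ideals of $\Lambda$ are exactly $\mathfrak P^m$ for $m\ge 0$, where $\mathfrak P=\Lambda\cap\rad\wh\Lambda$ is the unique maximal one. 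One also checks $\widehat{\mathfrak P^m}=(\rad\wh\Lambda)^m$ and hence that $\rad\wh\Lambda$ is the $P$-adic completion of $\rad\Lambda$, using that $P\wh\Lambda=\rad\wh\Lambda^{\,e}$ for the ramification index $e$ so that the $P$-adic and $\rad$-adic topologies on $\wh\Lambda$ agree, and similarly on $\Lambda$.

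It remains to identify $\mathfrak P$ with $\rad\Lambda$. For this I would argue that $\Lambda/\mathfrak P$ injects into $\wh\Lambda/\rad\wh\Lambda$, which is a (semi)simple Artinian ring by Theorem~\ref{45}; but $\Lambda/\mathfrak P$ is a nonzero two-sided-ideal-free quotient — indeed the bijection above shows $\mathfrak P$ is the unique maximal two-sided ideal of $\Lambda$ — so $\Lambda/\mathfrak P$ is simple Artinian, hence $\mathfrak P\supseteq\rad\Lambda$; conversely $\mathfrak P$ is nilpotent modulo $P\Lambda$ and $P\subseteq\rad\Lambda$ since $R$ is local, so $\mathfrak P\subseteq\rad\Lambda$. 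The main obstacle I anticipate is the descent statement for two-sided ideals — specifically verifying that contraction and extension are mutually inverse on two-sided ideals, i.e. that $\wh\Lambda\cdot(\Lambda\cap\wh I)=\wh I$, which requires a little care with faithful flatness and the fact that $\wh\Lambda/\Lambda\otimes\wh R$ arguments behave well for finite modules; everything downstream is then a formal consequence of Theorem~\ref{45}.
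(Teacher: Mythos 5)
Your proposal follows the same route as the proof the paper points to (Reiner, Theorem 18.3): base change to the completion, apply the structure theory of Theorem~\ref{45} to the maximal order $\wh\Lambda$ in $\wh A$, and descend the ideal theory through the correspondence between two-sided ideals of $\Lambda$ and of $\wh\Lambda$. One aside is a red herring: $\wh A=\wh K\otimes_K A$ is central simple over $\wh K$ for purely formal reasons (central simplicity is preserved by arbitrary base field extension); no condition of being ``unramified enough'' on the Brauer class is involved or needed.

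The one substantive soft spot is exactly the step you flagged, and the justification you offer for it does not suffice. Faithful flatness and exactness of completion on finite modules do give $I=\Lambda\cap\wh I$ for a two-sided ideal $I\subseteq\Lambda$, but they do not give the opposite direction $\wh\Lambda\cdot(J\cap\Lambda)=J$ for a two-sided ideal $J\subseteq\wh\Lambda$, and that is the direction your argument actually uses (both to conclude that every nonzero two-sided ideal of $\Lambda$ is a power of $\mathfrak P$ and to identify $\rad\wh\Lambda$ with the completion of $\rad\Lambda$). The correct input is: a nonzero two-sided ideal $J$ of $\wh\Lambda$ satisfies $\wh K J=\wh A$ because $\wh A$ is simple, so $J$ is a full $\wh R$-lattice in $\wh A$; over a discrete valuation ring the maps $N\mapsto\wh N$ and $M\mapsto M\cap A$ are mutually inverse, inclusion-preserving bijections between full $R$-lattices in $A$ and full $\wh R$-lattices in $\wh A$ (Reiner, Theorem 5.2), and since $J\subseteq\wh\Lambda$ one has $J\cap A=J\cap\Lambda$, a two-sided ideal of $\Lambda$ whose completion is $J$. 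Alternatively you can bypass the general lattice correspondence: by Theorem~\ref{45} the ideal $\wh{\mathfrak P}$ lies between $P\wh\Lambda$ and $\rad\wh\Lambda$, hence equals $(\rad\wh\Lambda)^j$ for some $j\ge 1$; density of $\Lambda$ in $\wh\Lambda$ gives $\rad\wh\Lambda=\mathfrak P+P^n\wh\Lambda\subseteq\wh{\mathfrak P}+(\rad\wh\Lambda)^2$ for $n$ large, and Nakayama forces $\wh{\mathfrak P}=\rad\wh\Lambda$, from which the ideal correspondence for the powers follows. With this point supplied, the rest of your argument --- $\rad\Lambda=\mathfrak P$ via $\Lambda/\mathfrak P$ simple Artinian together with $\mathfrak P$ nilpotent modulo $P\Lambda\subseteq\rad\Lambda$, and the $P$-adic completion statement --- goes through as you wrote it.
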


\begin{proof}
  See \cite[Theorem 18.3, p.~176]{reiner:mo}.\\
\end{proof}

The following provides a criterion for an $R$-order to be maximal.

\begin{thm}[Auslander-Goldman]
 Let $\Lambda$ be an $R$-order in the central simple $K$-algebra $A.$
 Then $\Lambda$ is maximal if and only if $\Lambda$ is 
 hereditary, and $\rad\Lambda$ is its unique maximal two-sided ideal.
\end{thm}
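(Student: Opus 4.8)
The plan is to reduce the equivalence to the case of a \emph{complete} discrete valuation ring via the base change $\Lambda\mapsto\wh\Lambda:=\wh R\otimes_R\Lambda$ (with $\wh R,\wh K,\wh A$ as in Subsection~\ref{sec:44}), and then to read it off from the structure theory of Subsections~\ref{sec:43}--\ref{sec:44}. First I would establish three compatibilities of this faithfully flat base change. (a) $\Lambda$ is a maximal $R$-order in $A$ iff $\wh\Lambda$ is a maximal $\wh R$-order in $\wh A$ (opening of Subsection~\ref{sec:44}). (b) $\rad\wh\Lambda=\wh R\otimes_R\rad\Lambda$, hence $\wh\Lambda/\rad\wh\Lambda\cong\Lambda/\rad\Lambda$: since $P\Lambda\subseteq\rad\Lambda$ and $\Lambda/P\Lambda$ is finite over the field $R/P$, some power of $\rad\Lambda$ lies in $P\Lambda$, so $\wh R\otimes_R\rad\Lambda$ is a two-sided ideal of $\wh\Lambda$ a power of which lies in $\wh P\wh\Lambda\subseteq\rad\wh\Lambda$; conversely $\wh R\otimes_R(R/P)=\wh R/\wh P=R/P$, so $\wh\Lambda/(\wh R\otimes_R\rad\Lambda)\cong\Lambda/\rad\Lambda$ is semisimple. (c) $\Lambda$ is hereditary iff $\wh\Lambda$ is: if $\wh\Lambda$ is hereditary, base-change a one-sided ideal $\gra\subseteq\Lambda$, note $\wh R\otimes_R\gra$ is a one-sided ideal of $\wh\Lambda$, hence flat, and descend flatness along the faithfully flat map $\Lambda\to\wh\Lambda$ (a finitely generated flat module over the Noetherian ring $\Lambda$ is projective); conversely, $\Lambda$ hereditary makes $\rad\Lambda$ a projective $\Lambda$-module, so by (b) its base change $\rad\wh\Lambda$ is projective over $\wh\Lambda$, whence $\wh\Lambda$ is hereditary — here I use the characterization of hereditary orders over a discrete valuation ring by projectivity of the radical, as recorded in Subsection~\ref{sec:51} (cf.\ \cite{reiner:mo}). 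I would also note that every maximal two-sided ideal of $\Lambda$ contains $\rad\Lambda$ (a proper two-sided ideal lies in a maximal left ideal, and each maximal left ideal contains $\rad\Lambda$, so adjoining $\rad\Lambda$ to a maximal two-sided ideal cannot enlarge it); thus ``$\rad\Lambda$ is the unique maximal two-sided ideal'' is equivalent to ``$\Lambda/\rad\Lambda$ is simple'', a condition preserved in both directions by (b).

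For the implication $\Lambda$ maximal $\Rightarrow$ $\Lambda$ hereditary with $\rad\Lambda$ its unique maximal two-sided ideal: the statement about the radical is exactly the content of the theorem immediately preceding the present one (Reiner's \cite[Theorem~18.3]{reiner:mo}), and $\Lambda$ is hereditary because $\wh\Lambda$ is a maximal $\wh R$-order by (a), hence hereditary by Theorem~\ref{45}(3), hence $\Lambda$ is hereditary by (c).

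For the converse, suppose $\Lambda$ is hereditary with $\rad\Lambda$ its unique maximal two-sided ideal. By (a), (b), (c) and the last remark of the first paragraph, $\wh\Lambda$ has the same two properties, so I may assume $R$ complete and adopt the notation of Subsection~\ref{sec:43} ($A\cong\Mat_r(D)$, $\Delta$ the maximal order of $D$, $\wp=\pi_D\Delta$, $r=\dim_D V$). Now I would invoke the classification of hereditary $R$-orders in $A$ over a complete discrete valuation ring: such an order is, up to conjugacy, a ``standard'' hereditary order attached to a composition $(n_1,\dots,n_\ell)$ of $r$, with semisimple quotient $\Lambda/\rad\Lambda\cong\prod_{i=1}^{\ell}\Mat_{n_i}(\ol\Delta)$. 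Simplicity of $\Lambda/\rad\Lambda$ forces $\ell=1$, i.e.\ $\Lambda$ is conjugate to $\Mat_r(\Delta)$, which is maximal by Theorem~\ref{45}(2). (Alternatively, without the classification: a maximal overorder $\Gamma\supseteq\Lambda$ is a full $\Lambda$-lattice in $A$, hence projective over the hereditary ring $\Lambda$; combining this with the invertibility of $\rad\Lambda$ and the ensuing fact that every nonzero two-sided ideal of $\Lambda$ is a power of $\rad\Lambda$ forces $\Gamma=\Lambda$.)

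The main obstacle is the converse in the complete case, where one genuinely needs the structure theory of hereditary orders — either their classification into standard forms, or the package consisting of the invertibility of the radical of a hereditary order, the description of its nonzero two-sided ideals as powers of the radical, and the projectivity of a maximal overorder — none of which appears among the statements reproduced above, though all of it belongs to the theory recalled in Subsection~\ref{sec:51} and developed in Chapters~5 and~9 of \cite{reiner:mo}. The only other nontrivial point is the ``ascent'' half of (c), which I would handle through the projectivity of the radical as indicated; everything else is bookkeeping with Theorems~\ref{43} and~\ref{45} and Lemma~\ref{44}.
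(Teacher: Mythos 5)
The paper offers no argument here at all: its ``proof'' is the single line ``See \cite[Theorem 18.4, p.~176]{reiner:mo}'', so there is no in-paper route to compare yours against; what follows judges your outline on its own terms. The reduction scaffolding is sound. Your (a) is the statement opening Subsection~\ref{sec:44}, your (b) is the standard computation $\mathrm{rad}\,\widehat\Lambda=\widehat R\otimes_R\mathrm{rad}\,\Lambda$ (correct as you argue it, via nilpotence of $\mathrm{rad}(\Lambda/P\Lambda)$ and semisimplicity of the quotient), and the descent half of (c) is the usual faithfully flat argument. Two caveats: the ``hereditary iff the radical is projective'' criterion you lean on for the ascent half of (c) is \emph{not} recorded in Subsection~\ref{sec:51}, which only defines hereditary rings and proves Lemma~\ref{submod}; it is Reiner's Theorem 39.1 and must be cited as such. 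More seriously, the decisive step --- the converse over a complete discrete valuation ring --- is exactly the part you do not prove: the classification of hereditary orders in $\Mat_r(D)$ into standard block forms with $\Lambda/\mathrm{rad}\,\Lambda\cong\prod_i\Mat_{n_i}(\ol\Delta)$ is Reiner's \S 39 structure theory, which is far heavier than the theorem being proved, so invoking it, while not circular, is disproportionate, and as written both it and your alternative are asserted rather than established.

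Your parenthetical alternative is in fact the better route, is essentially the classical Auslander--Goldman argument, and once fleshed out it needs neither the classification nor even the passage to the completion. The one genuine input is that for a hereditary order the two-sided ideal $N:=\mathrm{rad}\,\Lambda$ is invertible with left and right order equal to $\Lambda$ (Reiner, \S 37--39; this is the whole content of the converse and should be cited or proved, not just mentioned). Granting it, uniqueness of the maximal two-sided ideal plus invertibility of $N$ gives by Noetherian induction that every full two-sided ideal of $\Lambda$ contained in $\Lambda$ is a power of $N$. Now for any order $\Gamma\supseteq\Lambda$ the conductor $\gra:=\{x\in A: x\Gamma\subseteq\Lambda\}$ is such an ideal, say $\gra=N^k$, and it is visibly a right $\Gamma$-module, so $N^k\Gamma\subseteq N^k$; since $N$ is invertible, $\{x\in A: N^kx\subseteq N^k\}=\Lambda$, whence $\Gamma\subseteq\Lambda$ and $\Lambda$ is maximal. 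Note that projectivity of $\Gamma$ as a $\Lambda$-lattice, which your sketch invokes, plays no role in this argument. The forward direction is fine as you have it (the preceding theorem in Subsection~\ref{sec:44} for the radical statement, and Theorem~\ref{45}(3) plus your (c), or directly Theorem~\ref{48}(1), for hereditariness).
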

\begin{proof}
  See \cite[Theorem 18.4, p.~176]{reiner:mo}.
\end{proof}

\begin{thm}\label{48}
Let $\Lambda$ be a maximal $R$-order in a central simple $K$-algebra $A.$
\begin{enumerate}
\item The ring $\Lambda$ is left and right hereditary.
\item Let $M$ and $N$ be left $\Lambda$-lattices. Then $M \cong N$ if
  and only if $M$ and $N$ have the same rank. 
\item Every one-sided ideal of $\Lambda$ is principal.
\item Every maximal $R$-order in $A$ is a conjugate $u\Lambda u^{-1}$
  of $\Lambda,$ where $u$ is a unit of $A.$ 
\item Let $\widehat{A}=\widehat{K}\otimes A \cong \Mat_t(E),$ where $E$
  is a division algebra with center $\widehat{K},$ and let $\Omega$ 
  be the unique maximal $\widehat{R}$-order in $E.$ Then
  $$\Lambda/\rad\Lambda \cong \Mat_t(\Omega/\rad\Omega)$$ and 
  $\Omega/\rad\Omega$ is a division algebra.
\end{enumerate}
\end{thm}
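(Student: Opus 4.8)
The plan is to reduce everything to the complete case treated in Theorem~\ref{45}, using the faithful flatness of the completion $R \to \wh R$ together with the basic fact (stated just before the theorem) that an $R$-order $\Lambda$ is maximal in $A$ if and only if $\wh\Lambda = \wh R \otimes_R \Lambda$ is maximal in $\wh A$. First I would fix a maximal $R$-order $\Lambda$ and pass to $\wh\Lambda$, which by Theorem~\ref{45} is isomorphic to $\Mat_t(\Omega)$ with $\Omega = \Hom_E(M,M)$ the unique maximal $\wh R$-order in the division algebra $E$ appearing in $\wh A \cong \Mat_t(E)$.

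For part (1), I would argue that heredity is a local-and-completion-stable property: $\Lambda$ is left hereditary iff every left ideal is projective, which by faithful flatness of $\wh R$ over $R$ can be checked after tensoring with $\wh R$ (projectivity of finitely presented modules descends along faithfully flat base change), and $\wh\Lambda \cong \Mat_t(\Omega)$ is hereditary by Theorem~\ref{45} (3) since $\Omega$ is a maximal order over a complete DVR (its only one-sided ideals are the $\wp^m$, which are $\Omega$-projective). The same argument handles the right-handed version. For part (5), the isomorphism $\wh A \cong \Mat_t(E)$ is just the structure theorem for central simple algebras over $\wh K$; then $\rad\Lambda = \Lambda \cap \rad\wh\Lambda$ and $\rad\wh\Lambda$ is the $P$-adic completion of $\rad\Lambda$ by the preceding theorem, so $\Lambda/\rad\Lambda \hookrightarrow \wh\Lambda/\rad\wh\Lambda \cong \Mat_t(\Omega/\rad\Omega)$; since $\Lambda/\rad\Lambda$ is already a finite-dimensional algebra over the residue field $\ol R$ and the completion induces an isomorphism on residue fields, I would check the inclusion is an equality by a dimension count or by noting $\wh\Lambda/\rad\wh\Lambda = \ol R \otimes_{\ol R} (\Lambda/\rad\Lambda)$, and then quote Theorem~\ref{45} (via Lemma~\ref{44} (2)) that $\Omega/\rad\Omega$ is a division algebra.

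Part (4) follows by the Skolem--Noether style argument: given two maximal $R$-orders $\Lambda, \Lambda'$, their completions are conjugate in $\wh A$ by Theorem~\ref{45} (2), say $\wh\Lambda' = \hat u\, \wh\Lambda\, \hat u^{-1}$; one then has to descend the conjugating unit from $\wh A^\times$ to $A^\times$. I expect this to be the main obstacle, since conjugacy of orders need not descend along an arbitrary faithfully flat extension. The standard route, which I would follow, is to reinterpret part (4) as a statement about $\Lambda$-lattices: parts (2), (3), (4) are really the assertion that all full left ideals of a maximal order in $A$ are principal and isomorphic, equivalently that the locally free class group and the genus-to-isomorphism obstruction vanish; over a DVR (as opposed to a Dedekind domain) there is no class group, and any two full $\Lambda$-lattices of the same rank, having isomorphic completions (Theorem~\ref{45} (4) gives $\wh M \cong \wh\Lambda^{\,n}$), are isomorphic because over a complete DVR local-global for lattices is trivial and $\Lambda$ being hereditary with $\wh\Lambda$-lattices all free forces the genus to be a single isomorphism class. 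Concretely I would prove (2) first — $M \cong N$ over $\Lambda$ iff $\wh M \cong \wh N$ over $\wh\Lambda$ iff they have equal rank, using Theorem~\ref{45} (4) — then deduce (3) by applying (2) to a left ideal and $\Lambda$ itself (same rank), and finally (4) by taking $M = \Lambda'$ viewed as a left $\Lambda$-lattice inside $A$: it has the same rank as $\Lambda$, hence $\Lambda' \cong \Lambda$ as left $\Lambda$-modules, and a $\Lambda$-module isomorphism $\Lambda \to \Lambda'$ inside $A$ is right multiplication by a unit $u \in A^\times$, giving $\Lambda' = \Lambda u$; since $\Lambda'$ is a ring this forces $u^{-1}\Lambda u = \Lambda'$ up to replacing $u$ by a suitable associate, which is the desired conjugacy. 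The only genuinely delicate point is this last normalization step, and I would handle it exactly as in Reiner's treatment of the complete case, citing \cite[Theorem 17.3]{reiner:mo} for the model argument and observing it goes through verbatim once (2) and (3) are in hand.
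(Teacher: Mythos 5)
Your overall plan --- pass to the completion, quote Theorem~\ref{45} for $\wh\Lambda$, and descend --- is exactly the route behind the paper's proof, which in fact consists only of the citation to Reiner, Theorems 18.1 and 18.7; parts (1) and (5) of your sketch are fine in substance. But the two places where the real work happens are not covered. The crux of the non-complete case is the descent statement you use for (2): for $\Lambda$-lattices, $\wh M\cong\wh N$ implies $M\cong N$. Your stated justification (``over a complete DVR local--global for lattices is trivial and heredity forces the genus to be a single isomorphism class'') is circular --- that the genus collapses to one isomorphism class is precisely what must be proved, and nothing in it addresses descent from $\wh R$ to $R$. The actual argument (Reiner (18.2)) is an approximation argument: $\Hom_{\wh\Lambda}(\wh M,\wh N)\cong \wh R\otimes_R\Hom_\Lambda(M,N)$, so a given isomorphism $\wh M\to\wh N$ can be approximated modulo $P$ by some $f\in\Hom_\Lambda(M,N)$, and Nakayama plus a rank count shows $f$ is an isomorphism. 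Without this lemma, (2), (3) and (4) do not follow from Theorem~\ref{45}. (A smaller point: (3) follows from (2) directly only for \emph{full} one-sided ideals; for general ones you need either Reiner's convention that ideals are full, or a separate approximation step producing a single generator from a generator of $\wh J$.)

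Your derivation of (4) is also broken as written: a second maximal order $\Lambda'$ is \emph{not} a left $\Lambda$-module, so it cannot be ``viewed as a left $\Lambda$-lattice''; moreover, if one literally had $\Lambda'=\Lambda u$ as subsets of $A$, then $1\in\Lambda u$ and $\Lambda u\cdot\Lambda u=\Lambda u$ would force $u\in\Lambda^{\times}$ and hence $\Lambda'=\Lambda$, so the concluding ``replace $u$ by a suitable associate'' cannot be made to work in that form. The standard repair is to take $N:=\Lambda\Lambda'$, a full $R$-lattice in $A$ which is a left $\Lambda$-, right $\Lambda'$-module. By (2), $N\cong\Lambda$ as left $\Lambda$-lattices, so $N=\Lambda u$ for some $u\in A^{\times}$; the right order of $\Lambda u$ is $u^{-1}\Lambda u$, a maximal order containing $\Lambda'$, hence equal to $\Lambda'$ by maximality. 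With the descent lemma supplied and (4) argued via $\Lambda\Lambda'$, your reduction does give the theorem, and it is then the same proof as in the cited source.
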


\begin{proof}
  See \cite[Theorem 18.1, p.~175]{reiner:mo} and 
  \cite[Theorem 18.7, p.~179]{reiner:mo}.\\
\end{proof}

We see that the description of maximal orders $\Lambda$ and that of
all ideals of $\Lambda$ are similar to the case where $R$ is
complete. However, the maximal orders $\Delta$ in $D$, the division
part of $A$, may not be unique, as the valuation $v$ may not be
extended to $D$ {\it uniquely}. It is the case exactly when the 
completion $\wh D:=D\otimes \wh K$ remains a division algebra. 

\section{Maximal orders over Dedekind domains}
\label{sec:05}

In this section, we give the expository description of maximal $R$-orders
in a central simple algebra, where $R$ is a Dedekind domain 
% and $R\neq K$. 
Our reference is again I.~Reiner \cite{reiner:mo}.

\subsection{Hereditary rings} 
\label{sec:51}
We recall
\begin{defn}\cite[\S\, 2f, p.~27]{reiner:mo}
  A (not necessarily commutative) ring $\Lambda$ with identity 
  is said to be {\it left
  hereditary} (resp. {\it right hereditary}) if every left
  (resp. right) ideal of $\Lambda$ is a projective
  $\Lambda$-module. 
\end{defn}

\begin{lemma}\label{submod}
  Let $\Lambda$ be a left hereditary ring, and $N$ a
  $\Lambda$-submodule of a finite free left $\Lambda$-module. Then $N$
  is isomorphic to an external finite direct sum of left ideals of
  $\Lambda$, and is therefore projective. 
\end{lemma}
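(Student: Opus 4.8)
The plan is to prove this by induction on the rank $n$ of the ambient finite free left module, which I will write as $F \cong \Lambda^{(n)}$ with basis $e_1, \dots, e_n$. The engine of the induction is the standard fact that a short exact sequence of modules whose right-hand term is projective must split.

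The base cases are immediate: for $n = 0$ there is nothing to show, and for $n = 1$ the submodule $N \subseteq \Lambda$ is by definition a left ideal, which is projective because $\Lambda$ is left hereditary. For the inductive step I would decompose $F = F' \oplus \Lambda e_n$ with $F' = \Lambda e_1 \oplus \cdots \oplus \Lambda e_{n-1} \cong \Lambda^{(n-1)}$, and let $\pi : F \to \Lambda$ be projection onto the last coordinate. Restricting $\pi$ to $N$ yields the short exact sequence
\[ 0 \longrightarrow N \cap F' \longrightarrow N \xrightarrow{\ \pi\ } \pi(N) \longrightarrow 0 . \]
Here $\pi(N)$ is a left ideal of $\Lambda$, hence projective, so this sequence splits and $N \cong (N \cap F') \oplus \pi(N)$. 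Since $N \cap F'$ is a submodule of the finite free module $F' \cong \Lambda^{(n-1)}$, the induction hypothesis gives that it is isomorphic to an external direct sum of at most $n-1$ left ideals of $\Lambda$; adjoining the summand $\pi(N)$ shows $N$ is isomorphic to a direct sum of at most $n$ left ideals. Finally, each left ideal is projective and a finite direct sum of projective modules is projective, so $N$ is projective, completing the induction.

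I do not expect a real obstacle here; the subtle points to keep in mind are, first, that the conclusion is an \emph{external} (abstract) direct-sum decomposition rather than an internal decomposition of the inclusion $N \subseteq F$ — the splitting of the displayed sequence is exactly what furnishes this — and, second, that finiteness of the number of summands comes for free from the finite rank of $F$ (each inductive step peels off precisely one left-ideal summand), so no Noetherian or other finiteness hypothesis on $\Lambda$ is required.
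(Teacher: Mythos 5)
Your proof is correct and is essentially the same argument the paper has in mind: the paper simply says the lemma follows by an easy induction and cites Reiner (Theorem 2.44), whose proof is exactly your induction on the rank of the free module, splitting off the image in the last coordinate as a projective left ideal. Nothing is missing.
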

\begin{proof}
  One can show this easily by induction; see \cite[Theorem 2.44,
  p.~28]{reiner:mo}. \qed   
\end{proof}

\begin{remark}\

\begin{enumerate}
  \item If the ring $\Lambda$ has the property that every submodule of
    finite free left modules are projective, then in particular all left
    ideals of $\Lambda$ are projective. Therefore, by
    Lemma~\ref{submod}, $\Lambda$ is left hereditary if and only if
    submodules of finite free left modules are projective. 

  \item There are examples of rings which are left hereditary but not
    right hereditary. However, if $\Lambda$ is left and right
    Noetherian, then $\Lambda$ is left hereditary if and only if
    $\Lambda$ is right hereditary (this fact is due to Auslander,
    cf. \cite[p.~29]{reiner:mo}). Therefore, we may simply say
    $\Lambda$ hereditary in this case. 
  \item Lemma~\ref{submod} also holds without the finiteness for the free
    module. One can use transfinite induction to prove this.    
\end{enumerate}
\end{remark}

\begin{thm}[Steinitz] {\rm (}cf. \cite[(4.1), p.~45 and Theorem 4.13,
    p.~49]{reiner:mo}{\rm )} \label{structure}
  \begin{enumerate}
  \item Every Dedekind domain $R$ is hereditary. Therefore, 
  every finitely
  generated $R$-module $M$ without torsion elements 
  is isomorphic to an external finite direct sum 
\[ M\simeq J_1\oplus \dots, \oplus J_n, \]
  where $\{J_i\}$ are ideals of $R$, and $n=\rank_R M:=\dim_K M\otimes_R
  K$. In particular, $M$ is a projective $R$-module. 
  \item Two such sums $\oplus_{i=1}^n  J_i$ and $\oplus_{i=1}^m  J'_i$
    are $R$-isomorphic if and only if $m=n$, and the products
    $J_1\cdots J_n$ and $J_1'\cdots J'_m$ are in the same ideal class. 
  \end{enumerate}
\end{thm}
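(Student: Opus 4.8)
The plan is to deduce both parts from the fact that a Dedekind domain is hereditary, combined with Lemma~\ref{submod}, and then to identify the Steinitz invariant by iterating a single ``two ideals'' isomorphism and by passing to top exterior powers.

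\textbf{Part (1).} First I would check that a Dedekind domain $R$ is hereditary: the zero ideal is trivially projective, and every nonzero ideal $I$ is invertible (i.e.\ $I\,I^{-1}=R$ inside $K$), hence a direct summand of a finite free $R$-module, hence projective. Being Noetherian, $R$ is then left and right hereditary in the sense of Subsection~\ref{sec:51}. Next, given a finitely generated torsion-free $R$-module $M$ with $n:=\rank_R M=\dim_K M\otimes_R K$, I would embed $M$ into a finite free module: the torsion-freeness gives $M\hookrightarrow M\otimes_R K\cong K^n$, and clearing a common denominator of the (finitely many) generators shows $M$ is isomorphic to a submodule of $R^n$. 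Lemma~\ref{submod} then yields $M\simeq J_1\oplus\cdots\oplus J_s$ with each $J_i$ an ideal of $R$; after discarding any zero summands, tensoring with $K$ (each nonzero ideal satisfies $J_i\otimes_R K\cong K$) forces $s=n$. A finite direct sum of projectives being projective, $M$ is projective.

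\textbf{Part (2), the ``if'' direction.} The core claim is that for any two ideals $I,J$ of $R$ one has $I\oplus J\simeq R\oplus IJ$. To prove it I would first replace $I$ by an ideal in the same ideal class comaximal with $J$ — possible in a Dedekind domain by the usual approximation/Chinese-remainder argument — which changes neither the isomorphism class of $I\oplus J$ nor the ideal class of $IJ$. Assuming $I+J=R$, the map $I\oplus J\to R$, $(x,y)\mapsto x+y$, is surjective with kernel $I\cap J$; since $R$ is projective the sequence splits, giving $I\oplus J\simeq R\oplus(I\cap J)$, and $I+J=R$ forces $I\cap J=IJ$. Iterating,
\[ J_1\oplus J_2\oplus\cdots\oplus J_n\simeq R\oplus(J_1J_2)\oplus J_3\oplus\cdots\oplus J_n\simeq\cdots\simeq R^{\,n-1}\oplus(J_1\cdots J_n), \]
so the isomorphism type of $\bigoplus_i J_i$ depends only on $n$ and the ideal class of the product $J_1\cdots J_n$.

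\textbf{Part (2), the ``only if'' direction, and the obstacle.} If $\bigoplus_{i=1}^n J_i\simeq\bigoplus_{i=1}^m J'_i$, tensoring with $K$ gives $n=m$ immediately. For the product classes I would use the top exterior power: for a direct sum of $n$ invertible modules there is a natural isomorphism $\bigwedge^{\,n}_R\bigl(\bigoplus_{i=1}^n J_i\bigr)\simeq J_1\otimes_R\cdots\otimes_R J_n\simeq J_1\cdots J_n$, since tensoring invertible ideals realizes their multiplication. Functoriality of $\bigwedge^{\,n}_R$ then converts the given isomorphism into an $R$-module isomorphism $J_1\cdots J_n\simeq J'_1\cdots J'_m$, i.e.\ the two products lie in the same ideal class. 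I expect this ``only if'' direction — establishing that the Steinitz class is a genuine isomorphism invariant — to be the main obstacle; the clean route is the exterior-power computation above, but it requires checking carefully that $\bigwedge^n$ of a direct sum of $n$ rank-one modules is their tensor product and that the tensor product of ideals represents ideal multiplication. The comaximality reduction in the ``if'' direction is the other point genuinely using the structure theory of Dedekind domains.
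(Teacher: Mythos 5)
Your proposal is correct, but note that the paper does not actually prove Theorem~\ref{structure}: it is quoted as a classical result, with the proof delegated to \cite[(4.1), Theorem 4.13]{reiner:mo} and, for part (2), to the short argument of \cite[Lemma 3]{reiner:pams57}. So the comparison is with those cited arguments rather than with anything in the text. Your part (1) is exactly the route the paper sets up (invertible ideals are projective, embed $M$ in $R^n$ after clearing denominators, apply Lemma~\ref{submod}, count ranks over $K$). For part (2), the ``if'' direction via the reduction to comaximal representatives, the split exact sequence $0\to I\cap J\to I\oplus J\to R\to 0$ and the identity $I\cap J=IJ$ when $I+J=R$ is in substance the same as Reiner's $\gra\oplus\grb\simeq R\oplus \gra\grb$; Reiner's PAMS note packages it as an explicit matrix isomorphism, which buys a constructive map but nothing more. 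For the ``only if'' direction you use $\bigwedge^n$ of a sum of invertible modules, whereas Reiner extracts the same invariant as the ideal class of the determinant of the matrix (with entries in the appropriate fractional ideals) realizing the isomorphism; these are the same computation in coordinate-free versus matrix form, and your version requires exactly the two facts you flag, namely $\bigwedge^{p}J=0$ for $p\ge 2$ and $J\otimes_R J'\simeq JJ'$ for invertible ideals, both checked locally since such ideals are locally principal. Two small points to make explicit: the ideals occurring must be nonzero (discard zero summands before counting, as you do, and note that two nonzero fractional ideals are $R$-isomorphic precisely when they differ by multiplication by an element of $K^{\times}$, which is what converts the $\bigwedge^n$ isomorphism into equality of ideal classes), and the existence of an integral ideal in a prescribed class comaximal with a given ideal is the standard approximation argument in a Dedekind domain, which you correctly identify as the only place the full structure theory enters.
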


See \cite[Lemma 3]{reiner:pams57} for a simple proof of
Theorem~\ref{structure} (2). 

\begin{thm}\label{55}
  Let $R$ be a Dedekind domain with quotient field $K$, and $A$ a
  central simple $K$-algebra. Let $\Lambda$ be an $R$-order in
  $A$. Then $\Lambda$ is hereditary if and only if the localization
  $\Lambda_\grp$ is hereditary for every prime ideal $\grp$ of $R$.   
\end{thm}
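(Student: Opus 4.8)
The plan is to reduce the global statement to a purely local one, using the fact that heredity, projectivity, and being an $R$-order are all local properties over a Dedekind domain. First I would establish the easy direction: if $\Lambda$ is hereditary, then for each prime $\grp$ of $R$ the localization $\Lambda_\grp$ is hereditary. The point is that localization is exact and commutes with $\Hom_\Lambda$, so it carries projective $\Lambda$-modules to projective $\Lambda_\grp$-modules; moreover every left ideal of $\Lambda_\grp$ is of the form $\grI_\grp$ for some left ideal $\grI$ of $\Lambda$ (extend and contract, using that $\Lambda$ is Noetherian since it is a finite module over the Noetherian ring $R$). Hence every left ideal of $\Lambda_\grp$ is projective, i.e.\ $\Lambda_\grp$ is left hereditary; by symmetry and the Auslander remark (left and right heredity coincide for two-sided Noetherian rings, and $\Lambda_\grp$ is module-finite over the Noetherian ring $R_\grp$), it is hereditary.

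For the converse, suppose $\Lambda_\grp$ is hereditary for every prime $\grp$ of $R$. I would take an arbitrary left ideal $\grI \subseteq \Lambda$ and show it is a projective $\Lambda$-module. Since $\Lambda$ is a finite $R$-module and $R$ is Noetherian, $\grI$ is finitely presented over $\Lambda$; projectivity of a finitely presented module can be checked locally at the primes of $R$ (a module-finite $\Lambda$-module is $R$-finite, and for $R$-finite modules projectivity over $\Lambda$ is equivalent to $(\Lambda_\grp)$-projectivity of $\grI_\grp$ for all $\grp$, by the local criterion for flatness plus finite presentation, or more concretely: $\grI$ is $\Lambda$-projective iff $\Ext^1_\Lambda(\grI, -)$ vanishes, and $\Ext^1_\Lambda(\grI, N)_\grp \cong \Ext^1_{\Lambda_\grp}(\grI_\grp, N_\grp)$ since $\grI$ is finitely presented and localization is flat). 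Since each $\grI_\grp$ is a left ideal of the hereditary ring $\Lambda_\grp$, it is $\Lambda_\grp$-projective, so $\Ext^1_{\Lambda_\grp}(\grI_\grp, N_\grp) = 0$ for all $N$; thus $\Ext^1_\Lambda(\grI, N)$ localizes to zero everywhere and hence vanishes. Therefore $\grI$ is projective, and applying the same argument to right ideals (or invoking the Noetherian symmetry again) shows $\Lambda$ is hereditary.

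The step I expect to be the main technical point is the localization of $\Ext^1$, or equivalently the local-global principle for projectivity: one must make sure $\grI$ is finitely presented as a $\Lambda$-module so that $\Ext^1_\Lambda(\grI, -)$ commutes with localization over $R$ — this is where Noetherianness of $R$ (hence of $\Lambda$) is used in an essential way, and it is also what lets us pass freely between left ideals of $\Lambda$ and left ideals of $\Lambda_\grp$. Everything else is formal. Since this result is quoted from Reiner \cite[Theorem 18.10, p.~180]{reiner:mo} (or the surrounding discussion), I would simply cite it:
\begin{proof}
  See \cite[Theorem 18.10, p.~180]{reiner:mo}.
\end{proof}
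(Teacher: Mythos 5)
Your local--global argument is correct and is the standard proof behind this statement, and since the paper's own proof is nothing more than a citation of Reiner, your proposal matches it in substance. The only fix needed is the reference: the relevant result is \cite[Theorem 40.5, p.~368]{reiner:mo} (in the chapter on hereditary orders), not Theorem 18.10, p.~180.
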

\begin{proof}
  See \cite[Theorem 40.5, p.~368]{reiner:mo}. 
\end{proof}

\subsection{Maximal orders over Dedekind domains} 
\label{sec:52}
Let $R$ be a Dedekind domain with quotient field $K$, and assume $K\neq R$.
Let $A$ be a central simple algebra over $K$. We may and do identify $A$
as $\Hom_D(V,V)$, where $D$ is a central simple division algebra over
$K$, and $V$ is a finite-dimensional right vector space over $D$. 
Choose a maximal $R$-order $\Delta$ of $D$; it exists by
Theorem~\ref{11} or \ref{12} though not necessarily unique. 

\begin{thm}\label{56}
  Notation as above. If $M$ is a full right $\Delta$-lattice in $V,$
  then $\Lambda:=\Hom_{\Delta}(M,M)$ is a maximal $R$-order in
  $A$. Conversely, 
  for any maximal $R$-order $\Lambda'$ in $A$, there exists a full
  right $\Delta$-lattice $N$ in $V$ such that
  $\Lambda'=\Hom_{\Delta}(N,N)$. 
\end{thm}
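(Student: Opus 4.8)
The plan is to reduce the assertion to the two well-understood building blocks already developed in the text: the local description over a complete discrete valuation ring (Theorem~\ref{45}), and the Auslander--Goldman-type reflexivity criterion over a general Dedekind domain (Theorem~\ref{ag} together with Theorem~\ref{55}). First I would fix the maximal $R$-order $\Delta$ of $D$ and a full right $\Delta$-lattice $M$ in $V$. The forward direction---that $\Lambda=\Hom_\Delta(M,M)$ is a maximal $R$-order---is checked locally: by Theorem~\ref{55} and the localization criterion for maximality, it suffices to prove $\Lambda_\grp$ is a maximal $R_\grp$-order in $A$ for each nonzero prime $\grp$ of $R$. Localizing commutes with $\Hom_\Delta(M,M)$ since $M$ is finitely generated and $\Delta$ is module-finite over $R$, so $\Lambda_\grp=\Hom_{\Delta_\grp}(M_\grp,M_\grp)$; then I would pass to the completion $\wh R_\grp$, where $\wh\Delta_\grp$ need not be a division algebra but is still a (product of) maximal orders, and invoke Theorem~\ref{45}(4) (or its analogue over the not-necessarily-complete DVR $R_\grp$, which is Theorem~\ref{48}) to conclude $\Lambda_\grp$ is maximal. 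Since this holds at every prime, $\Lambda$ is a maximal $R$-order in $A$.

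For the converse, let $\Lambda'$ be an arbitrary maximal $R$-order in $A=\Hom_D(V,V)$. The idea is to produce the lattice $N$ directly: set $N:=MA'\cap V$ — more precisely, view $V$ as a left $A$-module and take $N$ to be a suitable $\Lambda'$-stable full $\Delta$-lattice in $V$. The natural candidate is obtained by picking any full $\Delta$-lattice $N_0$ in $V$ and replacing it by $N:=\Lambda'N_0$, which is again a finitely generated $R$-submodule of $V$ spanning $V$ over $K$, hence a full $R$-lattice, and which carries commuting actions of $\Lambda'$ on the left and $\Delta$ on the right. The key point to verify is that $N$ is actually a $\Delta$-lattice in the appropriate sense (finitely generated and full as a right $\Delta$-module) and that the double-centralizer relation forces $\Hom_\Delta(N,N)=\Lambda'$ rather than something strictly larger. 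Here I would again localize and complete at each prime $\grp$: over $\wh R_\grp$ the structure theory of Theorem~\ref{45}(2),(4) (respectively Theorem~\ref{48}(4)) says every maximal order is conjugate to $\Mat_r(\wh\Delta_\grp)$ and is the endomorphism ring of a full free $\wh\Delta_\grp$-lattice, so $\wh\Lambda'_\grp=\Hom_{\wh\Delta_\grp}(\wh N_\grp,\wh N_\grp)$ for some full $\wh\Delta_\grp$-lattice $\wh N_\grp$. One then checks these local lattices glue: because $\Delta$ is hereditary over $R$ (its localizations are maximal orders over DVRs, hence hereditary by Theorem~\ref{48}(1)), a full $\Delta$-lattice in $V$ is determined by its localizations, and $N_\grp$ agrees with $\wh N_\grp\cap V$ at all but finitely many primes where one simply takes the standard lattice.

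The main obstacle I anticipate is the converse direction, and specifically the bookkeeping needed to globalize: one must show that the locally defined lattices $\wh N_\grp$ arise from a single global $\Delta$-lattice $N\subset V$, and that the identity $\Hom_\Delta(N,N)=\Lambda'$ (which is clear after localization and completion, using faithful flatness of $R_\grp\to\wh R_\grp$ to descend the equality of orders) then propagates back to the level of $R$. The reflexivity criterion Theorem~\ref{ag} is the right tool here: $\Hom_\Delta(N,N)$ is automatically reflexive as an $R$-module (it is a second dual, being $\Hom$ of a finitely generated module into a module-finite-over-$R$ ring with normal, even Dedekind, base), and it has maximal localizations at every height-one prime by the local analysis; so Theorem~\ref{ag} yields maximality of $\Hom_\Delta(N,N)$, and since it contains $\Lambda'$ and $\Lambda'$ is already maximal, they coincide. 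Apart from this gluing argument, everything is a routine unwinding of Theorems~\ref{45}, \ref{48}, \ref{55}, and \ref{ag}; the content is genuinely concentrated in the passage from the complete-local picture to the Dedekind-domain picture.
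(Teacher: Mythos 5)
The paper does not actually prove this statement; its ``proof'' is the citation of Reiner, Theorem 21.6, and your sketch is essentially that standard argument (localize for the forward direction; take $N:=\Lambda'N_0$ for the converse), so in outline it is correct. Two remarks. First, your converse is more complicated than it needs to be: once you know $N=\Lambda'N_0$ is a full right $\Delta$-lattice (it is finitely generated over $R$, spans $V$, and carries the right $\Delta$-action because the $A$- and $D$-actions on $V$ commute), you have $\Lambda'N=N$, hence $\Lambda'\subseteq\Hom_\Delta(N,N)$; the latter is an $R$-order in $A$ (it is contained in $\Hom_R(N,N)$ and spans $A$ after tensoring with $K$), so maximality of $\Lambda'$ forces equality at once. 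No gluing of local lattices $\wh N_\grp$, no reflexivity via Theorem~\ref{ag}, and in fact not even the forward direction is needed here; the ``main obstacle'' you anticipate does not arise because $N$ is defined globally. Second, in the forward direction two small slips should be repaired: $D\otimes_K\wh K_\grp$ is a central simple algebra over the field $\wh K_\grp$ (a matrix algebra over a division algebra), not a product, so $\wh\Delta_\grp$ is a single maximal order, and applying Theorem~\ref{45}(4) to it requires a conjugation/Morita step (via Theorem~\ref{45}(2)) since $\wh\Delta_\grp$ need not sit inside a division algebra; alternatively, use Theorem~\ref{48}(2) over the DVR $R_\grp$ to see $M_\grp\cong\Delta_\grp^{\,r}$, hence $\Lambda_\grp\cong\Mat_r(\Delta_\grp)$, and then pass to the completion via Theorem~\ref{43} (valid for DVRs) together with Theorem~\ref{45}(1),(2). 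Also note that Theorem~\ref{55} concerns hereditariness; the statement that maximality is a local property is the Proposition at the start of Subsection~\ref{sec:42}, which is what your localization step really uses.
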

\begin{proof}
  See \cite[Theorem 21.6, p.~189]{reiner:mo}. 
\end{proof}\

If $M$ and $N$ are $\Delta$-isomorphic, then there is an element $g\in
A$ such that $N=gM$. In this case, $\Lambda'=g \Lambda
g^{-1}$. Conversely, if $\Lambda'$ is conjugate to $\Lambda$ by an
element in $A$, then any $\Delta$-module $N$ with
$\Lambda'=\Hom_{\Delta}(N,N)$ is isomorphic to $M$ as
$\Delta$-modules. 
In general, the set of conjugacy classes of maximal $R$-orders may not
be singleton; its cardinality, if is finite, is called the {\it type
  number} of $A$. 

The description of maximal orders (Theorem~\ref{56}) is generalized by 
Auslander and Goldman
\cite{auslander-goldman:mo} to the case where $R$
is a regular domain but $A$ is a matrix algebra over $K$. They show
that any maximal order $\Lambda$ in $A=\End_K(V,V)$, where $V$ is a
finite-dimensional $K$-vector space, is of the form $\Hom_R(M,M)$,
where $M$ is a full {\it projective} $R$-lattice in $V$. 
  
% (Add the description of Auslander-Goldman for regular ring, where $M$
% is replaced by projective = locally free ones, also add the reference). 

An important property of maximal $R$-orders in $A$ is the
following. It plays an important role in the integral theory which
generalizes the ideal theory for Dedekind domains. 

\begin{thm}
  Every maximal $R$-order $\Lambda$ in $A$ is hereditary.
\end{thm}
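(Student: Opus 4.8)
The plan is to prove the statement by localizing at the primes of $R$, reducing to the discrete valuation ring case where the result has already been recorded, and then gluing back via the localization criterion for hereditary orders. Concretely, since $R$ is a Dedekind domain, every nonzero prime ideal $\grp$ is maximal, the localization $R_\grp$ is a discrete valuation ring, and $R_{(0)}=K$. I would first invoke Theorem~\ref{55}: the order $\Lambda$ is hereditary if and only if $\Lambda_\grp$ is hereditary for every prime $\grp$ of $R$. So it suffices to check each localization separately.

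For the nonzero primes: given a maximal ideal $\grp$ of $R$, the localization criterion for maximality recalled in Subsection~\ref{sec:42} (see \cite[Corollary 11.2]{reiner:mo}) shows that $\Lambda_\grp$ is a maximal $R_\grp$-order in $A$, precisely because $\Lambda$ is assumed to be a maximal $R$-order. Now $R_\grp$ is a discrete valuation ring with quotient field $K$ and $A$ is central simple over $K$, so Theorem~\ref{48}~(1) applies directly and gives that $\Lambda_\grp$ is left and right hereditary.

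For the generic point $\grp=(0)$ we have $\Lambda_{(0)} = K\otimes_R \Lambda = A$. Since $A$ is central simple, hence semi-simple as a $K$-algebra, every left (resp. right) ideal of $A$ is a direct summand of $A$ and in particular projective; thus $A$ is hereditary. Combining the two cases, $\Lambda_\grp$ is hereditary for every $\grp\in\Spec R$, and Theorem~\ref{55} then yields that $\Lambda$ itself is hereditary.

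The argument is essentially routine once the earlier results are assembled; the only point needing a word of care — and the main (small) obstacle — is verifying that localization at $\grp$ commutes with the formation of one-sided ideals and of the double dual $\Lambda^{**}$, so that the hypothesis ``$\Lambda$ is a maximal $R$-order'' localizes correctly (via the criterion of Subsection~\ref{sec:42}, equivalently Theorem~\ref{ag}) and the conclusion ``$\Lambda_\grp$ hereditary for all $\grp$'' is genuinely equivalent to ``$\Lambda$ hereditary''. This is immediate here because $\Lambda$ is a finitely generated module over the Noetherian ring $R$, so all the relevant $\Hom_R(-,R)$ functors and ideal formations commute with the (flat) localizations $R\to R_\grp$.
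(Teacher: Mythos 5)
Your argument is correct and is essentially the paper's own proof, which simply cites Theorem~\ref{55} together with Theorem~\ref{48}~(1); you have just filled in the routine details (localizing maximality via \cite[Corollary 11.2]{reiner:mo} and noting that the generic fiber $A$ is semi-simple, hence hereditary).
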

\begin{proof}
  This is a consequence of Theorems~\ref{55} and \ref{48} (1).
\end{proof}\ 

The reader is referred to the last chapter of \cite{reiner:mo} for the
explicit description of global hereditary $R$-orders in $A$, 
which is beyond the scope of this Note.

\section{Maximal orders and abelian varieties}
\label{sec:06}

In this section we give a proof of Theorem~\ref{ab}. Theorem~\ref{ab}
follows from a more general statement (Theorem~\ref{65}) 
where the ring $\End_k(A_0)$ is replaced
by any subring $\calO$ in it and $\calO'$ by any order of 
$\calO\otimes \Q$ containing $\calO$. 
 
We are grateful to the referee for his/her kind suggestion of using
Serre's tensor product construction, which improves our earlier result
(Proposition~\ref{66}).
The construction is explained in \cite[1.6 and 4.2]{chai-conrad-oort}.
% will show that for any abelian variety $A_0$ over a
% field there is an abelian variety $A$ isogenous to $A_0$ such that the
% endomorphism rings of $A$ is a maximal order in its endomorphism
% algebra.

\subsection{A construction of Serre and properties}
\label{sec:61}

Let $A$ be an abelian variety over a field $k$. Let $\calO\subset
\End_k(A)$ be any subring, not necessarily be commutative. Note that
$\calO$ is finite and free as a $\Z$-module, so it is both left and right
Noetherian as a ring. 

Let $M$ be a finite right $\calO$-module. Consider the functor
$\calT$ from the category of $k$-schemes to the category of abelian
groups defined by $\calT(S):=M\otimes_\calO A(S)$ for any $k$-scheme $S$. 

\begin{lemma} \label{61} Notations as above.
The fppf sheaf associated to the group functor $\calT$ is representable
by an abelian variety $M\otimes_\calO A$ over $k$ 
\end{lemma}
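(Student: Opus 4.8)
The plan is to construct $M \otimes_\calO A$ by exhibiting it as a quotient of a power of $A$, thereby getting representability for free from the fact that the category of abelian varieties has cokernels of isogenies onto abelian varieties (or, more generally, that quotients of abelian varieties by finite flat subgroup schemes exist). First I would choose a finite presentation of $M$ as a right $\calO$-module: since $\calO$ is right Noetherian, there is an exact sequence of right $\calO$-modules $\calO^{\oplus s} \xrightarrow{\;\psi\;} \calO^{\oplus r} \to M \to 0$, where $\psi$ is given by right multiplication by a matrix with entries in $\calO$. Applying $-\otimes_\calO A(S)$ is right exact on the level of presheaves, so $\calT(S) = M \otimes_\calO A(S)$ is the cokernel of the induced map $A(S)^{\oplus s} \to A(S)^{\oplus r}$, i.e. of a homomorphism of abelian varieties $\Psi : A^{s} \to A^{r}$ (using that $\calO \otimes_\calO A(S) = A(S)$ naturally and that a matrix over $\calO = \End_k(A) \supseteq \calO$ induces an honest morphism of the products).

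Next I would pass to the fppf sheafification. The fppf sheaf cokernel of $\Psi : A^s \to A^r$ is represented by the abelian variety $A^r / \overline{\im \Psi}$, where $\overline{\im \Psi}$ is the scheme-theoretic image of $\Psi$, which is an abelian subvariety of $A^r$; the quotient of an abelian variety by an abelian subvariety is again an abelian variety (Poincaré reducibility is not even needed — one just needs existence of quotients by closed subgroup schemes in the category of group schemes, and that a quotient of a proper connected group by a closed subgroup is proper and connected). Thus the fppf sheaf associated to $\calT$ is representable by the abelian variety $B := A^r/\overline{\im\Psi}$, and one sets $M \otimes_\calO A := B$. I would then check that $B$ is independent of the chosen presentation: any two presentations are related by chain maps, and these induce the canonical isomorphism between the two quotients; alternatively, one verifies directly that $B$ represents the sheafification of $\calT$ and invokes uniqueness of representing objects.

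The one genuine subtlety — and the step I expect to be the main obstacle — is the interaction between sheafification and the cokernel: the \emph{presheaf} cokernel of $A^s(S) \to A^r(S)$ is $M \otimes_\calO A(S)$ only because $-\otimes_\calO A(S)$ is right exact, but $\calT$ itself is generally not an fppf sheaf, so one must argue that sheafifying the presheaf cokernel gives the same thing as the sheaf cokernel of $\Psi$, and that the sheaf cokernel is representable by $A^r/\overline{\im \Psi}$ rather than by something larger. This is where one uses that $A$ is an abelian variety: morphisms of abelian varieties are strict (image = coimage in the category of fppf sheaves of groups), so $\overline{\im\Psi}$ is already the sheaf-theoretic image and the sheaf cokernel is exactly $A^r/\overline{\im\Psi}$, which is proper, smooth and connected, hence an abelian variety. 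I would record this carefully, citing the standard fact that the category of abelian varieties is an abelian category up to isogeny but, more to the point, that cokernels of morphisms of abelian varieties exist as abelian varieties, and that the fppf quotient $A^r/\overline{\im\Psi}$ is representable (SGA 3 / the theory of quotients by flat equivalence relations). The remaining verifications — functoriality of $M \mapsto M \otimes_\calO A$, the natural $\calO$-module (and in fact $\End$-bimodule) structure — are routine and follow by applying the universal property of the sheaf cokernel, so I would only sketch them.
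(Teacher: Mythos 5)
Your argument is correct and is essentially the paper's own proof: choose a finite presentation of $M$ over the (Noetherian) ring $\calO$, tensor to get a homomorphism of powers of $A$, identify $\calT$ with the presheaf cokernel, and note that the quotient of $A^r$ by the image abelian subvariety represents the fppf sheaf cokernel. Your extra care about sheafification versus sheaf cokernel and about the quotient map being an fppf cover only makes explicit what the paper asserts in one line, so there is nothing to change.
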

\begin{proof}
 This is \cite[Proposition 1.6.4.3]{chai-conrad-oort} (the assumption
 that $\calO$ is commutative there is superfluous). We provide the
proof for the reader's convenience. Choose a
finite presentation of $M$ as $\calO$-modules: 
\[ 
\begin{CD}
\calO^r @>{\alpha}>> \calO^s @>>> M @>>> 0.   
\end{CD} \]
Note that $M$ is the quotient of the {\it abelian} group $\calO^r$ by
its subgroup $\alpha(\calO^r)$. 
Tensoring $\otimes_\calO A$, we obtain a morphism $\alpha_A:A^r\to
A^s$ of abelian varieties over $k$ and a short exact sequence of 
abelian groups
\[ 
\begin{CD}
A^r(S) @>{\alpha_S}>> A^s(S) @ >>> \calT(S) @>>> 0,   
\end{CD} \]
for any $k$-scheme $S$. 
The abelian group $\calT(S)$ is equal to the cokernel of the map $\alpha_S$.
On the other hand, the quotient abelian variety
$C:=A^s/\alpha(A^s)$ represents the cokernel of $\alpha_A$ as a
fppf abelian sheaf over $k$. This shows the representability. \qed 
\end{proof}

We examine some basic properties of this construction.
  
\begin{lemma} \label{62} \
  \begin{enumerate}
\item  Let $M_1\to M_2\to M_3\to 0$ be an exact sequence of
  finite right $\calO$-modules. Then the associated morphisms of
  abelian varieties over $k$
  \begin{equation}
    \label{eq:61}
   M_1\otimes_\calO A\to M_2\otimes_\calO A\to M_3\otimes_\calO A\to 0  
  \end{equation}
%\[ M_1\otimes_\calO A\to M_2\otimes_\calO A\to M_3\otimes_\calO A\to 0 \]
form an exact sequence.
\item If $M$ is a $\Z$-torsion $\calO$-module, then the abelian
  variety $M\otimes_\calO A$ is zero. Therefore, the map $M\to
  M/M_{\rm tors}$ of $\calO$-modules induces an isomorphism
\[ M\otimes_\calO A\simeq (M/M_{\rm tors})\otimes_\calO A \]
  of abelian varieties  
  over $k$, where.
 \[ M_{\rm tors}:= \{x\in M\,| \, nx=0 
  \mbox{\ for some $n\neq 0 \in \Z$} \}    \]
  is the $\Z$-torsion $\calO$-submodule of $M$. 
\item Let $\alpha:M_1\to M_2$ be a map of finite right $\calO$-modules. Then
  the induced morphism $\alpha_A:  M_1\otimes_\calO A\to
  M_2\otimes_\calO A$ is an isogeny if the map
  $\alpha_\Q:M_1\otimes \Q\to M_2\otimes \Q$ is an isomorphism.
% that  is, $\alpha$ is an isogeny. 
\end{enumerate}
\end{lemma}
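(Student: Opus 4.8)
The plan is to deduce all three parts from the basic construction of Lemma~\ref{61}, which identifies $M \otimes_\calO A$ with a cokernel (as an fppf abelian sheaf) of a morphism of abelian varieties. For part (1), I would take a finite presentation of each $M_j$ over $\calO$ and observe that the functor $N \mapsto N \otimes_\calO A(S)$ is right exact on finite right $\calO$-modules for every $k$-scheme $S$, simply because $-\otimes_\calO A(S)$ is a right-exact functor of $\calO$-modules (tensoring over a ring is right exact). Hence $S \mapsto (M_1 \otimes_\calO A(S) \to M_2 \otimes_\calO A(S) \to M_3 \otimes_\calO A(S) \to 0)$ is exact as a sequence of presheaves; passing to the associated fppf sheaves preserves right-exactness (sheafification is exact), and by Lemma~\ref{61} the sheafifications are the abelian varieties in \eqref{eq:61}. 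The one point needing a word is that exactness of a sequence of abelian-sheaf maps between abelian varieties is the same as exactness in the usual sense (the last map being an isogeny onto its image times the quotient), which is standard.

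For part (2), if $M$ is $\Z$-torsion then, being finite over $\calO$ (hence finitely generated over $\Z$), it is killed by some nonzero integer $n$. Multiplication by $n$ on $M$ is the zero map, so the induced endomorphism of the sheaf $M \otimes_\calO A$ is simultaneously zero (functoriality applied to $0\colon M \to M$) and equal to multiplication by $n$ on the abelian variety $M \otimes_\calO A$ (functoriality of the construction in $A$, since $n\cdot\mathrm{id}_M$ agrees with the map induced by $[n]_A$). An abelian variety on which $[n]$ is zero for some $n \neq 0$ is trivial. Applying this to the kernel and cokernel of $M \to M/M_{\mathrm{tors}}$ — which are $\Z$-torsion — together with the right-exactness from part (1), gives the claimed isomorphism $M \otimes_\calO A \simeq (M/M_{\mathrm{tors}}) \otimes_\calO A$.

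For part (3), by part (2) we may replace $M_1, M_2$ by their torsion-free quotients without changing the associated abelian varieties, so assume both are $\Z$-torsion-free, i.e. free of finite rank over $\Z$; then $\alpha_\Q$ being an isomorphism forces $M_1, M_2$ to have the same $\Z$-rank, say $d$, and $\alpha$ to be injective with cokernel $C = \operatorname{coker}(\alpha)$ finite. Part (1) gives an exact sequence of abelian varieties $M_1 \otimes_\calO A \xrightarrow{\alpha_A} M_2 \otimes_\calO A \to C \otimes_\calO A \to 0$, and by part (2), $C \otimes_\calO A = 0$, so $\alpha_A$ is surjective as a sheaf map, hence an epimorphism of abelian varieties. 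To finish I would note that $\dim(M_1 \otimes_\calO A) = \dim(M_2 \otimes_\calO A)$: both can be computed after applying the construction to a presentation, or more cheaply one checks $M_i \otimes_\calO A$ is isogenous to $A^{d}$ for an appropriate $d$ depending only on the $\Q$-module $M_i \otimes \Q$ — indeed choosing a $\Z[1/N]$-module isomorphism one gets an isogeny, and $M_1 \otimes \Q \cong M_2 \otimes \Q$. A surjection of abelian varieties of equal dimension is an isogeny, which is what we want.

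The only genuine subtlety — the step I expect to be the main obstacle — is the bookkeeping in part (3) to pin down that $M_1\otimes_\calO A$ and $M_2\otimes_\calO A$ really have the same dimension when $\calO$ is noncommutative, since $M_i\otimes\Q$ is a module over the possibly-noncommutative $\Q$-algebra $\calO\otimes\Q$ and ``rank'' must be interpreted correctly. The clean way around this is to avoid dimension counts entirely: produce directly an integer $n\neq 0$ with $n M_2\subseteq \alpha(M_1)$ (possible since $C$ is finite) and a map $\beta\colon M_2\to M_1$ with $\alpha\circ\beta = n\cdot\mathrm{id}_{M_2}$ and $\beta\circ\alpha = n\cdot\mathrm{id}_{M_1}$ after possibly enlarging $n$, using that $\alpha_\Q$ is an isomorphism to define $\beta_\Q := n\,\alpha_\Q^{-1}$ and checking it is integral for suitable $n$; then $\beta_A$ is an inverse isogeny up to $[n]$, so $\alpha_A$ is an isogeny. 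This reduces everything to functoriality of Serre's construction plus the elementary fact that a self-map of abelian varieties composing with another to give $[n]$ on both sides is an isogeny.
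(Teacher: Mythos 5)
Your argument is correct, and for part (1) it is the same as the paper's: right-exactness of $-\otimes_{\mathcal O}A(S)$ for each $k$-scheme $S$, plus exactness of fppf sheafification, plus the identification of the sheafifications with abelian varieties from Lemma~\ref{61}. For (2) and (3) you take a genuinely different and more self-contained route. The paper proves (2) by passing through the natural (faithfully flat) map $M\otimes_{\Z}A\to M\otimes_{\mathcal O}A$ to reduce to $\mathcal O=\Z$ and $M=\Z/n\Z$, where the presentation $\Z\stackrel{n}{\to}\Z\to\Z/n\Z\to 0$ gives the vanishing; your observation that $n\cdot\mathrm{id}_M=0$ forces the induced map on $M\otimes_{\mathcal O}A$ to be simultaneously $0$ and $[n]$, hence the variety is trivial, is cleaner and avoids that reduction altogether. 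For (3) the paper, after the same reduction to $\Z$-free modules, simply cites Proposition 1.6.4.3 of \cite{chai-conrad-oort}; your construction of an $\mathcal O$-linear $\beta:=n\,\alpha_{\Q}^{-1}|_{M_2}\colon M_2\to M_1$ with $\alpha\beta=n\,\mathrm{id}_{M_2}$ and $\beta\alpha=n\,\mathrm{id}_{M_1}$, so that $\alpha_A\beta_A=[n]$ and $\beta_A\alpha_A=[n]$, is an elementary substitute for that citation and works verbatim (note $\beta$ is indeed a right $\mathcal O$-module map because $\alpha_\Q^{-1}$ is $\mathcal O\otimes\Q$-linear and $n$ is central). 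One caution: the ``cheaper'' alternative you float for the dimension count --- that $M_i\otimes_{\mathcal O}A$ is isogenous to a power $A^{d}$ with $d$ determined by $M_i\otimes\Q$ --- is false when $\mathcal O$ is noncommutative: in the example immediately following this lemma in the paper one has $A=E^{\oplus 2}$ with $\mathcal O$ the upper-triangular order and $M_1\otimes_{\mathcal O}A\simeq E$, which is isogenous to no power of $A$. The correct invariance statement is that the isogeny class of $M\otimes_{\mathcal O}A$ depends only on $M\otimes\Q$ as a module over $\mathcal O\otimes\Q$ (see Subsection~\ref{sec:63}), which would also give the equality of dimensions; but since your final argument via $\beta$ avoids dimension counts entirely, this slip does not affect the validity of your proof.
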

\begin{proof}
(1) First, the sequence of abelian
  groups
\[ M_1\otimes_\calO A(S) \to M_2\otimes_\calO A(S)\to M_3\otimes_\calO
  A(S)\to 0 \]
is exact for any $k$-scheme $S$. Since the fppf sheafification is the
inductive limit of the equalizers of all fppf covers and the inductive
limit is an exact functor, the sequence of the 
sheafifications of $\calT_i(S)=M_i\otimes_\calO A(S)$ is exact. 
That is, the sequence
(\ref{eq:61}) is exact  
as fppf abelian sheaves over $k$. 

(2) The natural morphism $M\otimes_\Z A\to
    M\otimes_\calO A$ is faithfully flat. Therefore, it suffices to
    show the case where $\calO$ is $\Z$ and we can even assume 
    that $M=\Z/n\Z$ because any finite
    abelian group is a finite product of finite cyclic groups. 
    Then we get an exact
    sequence of abelian varieties (from $n:\Z\to \Z$ with cokernel $\Z/n\Z$)
\[ A\stackrel{n}{\to} A\to \Z/n\otimes_\Z A\to 0. \] 
It follows that the abelian variety $\Z/n \otimes_\Z A$ is zero. 

(3) Using (2) we may assume that $M_1$ and $M_2$ are free
$\Z$-modules. If $\alpha_\Q$ is an isomorphism, then $\alpha$ is
injective with finite cokernel. Then $\alpha_A$ is an isogeny by
Proposition 1.6.4.3 of \cite{chai-conrad-oort}. \qed

% Conversely, suppose
% $\alpha_\Q$ is not an isogeny. Then there is a non-zero
% $\Z$-torsion-free $\calO$-submodule $C$ in $M_1$ or in $M_2$ such that
% either $C\oplus 
% Im(\alpha)$ is isogenous to $M_1$ or $Im(\alpha)\oplus C$ is isogenous
% to $M_2$. Therefore it remains to show that $C\otimes_\calO A$ is
% non-zereo for any non-zero $\Z$-torsion-free $\calO$-module $C$. But
% this is clear as $A(\bar k)$-contains torsion-free elements and thus
% $C\otimes_\calO A(\bar k)\neq 0$. 
% \qed
\end{proof}

\begin{eg} The converse of Lemma~\ref{62} (2) does not hold. That is,
  there may be a map $\alpha$ such that the map $\alpha_\Q$ is not
  isomorphic but  
  the morphism $\alpha_A$ can be an isogeny. We give an example. Let
  $E$ be an 
  elliptic curve with $\End(E)=\Z$. Put $A:=E^{\oplus 2}$ and 
  $R:=\End(A)=\Mat_2(\Z)$. Let
\[ \calO:=\left \{
\begin{pmatrix}
  a & b \\ 0 & c
\end{pmatrix}\Big| \,a,b,c\in \Z\right \}. \]
Put $M_2=\Z^{\oplus 2}=\Z e_1+\Z e_2$, the free module of row
$\Z$-vectors endowed with the natural right $\calO$-module. 
Let $M_1:=\Z e_2$ be the invariant $\calO$-submodule of $M_2$ and
$\alpha:M_1\to M_2$ be the inclusion map. Let $M_3=\Z$ be the cokernel
of $\alpha$ and $\beta:M_2\to M_3$, $(a,b)\mapsto a$, be the natural
projection. The induced action of $\calO$ on $M_3$ is given by 
$1\cdot 
\begin{pmatrix}
  a & b \\ 0 & c
\end{pmatrix}= a.$
We have 
\[ M_1=e_2 \calO=\calO/I_1, \quad M_2=e_1 \calO=\calO/I_2, \quad M_3=1
\calO=\calO/I_3, \]
where
\[ I_1=\left \{\begin{pmatrix}
  a & b \\ 0 & 0
\end{pmatrix}\right \}, \quad I_2=\left \{\begin{pmatrix}
  0 & 0 \\ 0 & c
\end{pmatrix}\right \}, \quad I_3=\left \{\begin{pmatrix}
  0 & b \\ 0 & c
\end{pmatrix}\right \}. \]
Now $M_i\otimes_\calO A\simeq (M_i\otimes_\calO R) \otimes_R A$ and
$M_i\otimes_\calO R\simeq R/I_iR$. We easily see that
\[ I_1R=\left \{\begin{pmatrix}
  a & b \\ 0 & 0
\end{pmatrix}\right \}, 
\quad I_2R=\left \{\begin{pmatrix}
  0 & 0 \\ b' & c
\end{pmatrix}\right \}, \quad I_3R=\left \{\begin{pmatrix}
  a & b \\ b' & c
\end{pmatrix}\right \}. \]    
Therefore, $M_1\otimes_\calO A\simeq E$, $M_2\otimes_\calO A\simeq E$
and $M_3\otimes_\calO A=0$ and the morphism $\alpha_A$ is an isogeny.

In this example we see how to compute $M\otimes_\calO A$ when $M$ is
monogenetic. It seems that if $M_2$ is indecomposable and $M_1\subset
M_2$ is a nonzero $\calO$-submodule, then the morphism $\alpha_A$ 
induced from
the inclusion $\alpha$ is an isogeny. 
\end{eg}

%(3) should be correct if $\calO$ is an order of a semi-simple
%    subalgebra.

% If $\alpha:M_1\to M_2$ is a map of right finite $\calO$-modules, then
% there is a natural morphism $\alpha_A: M_1\otimes_\calO A\to
% M_2\otimes_\calO A$ of abelian varities. The morphism $\alpha_A$ is an
% isogeny if and only if $\alpha_\Q:M_1\otimes \Q\to M_2\otimes \Q$ is
% an isomorphism. 

\subsection{Relations with Tate and \dieu modules}
\label{sec:62}
Let $A$, $\calO$ and $M$ be as above. For any prime $\ell\neq
\char(k)$, denote by $T_\ell(A)$ the $\ell$-adic Tate module of $A$
viewed as a $\Gal(k^s/k)$-module, where $k^s$ is a separable closure
of $k$. If $\char(k)=p>0$ and $k$ is perfect, then denote by $\bfM(A)$
the covariant \dieu module of $A$.  

\begin{prop}\label{64}
  Let $A$, $\calO$ and $M$ be as in \S~\ref{sec:61}. Then there exist
  surjective maps 
\[ \xi_{\ell,M}: M\otimes_\calO T_\ell(A)\to
  T_\ell(M\otimes_\calO A) \] 
with finite kernel for any prime $\ell\neq \char(k)$, and a surjective map
\[ \xi_{p,M}: M\otimes_\calO \bfM(A)\to
  \bfM(M\otimes_\calO A) \]
with finite length kernel if $\char(k)=p>0$ and $k$ is
perfect. Moreover, if $\alpha:M_1\to M_2$ 
is a map of finite $\calO$-modules, then the following diagram for
the associated Tate modules for any prime $\ell\neq \char(k)$
\begin{equation}
  \label{eq:62}
\begin{CD}
  M_1\otimes T_\ell(A) @>{\alpha\otimes 1}>> M_2 \otimes T_\ell(A) \\
  @V{\xi_{\ell,M_1}}VV                        @V{\xi_{\ell,M_2}}VV \\
  T_\ell(M_1\otimes A) @>{T_\ell(\alpha_A)}>> T_\ell(M_2\otimes A) 
\end{CD}  
\end{equation}
(resp. for the associated \dieu modules) commutes.
\end{prop}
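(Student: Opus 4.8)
The plan is to compute both sides from a finite $\calO$-presentation of $M$ and to exploit the (near-)exactness of the Tate and Dieudonné functors. Fix a presentation $\calO^{r}\xrightarrow{\alpha}\calO^{s}\to M\to 0$ of right $\calO$-modules, as in the proof of Lemma~\ref{61}, and apply $\otimes_\calO A$ to obtain a right-exact sequence of abelian varieties $A^{r}\xrightarrow{\alpha_A}A^{s}\to M\otimes_\calO A\to 0$. Write $\alpha_A=\iota\circ p$ with $p\colon A^{r}\twoheadrightarrow I$ the surjection onto the image abelian subvariety $I\subseteq A^{s}$ and $\iota\colon I\hookrightarrow A^{s}$ the inclusion, so that $M\otimes_\calO A=A^{s}/I$.

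The one substantial input is that $T_\ell$ (for $\ell\neq\char(k)$), resp. the covariant Dieudonné functor $\bfM$ (when $k$ is perfect of characteristic $p$), is exact on short exact sequences of abelian varieties. For $T_\ell$ this follows by applying the snake lemma to multiplication by $\ell^{n}$, an fppf epimorphism on every abelian variety: a short exact sequence $0\to B\to C\to D\to 0$ gives $0\to B[\ell^{n}]\to C[\ell^{n}]\to D[\ell^{n}]\to 0$, which stays exact on $k^s$-points since the $\ell^{n}$-torsion schemes are étale, and one then takes the projective limit over $n$. For $\bfM$ one uses instead that $G\mapsto G[p^{\infty}]$ and $\bfM$ are both exact. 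Applying this to $0\to I\to A^{s}\to M\otimes_\calO A\to 0$ yields $0\to T_\ell I\to (T_\ell A)^{s}\to T_\ell(M\otimes_\calO A)\to 0$, and similarly with $\bfM$. On the other hand, tensoring the $\calO$-presentation by the left $\calO$-module $T_\ell A$ and noting that $T_\ell(\alpha_A)$ is exactly the matrix $\alpha$ acting through $\calO\to\End(T_\ell A)$, we get $M\otimes_\calO T_\ell A=(T_\ell A)^{s}/\im T_\ell(\alpha_A)$, and likewise $M\otimes_\calO\bfM(A)=\bfM(A)^{s}/\im\bfM(\alpha_A)$. Since $T_\ell$ (resp. $\bfM$) is left exact, $\alpha_A=\iota\circ p$ forces $\im T_\ell(\alpha_A)\subseteq T_\ell I$ (resp. $\im\bfM(\alpha_A)\subseteq\bfM(I)$), so there is a canonical surjection $\xi_{\ell,M}\colon M\otimes_\calO T_\ell A\twoheadrightarrow T_\ell(M\otimes_\calO A)$ with kernel $T_\ell I/\im T_\ell(p)$, and analogously $\xi_{p,M}$ with kernel $\bfM(I)/\im\bfM(p)$. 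Finally $p\colon A^{r}\to I$ is surjective, hence factors as the quotient $A^{r}\to A^{r}/N$ by the maximal abelian subvariety $N$ of $\ker p$ followed by the isogeny $A^{r}/N\to I$; therefore $T_\ell(p)$ has image of finite index in $T_\ell I$ and $\bfM(p)$ has image of finite colength in $\bfM(I)$ (using that $\bfM$ is injective with finite-length cokernel on isogenies). Thus $\ker\xi_{\ell,M}$ is finite and $\ker\xi_{p,M}$ has finite length.

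For the compatibility assertion, given $\alpha\colon M_1\to M_2$ one chooses presentations of $M_1$ and $M_2$ together with a morphism of presentations lifting $\alpha$ (possible because the free modules are projective); applying $\otimes_\calO A$ produces a morphism between the two short exact sequences $0\to I_j\to A^{s_j}\to M_j\otimes_\calO A\to 0$, and applying $T_\ell$ (resp. $\bfM$) and passing to cokernels shows that $\xi_{\ell,M}$ (resp. $\xi_{p,M}$) is functorial, i.e. the square (\ref{eq:62}) and its Dieudonné analogue commute. That $\xi$ is independent of the chosen presentation is the standard comparison argument.

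I expect the main technical point to be the exactness of $T_\ell$ and $\bfM$ on short exact sequences of abelian varieties, and, within the argument above, controlling the possibly non-smooth and non-connected kernel of $p$ in characteristic $p$; note also that although $M\otimes_\calO V_\ell A\cong V_\ell(M\otimes_\calO A)$ follows formally from the semisimplicity of the isogeny category, upgrading this to the \emph{integral} surjectivity of $\xi_{\ell,M}$ is precisely what the presentation computation buys us. That bookkeeping is routine but needs care since $\calO$ is noncommutative.
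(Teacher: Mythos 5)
Your proof is correct and follows essentially the same route as the paper: choose a finite $\calO$-presentation of $M$, pass to the short exact sequence $0\to I\to A^s\to M\otimes_\calO A\to 0$ with $I$ the image abelian subvariety, apply the exactness of $T_\ell$ (resp.\ of $\bfM$ over a perfect field), and compare with the right-exact sequence obtained by tensoring the presentation with $T_\ell(A)$ (resp.\ $\bfM(A)$), identifying the kernel of $\xi$ with $T_\ell I/\im T_\ell(p)$. The differences are only presentational: you spell out the exactness of $T_\ell$ and $\bfM$ and the finiteness of the cokernel of $T_\ell(p)$ (which the paper leaves implicit), and you get the commutativity of (\ref{eq:62}) by lifting $\alpha$ to a morphism of presentations, whereas the paper deduces it from the induced isomorphism $M\otimes_\calO T_\ell(A)/(\mathrm{torsion})\simeq T_\ell(M\otimes_\calO A)$.
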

\begin{proof}
  Choose a finite presentation of $M$ as $\calO$-modules:
  \begin{equation}
    \label{eq:63}
\begin{CD}
\calO^r @>{\alpha}>> \calO^s @>>> M @>>> 0   
\end{CD}    
  \end{equation}
and get a morphism $\alpha_A: A^r\to A^s$ of abelian varieties over
$k$. Let $B$ be the image abelian variety of $\alpha_A$. We have a
short exact sequence of abelian varieties over $k$:
\[ 0\to B \to A^s \to M\otimes_\calO A\to 0. \]
This gives rise to a short exact sequence of Tate modules 
\[  0\to T_\ell(B) \to T_\ell(A^s) \to T_\ell(M\otimes_\calO A)\to 0 \]
and \dieu modules
\[ 0\to \bfM(B) \to \bfM(A^s) \to \bfM(M\otimes_\calO A)\to 0. \] 
On the other hand, tensoring the exact sequence (\ref{eq:63}) over
the Tate module $T_\ell(A)$ and the \dieu module $\bfM(A)$,
respectively, 
we get exact sequences
\[ \begin{CD}
T_\ell(A)^{\oplus r} @>{T_\ell(\alpha)}>> T_\ell(A)^{\oplus s} @>>> 
M\otimes_\calO T_\ell(A) @>>> 0,   
\end{CD} \] 
and 
\[ \begin{CD}
\bfM(A)^{\oplus r} @>{\bfM(\alpha)}>> \bfM(A)^{\oplus s} @>>> 
M\otimes_\calO \bfM(A) @>>> 0.   
\end{CD} \]
This gives a surjective map $\xi_{\ell,M}: M\otimes T_\ell(A)\to
T_\ell(M\otimes_\calO A)$ and a surjective map $\xi_{p,M}: M\otimes
\bfM(A)\to \bfM(M\otimes_\calO A)$. The kernel of the map $\xi_{\ell,M}$
(resp. $\xi_{p,M}$) is the cokernel of the map $T_\ell(\alpha):
T_\ell(A)^{\oplus r}\to T_\ell(B)$ (resp. $\bfM(\alpha):
\bfM(A)^{\oplus r}\to \bfM(B)$). This proves the first part of the
proposition.  

The natural map $\xi_{\ell,M}$ induces an natural isomorphism 
\[ \xi_{\ell,M}:M\otimes_\calO T_\ell(A)/({\rm torsion})\simeq
T_\ell(M\otimes_\calO A).\]
 From this it follows that the diagram (\ref{eq:62}) commutes. The
proof of the assertion for \dieu modules is the same. \qed 
\end{proof}

\subsection{Computation of $M\otimes_\calO A$ up to isogeny.}
\label{sec:63}

Let $[A]$ denote the isogeny class of an abelian variety $A$ over a
field $k$. Let $C$ be a $\Q$-subalgebra of the semi-simple algebra 
$\End^0_k(A)$ and let $V$ be a finite right $C$-module. It is not hard
to see from Lemma~\ref{62} that the isogeny class $[M\otimes_\calO A]$,
for a $\Z$-order $\calO$ in $C$ contained in $\End_k(A)$ and 
an $\calO$-lattice $M$, does not depend on the choice of $\calO$ and
$M$ (and also the choice of $A$ in $[A]$). 
We denote this isogeny class by $V\otimes_C [A]$.

Write $[A]=[\prod_{i=1}^r B_i^{n_i}]$ into a finite product of 
isotypic components, where
each $B_i$ is a $k$-simple abelian variety and $B_i$ is not isogenous to
$B_j$ for $i\neq j$. 
The endomorphism algebra $E:=\End^0(A)\simeq \prod_{i=1}^r
\Mat_{n_i}(D_i)$ decomposes into the product of its simple factors,
where $D_i$ is the endomorphism algebra of $B_i$. Observe that
\begin{equation}
  \label{eq:64}
  V\otimes_C [A]= V\otimes_C  E\otimes_E [A].
\end{equation}
If we write 
\begin{equation}
  \label{eq:65}
  V\otimes_C  E\simeq \bigoplus_{i=1}^r I_i^{m_i}
\end{equation}
as $E$-modules, where $I_i$ is a minimal non-zero ideal of
$\Mat_{n_i}(D_i)$, then  
\[ V\otimes_C [A]\simeq \left (\bigoplus_{i=1}^r I_i^{\oplus m_i}
\right) \otimes_E
[A]=\prod_{i=1}^r I_i^{\oplus m_i}\otimes_{\Mat_{n_i}(D_i)}
  [B_i^{n_i}]\simeq \prod_{i=1}^r [B_i^{m_i}].  \]

Therefore, the computation of the abelian variety 
$M\otimes_\calO A$ up to isogeny is  reduced to
the simple algebra problem (\ref{eq:65}) of decomposing the module
$V\otimes_C E$ into simple $E$-modules.
 
The dimension of $M\otimes_\calO A$ is given by the formula:
\begin{equation}
  \label{eq:66}
  \dim M\otimes_\calO A=\sum_{i=1}^r m_i \dim B_i, 
\end{equation}
where $m_i$ are the integers in (\ref{eq:65}).

\subsection{On minimal isogenies for abelian varieties.}
\label{sec:64}
The main result of this section is the following theorem.

\begin{thm}\label{65}
  Let $A_0$ be an abelian variety over $k$, $\calO$ a subring of
  $\End_k(A_0)$, and $\calO'$ a $\Z$-order of $\calO\otimes_\Z \Q$
  containing 
  $\calO$. Then the isogeny $\iota: A_0\to \calO'\otimes_\calO A_0$ 
  satisfies the following property: for any pair $(\varphi_1,A_1)$
  where $\varphi_1:A_0\to A_1$ is an isogeny of abelian varieties over
  $k$ such that with the identification $\End^0(A_0)=\End^0(A_1)$ by
  $\varphi_1$ one has $\calO'\subset \End(A_1)$, then there is a
  unique isogeny $\alpha:\calO'\otimes_\calO A_0\to A_1$ over $k$ such
  that $\varphi_1=\alpha\circ \iota$. 
\end{thm}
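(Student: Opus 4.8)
The plan is to construct $\alpha$ explicitly on the functor of points, using the description of $\calO'\otimes_\calO A_0$ from Lemma~\ref{61}, and to obtain uniqueness for free from the fact that an isogeny is an epimorphism. First recall what $\iota$ is: since $\calO'$ is a $\Z$-order in $\calO\otimes_\Z\Q$ it is a finitely generated $\Z$-module, hence a finite right $\calO$-module, and $\iota\colon A_0=\calO\otimes_\calO A_0\to\calO'\otimes_\calO A_0$ is the morphism attached to the inclusion $\calO\hookrightarrow\calO'$ of right $\calO$-modules; as the induced map $\calO\otimes\Q\to\calO'\otimes\Q$ is the identity, Lemma~\ref{62} (3) shows $\iota$ is an isogeny, in particular faithfully flat and surjective. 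Uniqueness of $\alpha$ then follows at once: if $\alpha$ and $\alpha'$ both satisfy $\varphi_1=\alpha\circ\iota=\alpha'\circ\iota$, then $(\alpha-\alpha')\circ\iota=0$, and since $\iota$ is an epimorphism we get $\alpha=\alpha'$. (The same bookkeeping shows that any morphism $\alpha$ with $\varphi_1=\alpha\circ\iota$ is automatically an isogeny: it is surjective because $\varphi_1$ is, and its source $\calO'\otimes_\calO A_0$ and target $A_1$ both have dimension $\dim A_0$.)

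For existence I would produce $\alpha$ as the fppf sheafification of a morphism of abelian group presheaves. The isogeny $\varphi_1$ induces a ring isomorphism $\varphi_{1,*}\colon\End^0(A_0)\isoto\End^0(A_1)$, $f\mapsto\varphi_1\circ f\circ\varphi_1^{-1}$, and the hypothesis on the pair $(\varphi_1,A_1)$ says exactly that $\varphi_{1,*}$ carries $\calO'$ into $\End(A_1)$; thus each $c\in\calO'$ acts on $A_1$ through the genuine endomorphism $\varphi_{1,*}(c)$. For a $k$-scheme $S$ set
\[ \beta_S\colon\calO'\otimes_\calO A_0(S)\longrightarrow A_1(S),\qquad c\otimes x\longmapsto\varphi_{1,*}(c)\bigl(\varphi_1(x)\bigr). \]
This is $\Z$-bilinear in $(c,x)$ because $\varphi_{1,*}$ is a ring homomorphism and $\varphi_{1,*}(c),\varphi_1$ are homomorphisms of groups, and it is balanced over $\calO$ because for $a\in\calO$ one has $\varphi_{1,*}(a)\circ\varphi_1=\varphi_1\circ a$ in $\Hom(A_0,A_1)$ (the two sides agree after $\otimes\Q$, and $\Hom$ injects into $\Hom^0$), whence $\varphi_{1,*}(ca)(\varphi_1 x)=\varphi_{1,*}(c)\bigl(\varphi_{1,*}(a)(\varphi_1 x)\bigr)=\varphi_{1,*}(c)\bigl(\varphi_1(ax)\bigr)$. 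Hence $\beta_S$ is well defined and evidently functorial in $S$, so it defines a morphism $\calT'\to A_1$ of abelian group presheaves, where $\calT'(S)=\calO'\otimes_\calO A_0(S)$. Sheafifying and invoking Lemma~\ref{61} gives a morphism of abelian varieties $\alpha\colon\calO'\otimes_\calO A_0\to A_1$; evaluating $\beta_S$ at $1\otimes x$ yields $\varphi_{1,*}(1)(\varphi_1 x)=\varphi_1(x)$, so $\alpha\circ\iota=\varphi_1$, and by the remark above $\alpha$ is an isogeny.

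I expect the only genuinely delicate point to be the bookkeeping around the identification $\End^0(A_0)=\End^0(A_1)$ induced by $\varphi_1$: one must be careful that $\varphi_{1,*}(c)$ is an honest morphism of abelian varieties for every $c\in\calO'$ (this is precisely the hypothesis imposed on the pair $(\varphi_1,A_1)$, and it is what makes $\beta_S$ land in $A_1(S)$ and be $\calO$-balanced), and that the compatibility $\varphi_{1,*}(a)\circ\varphi_1=\varphi_1\circ a$ is an identity in $\Hom(A_0,A_1)$, not merely in $\Hom^0(A_0,A_1)$. Everything else is formal: a morphism of presheaves induces one on associated fppf sheaves, $A_0$ and $A_1$ are already sheaves, and the representability statement in Lemma~\ref{61} turns the sheafified $\beta$ into a morphism of abelian varieties.
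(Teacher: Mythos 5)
Your proof is correct, and it takes a genuinely different route from the paper. The paper deduces Theorem~\ref{65} from Proposition~\ref{66}: after reducing to a ground field finitely generated over its prime field, it works with the Tate modules $T_\ell(A_0)$ (and, in characteristic $p$, the Dieudonn\'e module over $k^{\rm pf}$), uses the theorems of Tate--Zarhin--Faltings on endomorphisms together with a theorem of Tate realizing the lattices $T'_\ell$ and $M'$ by an abelian variety, obtains the minimal isogeny over a finite purely inseparable extension, identifies it with $\calO'\otimes_\calO A_0$ via Proposition~\ref{64}, and finally descends the comparison isogeny $\alpha$ to $k$ by Chow's Lemma (Lemma~\ref{67}). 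You instead verify the universal property directly on the functor of points: the hypothesis that the identification by $\varphi_1$ carries $\calO'$ into $\End(A_1)$ is exactly what makes the pairing $(c,x)\mapsto \varphi_{1,*}(c)(\varphi_1(x))$ well defined and $\calO$-balanced (your care that $\varphi_{1,*}(a)\circ\varphi_1=\varphi_1\circ a$ holds in $\Hom(A_0,A_1)$, not merely in $\Hom^0$, is the right point, and it is fine since $\Hom(A_0,A_1)$ is torsion-free), and sheafifying against Lemma~\ref{61} produces $\alpha$ with $\alpha\circ\iota=\varphi_1$; uniqueness and the isogeny properties follow formally from Lemma~\ref{62}~(3) and surjectivity of $\iota$. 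What each approach buys: yours is shorter and more elementary, works uniformly over an arbitrary field in all characteristics, and avoids both the deep finiteness theorems and the descent step through $k^{\rm pf}$, exhibiting $\iota$ as the literal universal arrow of Serre's tensor construction; the paper's route is heavier but produces along the way the explicit local description of $\calO'\otimes_\calO A_0$ (its Tate and Dieudonn\'e lattices $T'_\ell$ and $M'$), which is the form of the statement used in applications such as \cite{yu:endo}.
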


Theorem~\ref{ab} is the special case of Theorem~\ref{65} where
$\calO=\End_k(A_0)$ and $\calO'$ is a maximal order containing $\calO$.
We first prove a weaker statement.

\begin{prop}\label{66}
  Let $A_0$, $\calO$, $\calO'$ be as in Theorem~\ref{65}.  
  Then there exist a finite purely inseparable extension $k'$ of $k$, 
  an abelian variety $A'$ over $k'$, and an isogeny $\varphi:A_0\otimes
  k'\to A'$ over $k'$ such that with the identification
  $\End^0_{k'}(A\otimes k')=\End_{k'}^0(A')$ one has 
   $\calO'\subset \End_{k'}(A')$. Moreover, the
  isogeny $\varphi$ can be chosen to be minimal with respect to
  $\calO'$ in the following sense: if $(k_1, A_1, \varphi_1)$ is
  another triple with the property $\calO'\subset \End_{k_1}(A_1)$, 
  then there are a 
  finite purely inseparable extension $k''$ of $k$ containing both 
  $k'$ and $k_1$ 
  and a unique isogeny 
\[ \alpha: A'\otimes_{k'} k''\to A_1\otimes_{k_1} k''  \]
  such that $\varphi_{1, k''}=\alpha\circ \varphi_{k''}$. 
\end{prop}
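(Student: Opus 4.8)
The plan is to build the abelian variety $A'$ as the Serre tensor product $\calO' \otimes_\calO A_0$, using the construction and properties established in Lemma~\ref{61} and Lemma~\ref{62}, and then to verify the two required properties (the $\calO'$-action and the universal/minimality property) directly. Note that Proposition~\ref{66} is proved here as an intermediate step toward Theorem~\ref{65}, so although it is phrased in terms of a purely inseparable base extension $k'$, one expects in fact to take $k' = k$; the passage to $k'$ is a red herring that the subsequent argument for Theorem~\ref{65} will remove. I will simply set $A' := \calO' \otimes_\calO A_0$ over $k$ itself.

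First I would construct the isogeny. Regard $\calO'$ as a finite right $\calO$-module via right multiplication; since $\calO \subset \calO'$ with $\calO' \otimes_\Z \Q = \calO \otimes_\Z \Q$ (so that $\calO \hookrightarrow \calO'$ has finite cokernel as $\Z$-modules), Lemma~\ref{62}(3) applied to the inclusion map $\alpha : \calO \to \calO'$ shows that the induced morphism $\iota := \alpha_{A_0} : A_0 = \calO \otimes_\calO A_0 \to \calO' \otimes_\calO A_0 = A'$ is an isogeny. The left multiplication action of $\calO'$ on itself commutes with the right $\calO$-action, hence by functoriality of the Serre construction induces a ring homomorphism $\calO' \to \End_k(A')$; one checks, using that tensoring $A_0(S)$ is additive, that this is compatible with $\iota$, i.e. that under the identification $\End^0(A_0) = \End^0(A')$ induced by $\iota$ the subring $\calO'$ sits inside $\End_k(A')$. (Concretely: $\iota \circ a = (\text{image of } a) \circ \iota$ in $\End^0$ for $a \in \calO'$, where the left side uses $a$ acting on $A_0$ via $\calO' \supset \calO \ni$ — wait, more precisely $a \in \calO'$ acts on $\End^0(A_0)$ by the given identification, and one verifies the diagram.)

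Next I would establish the universal property. Given $(\varphi_1, A_1)$ with $\varphi_1 : A_0 \to A_1$ an isogeny such that $\calO' \subset \End_k(A_1)$ under the identification via $\varphi_1$, I want a unique $\alpha : A' \to A_1$ with $\varphi_1 = \alpha \circ \iota$. The key point is that $\varphi_1$, viewed as an element of $\Hom(A_0, A_1)$, is $\calO$-linear for the natural $\calO$-module structures (left multiplication) on both sides — indeed $\calO \subset \End_k(A_0)$ maps into $\End_k(A_1)$ via $\varphi_1$ and $\calO' \subset \End_k(A_1)$ restricts to it — so $\varphi_1$ factors through the right $\calO$-module $\calO'$: precisely, the map $M := \calO' \to \Hom(A_0, A_1)$, $m \mapsto (m \cdot \text{on } A_1) \circ \varphi_1$, is a homomorphism of right $\calO$-modules from $\calO'$ (equivalently, $\calO'$ acting on $A_1$ already extends the $\calO$-action), and by the defining adjunction/representability of the Serre tensor product $A' = \calO' \otimes_\calO A_0$ — namely $\Hom_k(\calO' \otimes_\calO A_0, A_1) = \Hom_\calO(\calO', \Hom_k(A_0, A_1))$ — this corresponds to a unique $\alpha : A' \to A_1$ with $\alpha \circ \iota = \varphi_1$. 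That $\alpha$ is an isogeny follows from $\dim A' = \dim A_1$ ($= \dim A_0$, since $\iota$ and $\varphi_1$ are isogenies) together with, e.g., surjectivity of $\alpha$, which holds because $\alpha \circ \iota = \varphi_1$ is surjective. Uniqueness of $\alpha$ is automatic since $\iota$ is an epimorphism in the isogeny category.

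The main obstacle I anticipate is making the adjunction $\Hom_k(\calO' \otimes_\calO A_0, A_1) = \Hom_\calO(\calO', \Hom_k(A_0, A_1))$ precise and legitimate at the level of abelian varieties (as opposed to fppf sheaves): the Serre construction is defined as an fppf sheafification, so I must check that $\Hom$ of abelian varieties agrees with $\Hom$ of the associated fppf sheaves (which is standard, as abelian varieties form a full subcategory of fppf abelian sheaves), and that the functor $M \mapsto M \otimes_\calO A_0$ on the sheaf side genuinely has the claimed adjointness — this can be done by choosing a finite presentation $\calO^r \xrightarrow{\beta} \calO^s \to \calO' \to 0$ as in Lemma~\ref{61} and using right-exactness plus the case $M = \calO$ (where it is tautological). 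Handling the non-commutativity of $\calO$ and $\calO'$ carefully throughout — keeping track of which actions are left and which are right — is the other place where one must be attentive, but it introduces no real difficulty beyond bookkeeping. Finally, regarding the purely inseparable extension in the statement: since the above works over $k$ directly, one takes $k' = k''= k$, $\varphi = \iota$, and the minimality clause reduces to the universal property just proved.
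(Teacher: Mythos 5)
Your proposal is correct, but it proves the statement by a genuinely different route than the paper. The paper establishes Proposition~\ref{66} by lattice-theoretic means: after reducing to a ground field finitely generated over its prime field, it forms the $\calO'_\ell$-span $T'_\ell$ of $T_\ell(A_0)$ inside $V_\ell$ (Galois-stable since $\calO'$ commutes with the Galois action, invoking Tate--Zarhin--Faltings) and, in characteristic $p$, the $\calO'_p\otimes W$-span $M'$ of the Dieudonn\'e module over the perfect closure $k^{\rm pf}$, then produces $A'$ from these lattices via the correspondence between over-lattices and isogenies; the finite purely inseparable extension $k'$ in the statement is exactly the price of working over $k^{\rm pf}$ at $p$. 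The Serre tensor product enters the paper only afterwards, in Theorem~\ref{65}, where minimality of $\iota:A_0\to\calO'\otimes_\calO A_0$ is checked by comparing Tate and Dieudonn\'e modules via Proposition~\ref{64} and descending $\alpha$ to $k$ by Chow's Lemma~\ref{67}. You instead set $A'=\calO'\otimes_\calO A_0$ over $k$ from the start and obtain the minimality clause from the tensor--hom adjunction $\Hom_k(\calO'\otimes_\calO A_0,A_1)\cong\Hom_\calO(\calO',\Hom_k(A_0,A_1))$ (legitimate, as you note, since abelian varieties embed fully faithfully into fppf abelian sheaves and sheafification does not change Homs into a sheaf), with $\iota$ an isogeny by Lemma~\ref{62}(3) and uniqueness of $\alpha$ from surjectivity of $\iota$; the right $\calO$-linearity of $m\mapsto m_{A_1}\circ\varphi_1$ is exactly the compatibility hypothesis on $\varphi_1$. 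Your route is shorter and uniform in all characteristics, avoiding Tate/Dieudonn\'e modules, the reduction to finitely generated fields, and Chow's lemma, and it yields the sharper conclusion $k'=k$ (i.e.\ Theorem~\ref{65} directly); what the paper's argument buys is the explicit lattice description of $T_\ell(A')$ and $\bfM(A')$, which is what the author needs for the local variant cited from \cite{yu:endo}. One small point to tighten: the minimality clause of Proposition~\ref{66} allows triples $(k_1,A_1,\varphi_1)$ with $k_1\neq k$, so rather than taking $k''=k$ you should base-change your adjunction argument to $k_1$ (using that the Serre construction commutes with base field extension) and take $k''=k_1$; this is immediate, but as written your last sentence overlooks it.
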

\begin{proof}
Replacing $k$ by a  finitely generated subfield over its prime field
and replacing $A_0$ by a model of $A_0$ defined 
over this subfield whose endomorphism ring
is equal to $\End_k(A_0)$, we may assume that the ground field $k$
is finitely generated over its prime field. Put
$\calG:=\Gal(k^{s}/k)$, where $k^s$ is a separable closure of
$k$. For any prime $\ell\neq \char k$, let $T_\ell:=T_\ell(A_0)$ be
the associated  Tate module of $A_0$, and
\[ \rho_{\ell}: \calG\to \Aut(T_\ell) \]
be the associated Galois representation. Let $G_\ell\subset
\Aut(T_\ell)$ %\subset \End(T_\ell)$ 
be the image of the map
$\rho_\ell$, and write $\calO_\ell:=\calO\otimes_{\Z} \Z_\ell$
and $\calO'_\ell:=\calO' \otimes_{\Z} \Z_\ell$, respectively. 
By the theorem
of Tate, Zarhin and Faltings \cite{tate:eav,zarhin:end,faltings:end} on
homomorphisms of abelian varieties, we have 
\[ \calO_\ell\subset \End_k(A_0)\otimes \Z_\ell =C_{\End(T_\ell)}
G_\ell, \] 
the centralizer of $G_\ell$ in $\End(T_\ell)$. Let $T'_\ell$ be the
$\calO'_\ell$-submodule in $V_\ell:=T_\ell\otimes_{\Z_\ell} \Q_\ell$
generated by $T_\ell$. Since the action of $\calO_\ell$ on $T_\ell$ 
commutes with that of $G_\ell$, the lattice $T'_\ell$ is stable under
the $G_\ell$-action. As $\calO'_\ell=\calO_\ell$ for almost all primes
$\ell$, we have $T'_\ell=T_\ell$ for such primes $\ell$. If $\char k
=0$, then by a theorem of Tate, there are an abelian variety $A'$ over
$k$ and an isogeny $\varphi:A_0\to A'$ over $k$ such that the image of
$T_\ell(A')$ in $V_\ell$ by $\varphi$ is equal to $T'_\ell$ for all
primes $\ell$. We have $\calO'_\ell\subset
\End_{G_\ell}(T'_\ell)=\End_k(A')\otimes \Z_\ell$, and hence
$\calO'\subset \End_k(A')$.  

Suppose $\char k=p>0$. Let $k^{\rm pf}$ be the perfect closure of
$k$. Let $M_0:=\bfM(A_0\otimes_k k^{\rm pf})$ be the covariant \dieu module
of $A_0\otimes_k k^{\rm pf}$, on
which the ring $\calO_p:=\calO\otimes_{\Z} \Z_p$ acts. 
Let $W$ be the ring of Witt vectors over $k^{\rm
  pf}$ and let $B(k^{\rm pf})$ be its fraction field.
Let $M'$ be the $\calO'_p \otimes_{\Z_p} W$-submodule of 
$M_0\otimes_W B(k^{\rm  pf})$ generated by $M_0$. 
Since the action of $\calO_p$ on $M_0$
commutes with the Frobenius map $F$, the submodule $M'$ is 
a \dieu module containing $M_0$. By a theorem of Tate, 
there are an abelian variety $A$ over
$k^{\rm pf}$ and an isogeny $\varphi_{k^{\rm pf}}:
A_0\otimes_k k^{\rm pf} \to A$
over $k^{\rm pf}$ such that the image of
the Tate module $T_\ell(A)$ in  $V_\ell$ and the \dieu module $\bfM(A)$
in $M_0\otimes_W B(k^{\rm pf})$) by the isogeny $\varphi$ 
is equal to $T'_\ell$ for all
primes $\ell\neq p$ and equal to $M'$ at the prime $p$, respectively. 
Similarly, we show $\calO'\subset \End_{k^{\rm
    pf}}(A)$. Since $\ker \varphi_{k^{\rm pf}}$ is of finite
type, there is a model $A'$ of $A$ over a finite
extension $k'$ of $k$ in $k^{\rm pf}$ so that the isogeny
$\varphi_{k^{\rm pf}}$ is defined over $k'$. 

If we have another triple $(k_1,A_1,\varphi_1)$ with
$\calO'\subset \End_{k_1}(A)$, then the Tate module $T_\ell(A_1)$
viewed as a lattice in $V_\ell$ by the isogeny
$\varphi_1$ is an $\calO'_\ell [G_\ell]$-stable lattice, 
and hence it contains $T'_\ell$. Similarly, one shows that the \dieu
module $M_1$ of $A_1\otimes k^{\rm pf}$ as a \dieu sublattice in
$M_0\otimes B(k^{\rm pf})$ by the isogeny
$\varphi_1$ contains $M_0$. 
Therefore, there is an isogeny $\alpha:A'\otimes_{k'}
k^{\rm pf}\to A_1\otimes_{k_1} k^{\rm pf}$ such that
$\varphi_{1,k^{\rm pf}}=\alpha \circ \varphi_{k^{\rm pf}}$. Clearly,
the morphism $\alpha$ is defined over some finite extension of $k$ in
$k^{\rm pf}$ containing $k'$ and $k_1$. This proves the proposition. \qed
\end{proof}

\begin{lemma}[Chow]\label{67}
Let $A$ and $B$ are two abelian varieties over a field $k$, and let
$K/k$ be a primary field extension (i.e. $k$ is separably
algebraically closed in $K$). Then the natural map 
\[ \Hom_k(A, B) \to \Hom_K(A\otimes_k K, B\otimes_k K) \]
is bijective.
\end{lemma}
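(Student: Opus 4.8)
The plan is to reduce, in two stages, to the cases where $K/k$ is \emph{regular} (separable and primary) and where $K/k$ is finite purely inseparable, and to settle each by a descent argument. Injectivity is formal: $\Spec K\to\Spec k$ is faithfully flat, so a $k$-homomorphism $f\colon A\to B$ is determined by its graph $\Gamma_f\subseteq A\times_kB$, the unique closed subscheme pulling back to $\Gamma_{f_K}$. For surjectivity, fix $g\in\Hom_K(A_K,B_K)$; since $A_K$, $B_K$, $g$ and the (closed) condition that $g$ be a homomorphism are of finite presentation over $K$, a limit argument descends $g$ to $\Hom_{K_0}(A_{K_0},B_{K_0})$ for a finitely generated subextension $K_0/k$, which is again primary because any element of $K_0$ separable algebraic over $k$ lies in $K$, hence in $k$. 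So assume $K/k$ finitely generated, and let $k^{\mathrm{pf}}$ be the perfect closure of $k$. One checks that $Kk^{\mathrm{pf}}/k^{\mathrm{pf}}$ is again primary --- if $\gamma\in Kk^{\mathrm{pf}}$ is algebraic over $k^{\mathrm{pf}}$ then $\gamma^{p^m}\in K$ is separable algebraic over $k$ for suitable $m$, hence lies in $k$, so $\gamma\in k^{\mathrm{pf}}$ --- and it is separable over $k^{\mathrm{pf}}$, hence regular. Thus one has a chain of injective base-change maps $\Hom_k(A,B)\hookrightarrow\Hom_K(A_K,B_K)\hookrightarrow\Hom_{Kk^{\mathrm{pf}}}(A_{Kk^{\mathrm{pf}}},B_{Kk^{\mathrm{pf}}})\isoto\Hom_{k^{\mathrm{pf}}}(A_{k^{\mathrm{pf}}},B_{k^{\mathrm{pf}}})\isoto\Hom_k(A,B)$, the third arrow by faithful flatness together with the infinitesimal rigidity of abelian schemes (over the possibly non-reduced ring $K\otimes_kk^{\mathrm{pf}}$), the first isomorphism being the regular case for $Kk^{\mathrm{pf}}/k^{\mathrm{pf}}$ and the second the purely inseparable case for $k^{\mathrm{pf}}/k$ (a filtered union of finite ones, which reduces to the finite case by one more limit argument). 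Since the composite is the identity and every map is injective, all are bijective; in particular the first one is. So it suffices to treat the regular case and the finite purely inseparable case.

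For the finite purely inseparable case, pick $q=p^n$ with $K^q\subseteq k$, and given $g\colon A_K\to B_K$ consider its graph $\Gamma_g\subseteq(A\times_kB)_K$. Applying the $q$-power relative Frobenius makes the equations cutting out $\Gamma_g$ acquire coefficients in $k$, because $\lambda\mapsto\lambda^q$ carries $K$ into $k$; hence $\Gamma_g^{(q)}$, a closed subgroup scheme of $(A\times_kB)_K^{(q)}\cong(A\times_kB)_K$, is the pullback of a closed subgroup scheme defined over $k$, and --- still projecting isomorphically onto $A$ --- it is the graph of a $k$-homomorphism. From this twist together with the Frobenius isogenies $F_{A/k}$, $F_{B/k}$ (defined over $k$) one recovers a $k$-homomorphism $g_0\colon A\to B$, using that $F_{A/k}$ is an isogeny and $g^{(q)}\circ F_{A_K/K}=F_{B_K/K}\circ g$, and one checks $(g_0)_K=g$. (Equivalently, this is Frobenius descent of $\Gamma_g$ along the universal homeomorphism $(A\times_kB)_K\to A\times_kB$.)

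For the regular case, write $K=k(V)$ with $V$ a geometrically integral affine $k$-variety and spread $g$ out to a homomorphism $g_U\colon A_U\to B_U$ of abelian schemes over a dense open $U\subseteq V$; then $U$ is geometrically connected. Base-changing to $\overline k$, the variety $U_{\overline k}$ is connected and, by the Nullstellensatz, has an $\overline k$-point $u_0$; put $h:=(g_{U,\overline k})_{u_0}\in\Hom_{\overline k}(A_{\overline k},B_{\overline k})$. The $U_{\overline k}$-scheme $\underline{\Hom}_{U_{\overline k}}(A_{U_{\overline k}},B_{U_{\overline k}})$ is separated and unramified, so its diagonal is a clopen immersion; the locus where the sections $g_{U,\overline k}$ and $h\times\mathrm{id}$ agree is therefore open and closed in $U_{\overline k}$, and contains $u_0$, so $g_{U,\overline k}=h\times\mathrm{id}_{U_{\overline k}}$. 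Passing to the generic point, $g$ and $h$ become equal over the function field of $U_{\overline k}$, hence already over $Kk^{\mathrm{sep}}=K\otimes_kk^{\mathrm{sep}}$ (a field, since $K/k$ is primary), where $h$ is now viewed through $\Hom_{\overline k}(A_{\overline k},B_{\overline k})=\Hom_{k^{\mathrm{sep}}}(A_{k^{\mathrm{sep}}},B_{k^{\mathrm{sep}}})$ (the purely inseparable case applied to $\overline k/k^{\mathrm{sep}}$). Finally $\Gal(k^{\mathrm{sep}}/k)$ acts on $\Hom_{k^{\mathrm{sep}}}(A_{k^{\mathrm{sep}}},B_{k^{\mathrm{sep}}})$ and, because $K/k$ is primary, is identified with $\Gal(Kk^{\mathrm{sep}}/K)$; applying $\sigma\in\Gal(k^{\mathrm{sep}}/k)$ to the equality $g=h$ over $Kk^{\mathrm{sep}}$ and using that $g$ is defined over $K$ yields $\sigma h=h$ for all $\sigma$, so $h$ descends by Galois descent to $h_0\in\Hom_k(A,B)$, and $(h_0)_K=g$ by injectivity of $\Hom_K\hookrightarrow\Hom_{Kk^{\mathrm{sep}}}$.

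The main obstacle lies in the regular case, in the two facts that do the real work: that the Hom scheme of abelian schemes is separated and unramified over the base --- so that agreeing with a prescribed fibre is a clopen condition on a connected base, forcing $g$ to be constant over $\overline k$ --- and that the resulting constant value is fixed by $\Gal(k^{\mathrm{sep}}/k)$. The latter is exactly where the hypothesis that $K/k$ is primary (equivalently $K\otimes_kk^{\mathrm{sep}}$ is a field, i.e.\ $K$ and $k^{\mathrm{sep}}$ are linearly disjoint over $k$) enters, and it is the heart of the argument.
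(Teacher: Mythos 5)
Your argument is correct in substance, but it is a genuinely different route from the paper: the paper offers no proof of Lemma~\ref{67} at all, simply citing \cite[Lemma 1.2.1.2]{chai-conrad-oort}, whereas you give a self-contained proof along the classical lines of Chow's rigidity theorem. Your decomposition --- limit argument to a finitely generated primary extension, then the chain $\Hom_k\hookrightarrow\Hom_K\hookrightarrow\Hom_{Kk^{\mathrm{pf}}}$ compared with the bijections for the regular extension $Kk^{\mathrm{pf}}/k^{\mathrm{pf}}$ and the purely inseparable extension $k^{\mathrm{pf}}/k$ --- is sound (the composite being a bijection and each arrow injective does force all arrows to be bijective), and the two base cases are handled by the standard tools: Frobenius twisting for purely inseparable extensions, and spreading out plus unramifiedness/rigidity of the Hom functor plus Galois descent for regular extensions. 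What the citation buys the paper is brevity and a reference in which the unramifiedness of $\underline{\Hom}$ and the descent formalism are carefully established; what your proof buys is transparency about exactly where primarity enters (linear disjointness of $K$ and $k^{\mathrm{sep}}$ over $k$, so that the constant value $h$ is Galois-invariant), which is indeed the heart of the matter.

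Two spots need patching, though neither affects the architecture. First, in the purely inseparable case the identification $(A\times_kB)_K^{(q)}\cong(A\times_kB)_K$ is not correct: the $q$-power twist of $(A\times_kB)_K$ relative to $K$ is $(A^{(q)}\times_kB^{(q)})_K$, where $A^{(q)},B^{(q)}$ are the twists over $k$; what your coefficient argument actually shows is that $\Gamma_g^{(q)}\subseteq(A^{(q)}\times_kB^{(q)})_K$ descends to $k$, i.e.\ $g^{(q)}=h_K$ for some $h\in\Hom_k(A^{(q)},B^{(q)})$. Second, "one recovers $g_0$" deserves a sentence: choose an isogeny $\psi:B^{(q)}\to B$ over $k$ with $\psi\circ F^n_{B/k}=[q]_B$ (possible since $\ker F^n_{B/k}\subseteq B[q]$); then $g_1:=\psi\circ h\circ F^n_{A/k}\in\Hom_k(A,B)$ satisfies $(g_1)_K=[q]\circ g$, so $g_1$ kills $A[q]$ (check after the faithfully flat base change to $K$), hence $g_1=g_0\circ[q]_A$ for a unique $g_0\in\Hom_k(A,B)$, and $(g_0)_K=g$ because $[q]_{A_K}$ is an epimorphism. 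With these repairs the proof is complete; the remaining inputs (representability or at least unramifiedness/separatedness of $\underline{\Hom}$ for abelian schemes, or equivalently the rigidity corollary that a homomorphism of abelian schemes over a connected base vanishing on one fibre vanishes identically) are standard and correctly invoked.
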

\begin{proof}
  See \cite[Lemma 1.2.1.2]{chai-conrad-oort}.\qed 
\end{proof}

We are ready to prove Theorem~\ref{65}. We show that 
the isogeny $\iota:A_0\to \calO'\otimes_\calO A_0$ satisfies the
property of Proposition~\ref{66}. Put $A'_0:=\calO'\otimes_\calO A_0$.
By Proposition~\ref{64}, the Tate module
$T_\ell(A'_0)$ of  $A_0'$ in $V_\ell=T_\ell(A_0)\otimes \Q_\ell$
through the isogeny $\iota$ is equal to $T_\ell'$ (in the proof of
Proposition~\ref{66}), and its \dieu
module $\bfM(A_0'\otimes k^{\rm pf})$ of the
abelian variety $A_0'\otimes k^{\rm pf}$
in $M_0 \otimes_W B(k^{\rm  pf})$ through $\iota$ is equal to
$M'$ (in the proof of
Proposition~\ref{66}), where $M_0=\bfM(A_0)$. 
Therefore, there is an isomorphism $A_0'\otimes_k k^{\rm pf}\simeq
A'\otimes_{k'} k^{\rm pf}$ which transforms $\iota$ to $\varphi$. So
the isogeny $\iota$ satisfies the property of Proposition~\ref{66}. 

Now by Lemma~\ref{67}, the morphism $\alpha$ in Proposition~\ref{66},
which is a priori defined over some finite purely inseparable
extension of $k$, is defined over $k$. The proof of Theorem~\ref{65}
is complete. 

% We need to show that both the
% abelian variety $A'$ and the isogeny $\varphi$ in Proposition~\ref{66} 
% are defined over $k$. Then by Lemma~\ref{67}, the morphisms $\varphi_1$
% and $\alpha$, a priori defined over , are defined over $k$. Then
% Theorem~\ref{65} would 
% follow.

% We have $\calO'_\ell=\End_{G_\ell}(T'_\ell)$, and hence
% $\End_k(A)=\calO$.  

% Question: can we show that $A'$ and $\varphi$ can be defined over
% $k'$? \\

% Idea: Consider the sheaf of abelian groups $\calO'\otimes_\calO A$ in
% fppf topology. The question is whether this is representable by a
% variety over $k$.  

\begin{thank}
  The author learned a lot from the books 
  \cite{matsumura:ca80} and \cite{reiner:mo} while working on both maximal
  orders and Nagata rings and preparing the present Note. 
  He is grateful to Matsumura and Reiner for
  their tremendous efforts on writing the great books. He is also
  grateful to Tse-Chung Yang, and especially to C.-L.~Chai for very helpful
  discussions. Main part of
  the manuscript was prepared while the author's stay at the
  Institute of Mathematical Sciences, Hong Kong Chinese University. He
  thanks the institute for kind hospitality and good working
  conditions. The research was partially supported by 
  the grants NSC 97-2115-M-001-015-MY3 and AS-99-CDA-M01. Finally, he
  is grateful to the referee for his/her kind suggestion of using
  Serre's construction which improves Theorem~\ref{ab}. 
\end{thank}

\end{document}